\documentclass[11pt]{amsart}

\usepackage[english]{babel}
\usepackage[english=american]{csquotes}

\usepackage{amsmath, amssymb, amsthm}
\newtheorem{theorem}{Theorem}[section]
\newtheorem{proposition}[theorem]{Proposition}
\newtheorem{lemma}[theorem]{Lemma}
\newtheorem{corollary}[theorem]{Corollary}
\newtheorem{fact}[theorem]{Fact}
\theoremstyle{definition}
\newtheorem{definition}[theorem]{Definition}
\theoremstyle{remark}

\newtheorem{case}{Case}

\usepackage{enumitem}
\setlist[enumerate]{label=(\roman*)}
\setlist[itemize]{leftmargin=1cm}
\usepackage{csquotes}

\usepackage[dvipsnames]{xcolor}
\usepackage{lipsum}
\usepackage[colorinlistoftodos]{todonotes}

\usepackage{graphicx}
\usepackage{tikz, pgfplots}
\pgfplotsset{compat=1.18}
\usetikzlibrary{arrows.meta}

\usepackage{hyperref}

\DeclareMathOperator{\Mann}{Mann}
\DeclareMathOperator{\Carmichael}{Carmichael}
\DeclareMathOperator{\Inequ}{Inequ}
\DeclareMathOperator{\BInequ}{BInequ}
\DeclareMathOperator{\Th}{Th}

\DeclareMathOperator{\Sub}{Sub}

\DeclareMathOperator{\tp}{tp}
\DeclareMathOperator{\fr}{frac}

\renewcommand{\div}{^\mathrm{div}}
\newcommand{\id}{\mathrm{id}}

\author{Philipp Hieronymi}
\address{Mathematical Institute\\ University of Bonn\\
Endenicher Allee 60\\ 53115 Bonn\\ Germany}
\email{hieronymi@math.uni-bonn.de}

\author{Michael Reitmeir}
\address{Mathematical Institute\\ University of Bonn\\
Endenicher Allee 60\\ 53115 Bonn\\ Germany}
\email{reitmeir@math.uni-bonn.de}

\author{Xiaoduo Wang}
\address{Department of Mathematics\\
University of Manchester\\
Oxford Road\\
Manchester\\
M13 9PL\\ UK}
\email{xiaoduo.wang@manchester.ac.uk}

\title[Axiomatizations of Presburger Arithmetic With Powers]{Axiomatizations of Presburger Arithmetic With Predicates For Powers}

\begin{document}

\maketitle

\begin{abstract}
  We give a complete first-order axiomatization of the structure \((\mathbb{Z},+,(\ell^{\mathbb{N}})_{\ell\in L})\), where \(L \subseteq \mathbb{Z}_{\ge 2}\) is a set of pairwise multiplicatively independent integers and \(\ell^{\mathbb{N}} = \{\ell^n : n\in \mathbb{N}\}\). Using recent work of Karimov et al.\ \cite{Karimov}, we obtain that this axiomatization is computable for \(|L|=2\), which proves that \((\mathbb{Z},+,k^{\mathbb{N}}, \ell^{\mathbb{N}})\) is decidable for \linebreak[2]\(k\), \(\ell\in \mathbb{Z}_{\ge 2}\). Furthermore, we give an axiomatization of the universal theory of \((\mathbb{Z},+,<,(\ell^{\mathbb{N}})_{\ell\in L})\).
\end{abstract}

\section{Introduction}

In 1929, Presburger \cite{Presburger} gave a complete and computable axiomatization of the first-order theory of the structure \((\mathbb{Z},+,<)\), which proves its decidability. Indeed, this theory, nowadays referred to as \emph{Presburger arithmetic}, is completely described by the axioms of a discretely ordered abelian group as well as \emph{congruence axioms} which express that out of \(n\) consecutive integers, exactly one is divisible by \(n\). Here we focus on expansions of Presburger arithmetic. This has been an active topic of research to this day, and we refer the reader to B\`es \cite{Bes-Survey} for an extensive survey of this area. Here, we are interested in expansions by predicates for sets of powers \(\ell^{\mathbb{N}} = \{\ell^n : n \in \mathbb{N}\}\) for \(\ell \in \mathbb{Z}_{\ge 2}\). It follows from Büchi's theorem \cite{Buchi} on the decidability of the weak monadic second-order theory of $(\mathbb{N},+1)$ that the first-order theory of \((\mathbb{Z},+,<,\ell^{\mathbb{N}})\) and even the first-order theory of the more expressive structure \((\mathbb{Z},+,<,V_{\ell})\) are decidable for every \(\ell \in \mathbb{Z}_{\ge 2}\), where \(V_{\ell} \colon \mathbb{Z} \to \mathbb{Z}\) maps \(0\) to \(1\) and \(n \ne 0\) to the largest power of \(\ell\) that divides \(n\). See Bruy\`ere et al. \cite{BHMV, BHMV-2} for a proof.

Progress on expansions of Presburger arithmetic by two or more sets of powers has been slower. Let \(k\), \(\ell \in \mathbb{Z}_{\ge 2}\) be multiplicatively independent. Here Büchi's theorem is not applicable. Indeed, the structure \((\mathbb{Z},+,<,V_k, V_{\ell})\) defines multiplication by Villemaire \cite{Villemaire} and thus its first-order theory is undecidable. This result is strengthened in B\`es \cite{Bes}, where it is shown that already \((\mathbb{Z},+,<,V_k, \ell^{\mathbb{N}})\) defines multiplication. The undecidability of the first-order theory of \((\mathbb{Z},+,<, k^{\mathbb{N}}, \ell^{\mathbb{N}})\) has only been established recently in Hieronymi and Schulz \cite{HieronymiSchulz}, partly because this structure does not define multiplication by Schulz \cite{Schulz}. Here we show decidability of the first-order theory when we drop the order relation.

Another way to analyze expansions of Presburger arithmetic is in terms of model-theoretic tameness. The structure \((\mathbb{Z},+,<,\ell^{\mathbb{N}})\) is NIP for every \(\ell \in \mathbb{Z}_{\ge 2}\), see Lambotte and Point \cite{LambottePoint}. Similarly to decidability, adding more than one predicate for powers breaks this tameness, as \((\mathbb{Z},+,<,k^{\mathbb{N}},\ell^{\mathbb{N}})\) has IP by \cite{HieronymiSchulz}. However, the situation is different if one omits the order relation: By Conant \cite{Conant}, the structure \((\mathbb{Z},+,(\ell^{\mathbb{N}})_{\ell \in L})\) is superstable of \(U\)-rank \(\omega\) for any finite subset \(L \subseteq \mathbb{Z}_{\ge 2}\), and the expansion \((\mathbb{Z},+,(\ell^{\mathbb{N}})_{\ell \ge 2})\) by powers of all positive integers is stable.

Our contribution to this picture is as follows: Let \(L \subseteq \mathbb{Z}_{\ge 2}\) be a set of pairwise multiplicatively independent integers. We give an axiomatization of \((\mathbb{Z},+,-,0,1,(\ell^{\mathbb{N}})_{\ell \in L})\) and, using the recent decidability results of Karimov, Luca, Nieuwveld, Ouaknine and Worrell \cite{Karimov}, we show that this set of axioms is computable for \(|L| = 2\). Thus, the first-order theory of \((\mathbb{Z},+,k^{\mathbb{N}},\ell^{\mathbb{N}})\) is decidable. Moreover, we prove a relative quantifier elimination for \((\mathbb{Z},+,(\ell^{\mathbb{N}})_{\ell \in L})\), which yields superstability of \((\mathbb{Z},+,(\ell^{\mathbb{N}})_{\ell\ge 2})\).

If we add the order relation back in, undecidability of the first-order theory implies that we cannot hope for a computable axiomatization of \((\mathbb{Z},+,<,(\ell^{\mathbb{N}})_{\ell \in L})\) for \(|L| \ge 2\). However, for \(|L| = 2\), the structure is existentially decidable by \cite{Karimov}, and thus also universally decidable. Here we give an axiomatization of the universal theory of \((\mathbb{Z},+,-,0,1,<,(\ell^{\mathbb{N}})_{\ell \in L})\). In the case \(|L| = 2\) this axiomatization is computable.

Let us briefly sketch our axioms before defining them in detail in Section \ref{ch:axioms}. Perhaps the most intuitive property satisfied by powers of an integer \(\ell\) is 
\[1 \in \ell^{\mathbb{N}} \wedge \forall x \left( x \in \ell^{\mathbb{N}} \leftrightarrow \ell \cdot x \in \ell^{\mathbb{N}}\right).\]
In addition to this and the axioms of Presburger, we will need three more complex axiom schemata. In the spirit of quantifier elimination for Presburger arithmetic, they describe the equations, congruences and inequalities satisfied by powers. Let \(\ell\), \(\ell_1\), \dots, \(\ell_m \in L\).
\begin{itemize}
  \item Let \(a_1\), \dots, \(a_m\), \(b \in \mathbb{Z}\). It is a known number-theoretic fact that a linear equation like \(a_1x_1 + \dots + a_mx_m = b\) has only finitely many non-degenerate solutions \((x_1, \dots, x_n) \in \ell_1^{\mathbb{N}} \times \dots \times \ell_m^{\mathbb{N}}\), that is, solutions such that \(\sum_{i \in I} a_ix_i \ne 0\) for all non-empty \(I \subseteq \{1, \dots, n\}\). The \emph{Mann axioms} hard-code these finitely many solutions.
  \item For \(m \in \mathbb{N}\), every element of \(\ell^{\mathbb{N}}\) except for \(\ell^0\), \(\ell^1\), \dots, \(\ell^{m-1}\) is divisible by \(\ell^m\). On the other hand, if we look at the congruences of \(\ell^{\mathbb{N}}\) modulo some \(n \in \mathbb{N}\) which is coprime to \(\ell\), then the sequence \((\ell^m)_{m \in \mathbb{N}}\) is periodic and does not take certain values (e.g.,\ the value \(0\)) modulo \(n\). The \emph{Carmichael axioms} capture these phenomena.
  \item In \((\mathbb{Z},+,<,(\ell^{\mathbb{N}})_{\ell \in L})\), the \emph{inequality axioms} hard-code all the systems of strict homogeneous linear inequalities which do not have a solution in \(\ell_1^{\mathbb{N}} \times \dots \times \ell_m^{\mathbb{N}}\). If \(|L| = 2\), then, following the algorithm for deciding the solvability of such inequalities in \cite{Karimov}, we show that a small subset of these axioms suffices. These \emph{basic inequality axioms} state that for every \(x \in \ell^{\mathbb{N}}\), we have \(x > 0\), and there is no \(y \in \ell^{\mathbb{N}}\) such that \(x < y < \ell \cdot x\).
\end{itemize}
In Section \ref{ch:prelims}, we will fix the languages and notation used for the remainder of this article. Afterwards in Section \ref{ch:axioms}, we define the three aforementioned axiom schemata precisely, discuss how they are used, and show that they are computable for \(|L|=2\). In Section \ref{ch:small}, we prove that the Mann axioms imply that sets of powers are \emph{small} in a certain technical sense. This is then applied in Section \ref{ch:orderless}, where we define the theory \(T(L)\) and prove that it axiomatizes the first-order theory of \((\mathbb{Z},+,(\ell^{\mathbb{N}})_{\ell \in L})\). We do this by giving a back-and-forth system, which we also use to prove our relative quantifier elimination. Finally, in Section \ref{ch:ordered}, we define the theory \(T_\forall(L)\) and prove that it axiomatizes the universal theory of \((\mathbb{Z},+,<,(\ell^\mathbb{N})_{\ell \in L})\).

\subsection*{Acknowledgements} The authors were supported by the Deutsche \linebreak Forschungsgemeinschaft (DFG, German Research Foundation) under Germany's Excellence Strategy - GZ 2047/1, Projekt-ID 390685813. The authors thank the Hausdorff Research Institute
for Mathematics for its hospitality during the trimester program ``Definability, decidability, and computability''. In the early stages of this research P.H. was partially supported by NSF grant DMS-1654725. Preliminary versions of the results presented here appear in the Master's theses of M.R. and X.W. The authors thank Toghrul Karimov for explaining the algorithms from \cite{Karimov} to them.
\section{Notation and Preliminaries}
\label{ch:prelims}
Let \(\mathbb{N} = \{0,1,2, \dots\}\). Throughout, definable means definable with parameters. All theories from now are first-order theories.

For \(L \subseteq \mathbb{Z}_{\ge 2}\), we define \(\mathcal{L} := \{+,-,0,1\} \cup \{U_{\ell} : \ell \in L\}\) to be the language of additive groups with the constant symbol \(1\) and a unary relation symbol \(U_{\ell}\) for each \(\ell \in L\), whose standard interpretation will be \(\ell^{\mathbb{N}}\). Furthermore, let \(\mathcal{L}_{<} := \mathcal{L} \cup \{<\}\).

We introduce the following abbreviations for convenience and clarity, without adding new symbols to our language. Every \(\mathcal{L}\)-structure \(\mathcal{M}\) that we consider will be a torsion-free abelian group, meaning we can and will identify the subgroup generated by \(1^{\mathcal{M}}\) with \(\mathbb{Z}\). For \(n\in \mathbb{Z}\), let \(n\cdot x\) abbreviate \(x + \dots + x\) with \(n\) summands for \(n\ge 0\), and \(-x - \dots -x\) with \(-n\) summands for \(n < 0\). We simply write \(n\) instead of \(n\cdot 1\). For \(n\in \mathbb{Z}\), let \(D_n\) be the unary relation symbol interpreted as \(D_n(x)\) if and only if \(\exists y (n\cdot y = x)\). To avoid clutter, we will often write an \(\mathcal{L}\)-structure (resp.\ an \(\mathcal{L}_{<}\)-structure) \(\mathcal{M}\) as \((\mathcal{M}, (A_{\ell})_{\ell\in L})\) instead of \((M,+,-,0,1,(A_{\ell})_{\ell\in L})\) (resp.\ \((M,+,-,0,1,<,(A_{\ell})_{\ell\in L})\)). If \((\mathcal{M}, (A_{\ell})_{\ell\in L})\) is an \(\mathcal{L}\)-structure, we define \(A := \bigcup_{\ell\in L} A_{\ell} \). Finally, if \(x = (x_1, \dots, x_n)\), then we define \(x > 0\) to mean \(x_i > 0\) for all \(i\in \{1, \dots, n\}\).

\begin{definition}
  Let \(G\) be a torsion-free abelian group and let \(H \le G\) be a subgroup. We say \(H\) is \emph{divisibly closed} (also called \emph{pure}) in \(G\) if for all \(n\in \mathbb{N}\) and \(g\in H\), the existence of \(x\in G\) with \(n\cdot x = g\) implies the existence of \(y\in H\) with \(n\cdot y = g\). We define \(H\div\) to be the smallest subgroup of \(G\) that is divisibly closed in \(G\) and contains \(H\).
\end{definition}

\begin{definition}
  Let \(\mathcal{M} = (M,+,-,0,1)\) be a group and let \(g\in M\). We call the partial type
  \begin{align*}
    d_{\mathcal{M}}(g) := \{D_{p^{m}}(x-u) :&\ p\text{ prime}, m>0, 0\le u < p^{m}, \mathcal{M} \models D_{p^m}(g-k)\}
  \end{align*}
  the \emph{congruence type of \(g\) in \(\mathcal{M}\)}. If \((g_i)_{i\in I}\) is a family of elements of \(M\), then we define its congruence type as
  \begin{align*}
    d_{\mathcal{M}}((g_{i})_{i\in I}) := \{D_{p^{m}}(x_i-u) :&\ i\in I, p\text{ prime}, m>0, 0\le u < p^{m},\\ &\ \mathcal{M} \models D_{p^m}(g_i-u)\}.
  \end{align*}
\end{definition}
Note that it will be important that \(d_{\mathcal{M}}(g)\) only contains \emph{positive} formulas of the form \(D_{p^m}(x-k)\), not negative formulas of the form \(\neg D_{p^m}(x-k)\). This makes no difference if \(\mathcal{M}\) satisfies the congruence axioms of Presburger arithmetic. In this case, if \(g\), \(h\in M\) such that \(h\) satisfies \(d_{\mathcal{M}}(g)\) (or in other words, \({d_{\mathcal{M}}(g) \subseteq d_{\mathcal{M}}(h)}\)), then \(d_{\mathcal{M}}(g) = d_{\mathcal{M}}(h)\). However, if \(\mathcal{M}\) is only a subgroup of a model of Presburger arithmetic (which is the case we will consider in Section \ref{ch:ordered}), then we might have \(g\), \(h\in M\) such that \(d_{\mathcal{M}}(g)\) is a proper subset of \(d_{\mathcal{M}}(h)\).

Throughout, we usually assume that the elements of \(L \subseteq \mathbb{Z}_{\ge 2}\) are pairwise multiplicatively independent. This assumption is more for convenience than for a lack of generality, as demonstrated by the following lemma.

\begin{lemma} \label{thm:dependence}
  Let \(k\), \(\ell \in \mathbb{Z}_{\ge 2}\) be multiplicatively dependent. Then \(\ell^{\mathbb{N}}\) is definable in \((\mathbb{Z},+,k^{\mathbb{N}})\).
\end{lemma}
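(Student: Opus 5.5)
Since \(k\) and \(\ell\) are multiplicatively dependent, there are \(a, b \in \mathbb{Z}_{\ge 1}\) with \(k^a = \ell^b\). Let \(d = \gcd(a,b)\). Then \(k = m^{b/d}\) and \(\ell = m^{a/d}\) for some \(m \in \mathbb{Z}_{\ge 2}\), by unique factorization: every prime exponent of \(k\) times \(a/d\) equals the corresponding exponent of \(\ell\) times \(b/d\), and \(a/d\) and \(b/d\) are coprime. Write \(r = b/d\) and \(s = a/d\), so \(k = m^r\) and \(\ell = m^s\). The plan is to define \(m^{\mathbb{N}}\) first and then cut out \(\ell^{\mathbb{N}} = m^{s\mathbb{N}}\) from it, using only \(+\) and the predicate for \(k^{\mathbb{N}}\).

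First, \(m^{\mathbb{N}} = \bigcup_{j=0}^{r-1} m^j \cdot k^{\mathbb{N}}\), since every \(n \in \mathbb{N}\) can be written as \(qr + j\) with \(0 \le j < r\). Each set \(m^j k^{\mathbb{N}} = \{x : \exists y\,(y \in k^{\mathbb{N}} \wedge x = m^j \cdot y)\}\) is definable, because multiplication by the fixed integer \(m^j\) is iterated addition. A finite disjunction then defines \(m^{\mathbb{N}}\).

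Second, inside \(m^{\mathbb{N}}\) we want those \(m^n\) with \(s \mid n\). For this, let \(t = \operatorname{lcm}(r,s)\) and use \(m^{t\mathbb{N}} = (m^t)^{\mathbb{N}} = (k^{t/r})^{\mathbb{N}}\). The set \(\{k^{(t/r)q} : q \in \mathbb{N}\}\) is the hardest piece, since it picks out every \((t/r)\)-th element of \(k^{\mathbb{N}}\) and a naive definition would need order or exponentiation. I would handle it with a congruence trick without order. Choose a modulus \(N\) coprime to \(k\) such that the multiplicative order of \(k\) modulo \(N\) is a multiple of \(t/r\). For instance, take \(N\) to be a prime with \(p \nmid k\) for which \(k\) has order divisible by \(t/r\); such primes exist by Zsigmondy's theorem applied to \(k^{t/r}-1\), or more simply one can take \(N = k^{t/r} - 1\), for which the order of \(k\) is exactly \(t/r\). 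Then for \(x = k^n \in k^{\mathbb{N}}\), we have \((t/r) \mid n\) if and only if \(x \equiv 1 \pmod N\), i.e.\ \(D_N(x-1)\). Note that \(k^n \equiv 1 \pmod{k^{c}-1}\) iff \(c \mid n\), which holds since \(k^n \bmod (k^c-1) = k^{n \bmod c}\) and \(1 \le k^{n \bmod c} < k^c - 1\) unless \(n \bmod c = 0\); the edge case \(k^c - 1 = 1\) does not arise, since \(k \ge 2\) and \(c \ge 1\) give \(k^c - 1 = 1\) only when \(k=2, c=1\), where the claim is trivial. This step is the main obstacle, and the congruence \(D_{k^c-1}(x-1)\) resolves it. So \(m^{t\mathbb{N}}\) is definable.

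Finally, \(\ell^{\mathbb{N}} = m^{s\mathbb{N}} = \bigcup_{i=0}^{t/s-1} m^{si} \cdot m^{t\mathbb{N}}\), which is again a finite union of scalings by fixed integers of a definable set, hence definable. In fact this last step makes the intermediate definition of \(m^{\mathbb{N}}\) unnecessary, but it serves as a sanity check. All formulas involved use only \(+\), constants, the predicate for \(k^{\mathbb{N}}\), and divisibility predicates \(D_N\), which are existential \(\{+\}\)-formulas.
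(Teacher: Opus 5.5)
Your proof is correct and is essentially the paper's argument: cut out $(k^c)^{\mathbb{N}}$ inside $k^{\mathbb{N}}$ by the congruence $D_{k^c-1}(x-1)$, then recover $\ell^{\mathbb{N}}$ from this common subset as a finite union of scalings by $\ell^i$. The paper simply skips your detour through the common base $m$ by using the relation $k^a=\ell^b$ directly. One small slip: the order of $k$ modulo $N$ must be exactly $t/r$, not merely a multiple of it, so your Zsigmondy aside as phrased would not work. Your actual choice $N=k^{t/r}-1$ does have order exactly $t/r$, and you verify this.
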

\begin{proof}
  By assumption we have \(k^m = \ell^n\) for some \(m\), \(n \in \mathbb{Z}_{\ge 1}\). Notice that \(1\) is definable in \((\mathbb{Z},+,k^{\mathbb{N}})\) by the formula \(U_k(x) \wedge \neg D_k(x)\). Using this, we can define \((k^m)^{\mathbb{N}}\) in \((\mathbb{Z},+,k^{\mathbb{N}})\) by 
  \[U_k(x) \wedge D_{k^m-1}(x-1).\]
  Finally, \(\ell^{\mathbb{N}}\) is definable in \((\mathbb{Z},+,(\ell^n)^{\mathbb{N}})\) by
  \[\exists y \left( U_{\ell^n}(y) \wedge \bigvee_{t=0}^{n-1} \ell^t\cdot x = y \right).\vspace{-1em}\]
\end{proof}

\section{Axioms}
\label{ch:axioms}

In this section, we lay out the axiom schemata that we use to axiomatize the first-order theory of \((\mathbb{Z},+,-,0,1,(\ell^{\mathbb{N}})_{\ell\in L})\) in Section \ref{ch:orderless} and the universal theory of \((\mathbb{Z},+,-,0,1,<,(\ell^{\mathbb{N}})_{\ell\in L})\) in Section \ref{ch:ordered}. 

\subsection{Mann Axioms}
\begin{definition}[Mann property] \label{def:mann-property}
  Let \(K\) be a field and let \(G\) be a subset of \(K^{\times }\). For \(a_1\), \dots, \(a_n\), \(b\in \mathbb{Z}\), consider the equation
  \begin{equation}
  \label{eq:mann}
    a_1x_1 + \dots + a_nx_n = b.
  \end{equation}
  \begin{sloppypar}
  A \emph{solution in \(G\)} of this equation is a tuple \((g_1, \dots, g_n)\in G^n\) with \(a_1g_1 + \dots + a_ng_n = b\). A solution \((g_1, \dots, g_n)\) is called \emph{non-degenerate} if \(\sum_{j\in J} a_jg_j \ne 0\) for every non-empty \(J \subseteq \{1, \dots, n\}\). We say that \(G\) has the \emph{Mann property} if for every \(n \ge 2\) and \(a_1, \dots, a_n, b\in \mathbb{Z}\), the equation \eqref{eq:mann} has only finitely many non-degenerate solutions in \(G\).
  \end{sloppypar}
\end{definition}

The Mann property was first introduced in van den Dries and Günaydin \cite{vdDGunaydin}. We have slightly rephrased it to be more suitable for our work over \(\mathbb{Z}\). In particular, we also define it for subsets \(G \subseteq K^{\times}\), where in \cite{vdDGunaydin} only \emph{subgroups} \(G \le K^{\times }\) were considered. The Mann property is named after Henry B.\ Mann, who proved that the group of roots of unity in \(\mathbb{C}\) has this property \cite{Mann}.

It is known that every finite rank subgroup of the multiplicative group of a field of characteristic \(0\) has the Mann property; see van der Poorten and Schlickewei \cite{PoortenSchlickewei}, Evertse \cite{Evertse} and Laurent \cite{Laurent}. (Indeed, by Evertse, Schlickewei and Schmidt \cite{EvertseSchlickeweiSchmidt}, there is even an explicit upper bound on the number of non-degenerate solutions depending only on the number of variables and the rank of the group.) This is what we make use of in our axiomatizations.

Let \(L\) be a subset of \(\mathbb{Z}_{\ge 2}\). The idea behind the Mann axioms is to hard-code the finitely many solutions of \eqref{eq:mann} in \(\bigcup_{\ell\in L} \ell^{\mathbb{Z}}\). However, since we cannot express division in our language, we need to add another variable to the right-hand side and focus on equations of the form
\begin{equation}
  a_1x_1 + \dots + a_nx_n = by \label{eq:mann-hom}
\end{equation}
instead, where we are interested in solutions in \(\bigcup_{\ell\in L} \ell^{\mathbb{N}}\). These equations are homogeneous and may thus have infinitely many solutions: For example, if \(s_1\), \dots, \(s_n\), \(t\in \ell^{\mathbb{N}}\) satisfy \(a_1s_1 + \dots + a_ns_n = bt\), then so do \(\ell^ks_1\), \dots, \(\ell^ks_n\), \(\ell^kt\) for any \(k\in \mathbb{N}\). The Mann property, however, guarantees that these potentially infinitely many integral solutions \mbox{\(s_1\), \dots, \(s_n\), \(t\)} correspond to only finitely many rational solutions \(\frac{s_1}{t}\), \dots, \(\frac{s_n}{t}\). We can even do a little better if the elements of \(L\) are pairwise multiplicatively independent:

\begin{lemma} \label{thm:still-fin-many-sol}
  Let \(a_1, \dots, a_n, b\in \mathbb{Z}\) with \(b \ne 0\), and let \(L \subseteq \mathbb{Z}_{\ge 2}\) be a set of pairwise multiplicatively independent numbers. If \(\ell_1\), \dots, \(\ell_n\), \(\ell_{n+1}\in L\) are not all equal, then there are only finitely many non-degenerate tuples \((s_1, \dots, s_n, t)\) such that
  \(a_1s_1 + \dots + a_ns_n = bt\)
  and \(s_1\in \ell_1^{\mathbb{N}}\), \dots, \(s_n\in \ell_n^{\mathbb{N}}\), \(t \in \ell_{n+1}^{\mathbb{N}}\).
\end{lemma}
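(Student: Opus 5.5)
The plan is to reduce to the Mann property for a finitely generated subgroup of \(\mathbb{Q}^{\times}\) by dividing through by \(t\), and then to use multiplicative independence to recover \(t\) from the finitely many resulting ratios.

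First, let \(\Gamma \le \mathbb{Q}^{\times}\) be the subgroup generated by \(\ell_1, \dots, \ell_{n+1}\). It is finitely generated, so it has finite rank, and by the results of van der Poorten–Schlickewei, Evertse and Laurent cited above it has the Mann property. Given a non-degenerate tuple \((s_1,\dots,s_n,t)\) with \(a_1s_1+\dots+a_ns_n = bt\), set \(q_i := s_i/t \in \Gamma\). Then \((q_1,\dots,q_n)\) solves \(a_1x_1+\dots+a_nx_n = b\) in \(\Gamma\). Non-degeneracy of the original tuple gives \(\sum_{j\in J} a_js_j \ne 0\) for every non-empty \(J\subseteq\{1,\dots,n\}\). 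Dividing by \(t\neq 0\), the tuple \((q_1,\dots,q_n)\) is a non-degenerate solution in \(\Gamma\). Hence, by the Mann property, only finitely many tuples \((q_1,\dots,q_n)\) arise. (If \(n=1\), there is trivially at most one value \(q_1=b/a_1\); the case \(n\ge 2\) is exactly the Mann property.)

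Second, I show that each fixed ratio tuple comes from at most one \((s_1,\dots,s_n,t)\). Since \(\ell_1,\dots,\ell_{n+1}\) are not all equal, there is some \(i\le n\) with \(\ell_i\ne\ell_{n+1}\). By the pairwise hypothesis on \(L\), these two are multiplicatively independent. Non-degeneracy applied to \(J=\{i\}\) gives \(a_is_i\neq 0\), so \(q_i\ne 0\). Suppose two solutions have the same \(q_i\), say \(\ell_i^{u}/\ell_{n+1}^{v} = \ell_i^{u'}/\ell_{n+1}^{v'}\). Then \(\ell_i^{u-u'} = \ell_{n+1}^{v-v'}\), and multiplicative independence forces \(u=u'\) and \(v=v'\). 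Thus \(t=\ell_{n+1}^{v}\) is determined by \(q_i\), and then every \(s_j = q_jt\) is determined as well. Combining the two steps, there are only finitely many tuples.

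The only delicate point is bookkeeping rather than mathematics. One must check that the non-degeneracy notion for the homogeneous equation \eqref{eq:mann-hom} restricts to non-degeneracy of the dehomogenized equation \eqref{eq:mann}. One must also check that the index \(i\) with \(\ell_i\ne\ell_{n+1}\) can be chosen among \(1,\dots,n\), which is immediate: if \(\ell_1=\dots=\ell_n\), this common value differs from \(\ell_{n+1}\) by the hypothesis. The real depth lies entirely in the Mann property for finite-rank groups, which we invoke as a black box.
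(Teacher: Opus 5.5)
Your proof is correct and follows essentially the same route as the paper. Both divide by \(t\), apply the Mann property to get finitely many non-degenerate rational solutions, and then use the multiplicative independence of some \(\ell_i \ne \ell_{n+1}\) to show that distinct integral solutions give distinct rational ones. Your separate treatment of \(n=1\), where the Mann property as defined does not literally apply, is a small bit of extra care that the paper leaves implicit.
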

\begin{proof}
  As above, we refer to
  \begin{itemize}
    \item \((s_1, \dots, s_n, t)\) with \(a_1s_1 + \dots + a_ns_n = bt\) and \(s_1\in \ell_1^{\mathbb{N}}\), \dots, \(s_n\in \ell_n^{\mathbb{N}}\), \(t \in \ell_{n+1}^{\mathbb{N}}\) as \emph{integral solutions}, and
    \item to \((r_1, \dots, r_n)\) with \(a_1r_1 + \dots + a_nr_n = b\) and \(r_i \in \{\ell_i^{j}/\ell_{n+1}^k : j,k\in \mathbb{N}\}\) for \(i\in \{1, \dots, n\}\) as \emph{rational solutions}.
  \end{itemize}
  The Mann property tells us that there are only finitely many non-degenerate rational solutions. It thus suffices to show that two different integral solutions lead to different rational solutions.

  Let \((s_1, \dots, s_n, t)\) and \((s_1', \dots, s_{n}', t')\) be integral solutions with
  \[ \left(\frac{s_1}{t}, \dots, \frac{s_n}{t} \right) = \left(\frac{s_1'}{t'} , \dots, \frac{s_n'}{t'} \right).\]
  By assumption there is \(j\in \{1, \dots, n\}\) such that \(\ell_j \ne \ell_{n+1}\), so \(\ell_j\) and \(\ell_{n+1}\) are multiplicatively independent. Therefore, \(s_j/t = s'_j/t'\) implies that \(s_j = s'_j\) and \(t = t'\). The latter implies \(s_i = s_i'\) for all \(i\in \{1, \dots, n\}\).
\end{proof}

For the Mann axioms, we thus distinguish two cases:
\begin{itemize}
  \item If \(\ell_1\), \dots, \(\ell_n\), \(\ell_{n+1}\in L\) are not all the same, then we hard-code the finitely many non-degenerate integral solutions.
  \item If \(\ell_1 = \dots = \ell_n = \ell_{n+1}\), then we hard-code the finitely many non-degenerate rational solutions.
\end{itemize}
To do the latter in a canonical way, we say that \((s_1, \dots, s_n, t) \in (\ell^{\mathbb{N}})^{n+1}\) is \emph{primitive} if at least one of the fractions \(s_i/t\) for \(i\in \{1, \dots, n\}\) is reduced, so in other words, if \(s_i = 1\) for some \(i\in \{1, \dots, n\}\) or \(t = 1\).

\begin{definition}[Mann axioms]\label{def:mann-axioms}
  Let \(L \subseteq \mathbb{Z}_{\ge 2}\) be a set of pairwise multiplicatively independent numbers, and let \(n\ge 1\), \(a_1\), \dots, \(a_n\), \(b\in \mathbb{Z}\setminus \{0\}\) and \(\ell_1\), \dots, \(\ell_n\), \(\ell_{n+1}\in L\). We abbreviate \(\overline{a} = (a_1, \dots, a_n)\), \(\overline{\ell} = (\ell_1, \dots, \ell_{n+1})\) and \(x = (x_1, \dots, x_{n+1})\), the latter being a tuple of variables. Let \(\theta_{\overline{a},b,\overline{\ell}}(x)\) be the formula
  \[\bigwedge_{i=1}^{n+1} U_{\ell_i}(x_i) \wedge \sum_{i=1}^n a_ix_i = bx_{n+1} \wedge \bigwedge_J \sum_{j\in J} a_jx_j\ne 0,\]
  where the conjunction is taken over all non-empty \(J \subseteq \{1, \dots, n\}\).
  \begin{itemize}
    \item Suppose \(\ell_1\), \dots, \(\ell_n\), \(\ell_{n+1}\) are not all equal, and let
      \[(s_{11}, \dots, s_{1n},t_1), \dots, (s_{m1}, \dots, s_{mn}, t_m)\]
    be the finitely many non-degenerate solutions of \(a_1x_1 + \dots + a_nx_n = by\) with
    \((s_{i1}, \dots, s_{in},t_i)\in \ell_1^{\mathbb{N}} \times \dots \times \ell_{n+1}^{\mathbb{N}}\)
    for all \(i\in \{1, \dots, m\}\). Then we define \(\Mann(\overline{a},b, \overline{\ell})\) as the sentence
      \[\forall x \left[ \theta_{\overline{a},b,\overline{\ell}}(x) \rightarrow \bigvee_{i=1}^m \Biggl( \Biggl(\bigwedge_{j=1}^n x_j = s_{ij}\Biggr) \wedge x_{n+1} = t_i  \Biggr)\right].\]
    \item Suppose \(\ell := \ell_1 = \dots = \ell_{n+1}\), and let
      \[(s_{11}, \dots, s_{1n},t_1), \dots, (s_{m1}, \dots, s_{mn}, t_m)\]
      be the finitely many non-degenerate primitive solutions of the equation \(a_1x_1 + \dots + a_nx_n = by\) with
      \((s_{i1}, \dots, s_{in},t_i)\in (\ell^{\mathbb{N}})^{n+1}\) for all \(i\in \{1, \dots, m\}\). In this case we define \(\Mann(\overline{a},b, \overline{\ell})\) to be the sentence
      \[\forall x \left[ \theta_{\overline{a},b,\overline{\ell}}(x) \rightarrow \bigvee_{i=1}^m \bigwedge_{j=1}^n t_{i} x_j = s_{ij}x_{n+1} \right].\]
  \end{itemize}
  We call \(\Mann(\overline{a},b,\overline{\ell})\) the \emph{Mann axiom} corresponding to \(\overline{a}\), \(b\), \(\overline{\ell}\), and define \(\Mann(L)\) to be the axiom schema consisting of \(\Mann(\overline{a},b,\overline{\ell})\) for all \(n\ge 1\), \(\overline{a}\in (\mathbb{Z} \setminus \{0\})^n\), \(b\in \mathbb{Z} \setminus \{0\}\), \(\overline{\ell}\in L^{n+1}\).
\end{definition}

Before moving on, we want to quickly sketch one way the Mann axioms will be used.
Suppose \((\mathcal{M},(A_{\ell})_{\ell\in L}) \models \Mann(L)\). Take \(a_1\), \dots, \(a_n\), \(b\in \mathbb{Z}\) and \(g_1\), \dots, \(g_n \in A \setminus \mathbb{Z}\). Then \(a_1g_1 + \dots + a_n g_n = b\) can only hold if \(b=0\). Indeed, if \(b \ne 0\), then we can remove summands from the left-hand side until we obtain a non-degenerate solution. Now if there is no \(\ell\in L\) with \(g_i\in A_{\ell}\) for all \(i\in\{1, \dots, n\}\), the Mann axioms tell us that \((g_1, \dots, g_n, 1)\) is one of our finitely many hard-coded solutions. But these are all integers, which contradicts our assumption that \(g_1\), \dots, \(g_n \notin \mathbb{Z}\). So we have that \linebreak[2]\(g_1\), \dots, \(g_n \in A_{\ell}\) for some \(\ell\in L\), in which case the Mann axioms tell us that for each \(i\in\{1, \dots, n\}\), there are \(s\), \(t\in \ell^{\mathbb{N}}\) such that \(t\cdot g_i = s\cdot 1\). This yields \(g_i \in \mathbb{Z}\), which is again a contradiction.\footnote{Note that this argument implicitly used the additional assumptions that \(\mathcal{M}\) is a torsion-free abelian group, that \(1\in A_{\ell}\) for all \(\ell\in L\), and that any \(s\in \mathbb{Z}\) that is not divisible by \(t\in \mathbb{Z}_{\ge 2}\) in \(\mathbb{Z}\) is also not divisible by \(t\) in \(\mathcal{M}\) (i.e., \(\mathcal{M} \cap \mathbb{Q} = \mathbb{Z}\)). These assumptions will always be satisfied in the structures we consider.}

Using \cite{Karimov}, we now show that the set \(\Mann(L)\) can be effectively computed for \(L = \{k,\ell\}\), where \(k\), \(\ell\in \mathbb{Z}_{\ge 2}\) are multiplicatively independent.

\begin{definition}[{\cite[Definition 7]{Karimov}}] \label{def:structure-of-solutions}
  A set \(X \subseteq \mathbb{N}^n\) belongs to the class \(\mathfrak{U}\) if it can be written in the form
  \[X = \bigcup_{i\in I} \bigcap_{j \in J_i} X_j\]
  where \(I\) and \(J_i\) for every \(i\in I\) are finite, and each \(X_j\) is either of the form
  \begin{equation}
  \label{eq:coupled}
    X_j = \left\{(e_1, \dots, e_n)\in \mathbb{N}^n : e_{\mu(j)} = e_{\sigma(j)} + c_j\right\}
  \end{equation}
  or of the form
  \begin{equation}
  \label{eq:fixed}
    X_j = \left\{ (e_1, \dots, e_n)\in \mathbb{N}^n : e_{\xi(j)}=b_{j}\right\}
  \end{equation}
  where \(\xi(j), \mu(j), \sigma(j) \in \{1, \dots, n\}\) and \(b_j, c_j\in \mathbb{N}\).
\end{definition}

\begin{lemma}[{\cite[Theorem~8]{Karimov}}]
  \label{thm:diophantine-decidability}
  Let \(k\), \(\ell \in \mathbb{\mathbb{Z}}_{\ge 2}\) be multiplicatively independent integers, let \(z_1\), \dots, \(z_n\in \{k, \ell\}\), and let \(C \in \mathbb{Z}^{s \times n}\), \(d \in \mathbb{Z}^s\). Let \(\mathcal{S}\) be the set of solutions \((e_1, \dots, e_n)\in \mathbb{N}^n\) of the equation \(Cz = d\), where \(z = (z_1^{e_1}, \dots, z_n^{e_n})\).
  Then \(\mathcal{S}\in \mathfrak{U}\), and a representation \(\mathcal{S} = \bigcup_{i\in I} \bigcap_{j \in J_i} X_j\) as in Definition \ref{def:structure-of-solutions} can be effectively computed, with the additional property that \(z_{\mu(j)} = z_{\sigma(j)}\) for every \(X_j\) of the form \eqref{eq:coupled}.
\end{lemma}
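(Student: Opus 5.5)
This result is \cite[Theorem~8]{Karimov}, and in our argument we would simply import it. If I had to reprove it, I would proceed as follows. First I would reduce to a single equation. The class \(\mathfrak{U}\) is closed under finite intersections, by distributing intersections over unions. The solution set of \(Cz=d\) is the intersection of the solution sets of its rows, and the intersection of effective representations is effective. So it suffices to treat one equation \(c_1z_1^{e_1}+\dots+c_nz_n^{e_n}=d\).

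Second, I would decompose by degeneracy patterns. Every solution determines a partition of \(\{1,\dots,n\}\) (after discarding indices with \(c_i=0\)) into blocks \(P_0,P_1,\dots,P_r\). On each \(P_i\) with \(i\ge 1\) the subsum vanishes and no proper nonempty subsum vanishes. On \(P_0\) the subsum equals \(d\) and is non-degenerate; if \(d=0\), then \(P_0=\emptyset\). There are finitely many partitions, so \(\mathcal{S}\) is the union over partitions of the intersection of the corresponding block-wise solution sets. The non-degeneracy conditions can be dropped, because every solution lies in the solution set of some partition and every element of such a set is a solution. It then remains to describe two kinds of sets effectively:

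\begin{enumerate}
\item the non-degenerate solutions of an inhomogeneous equation with \(d\ne 0\);
\item the minimal vanishing sums \(\sum_{i\in P}c_iz_i^{e_i}=0\).
\end{enumerate}

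For (i), the Mann property gives finiteness, and a finite set is a finite union of intersections of sets of the form \eqref{eq:fixed}. For (ii), write the sum as \(K+L'=0\), where \(K\) collects the \(k\)-terms and \(L'\) the \(l\)-terms. If only one base occurs, dividing by the smallest power yields an inhomogeneous non-degenerate equation in the exponent differences. Hence the differences \(e_\mu-e_\sigma\) range over a finite set, which gives coupled constraints \eqref{eq:coupled} with \(z_\mu=z_\sigma\), as required. If both bases occur, I would divide by the minimal \(k\)-power \(k^{a}\) and the minimal \(l\)-power \(l^{b}\). This writes the equation as \(k^{a}\alpha=l^{b}\beta\), where \(\alpha\) and \(\beta\) are integers built from the shifted exponents. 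The Mann-type finiteness (Lemma~\ref{thm:still-fin-many-sol}, applied after dividing by one term) then pins down the relative exponents within each base up to finitely many possibilities. Multiplicative independence of \(k\) and \(l\) then forces \(a\) and \(b\) (and hence everything) into a finite set. So mixed minimal vanishing sums contribute only fixed constraints.

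The main obstacle is effectivity. The Mann property, via the subspace theorem, is ineffective, so step (i) and the finiteness claims in (ii) must be made effective with a different tool. My first attempt would be an induction on \(n\) using Baker's theory of linear forms in logarithms, specifically effective lower bounds for \(|c\,k^{x}-c'\,l^{y}|\). Order the terms by size. In a non-degenerate solution the largest term must be nearly cancelled by the others, or else it is bounded. Comparing the largest \(k\)-power with the largest \(l\)-power, Baker's bound shows their exponents can differ only by \(O(\log)\) of the gap to the next term. This yields an effective gap principle: either all exponents are effectively bounded, or the top terms split off as a vanishing (single-base) subsum, contradicting non-degeneracy. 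Iterating this gives explicit bounds on all exponents, so the finite sets in (i) and (ii) can be enumerated by search. Assembling the pieces over all partitions yields the computable representation, with the property \(z_{\mu(j)}=z_{\sigma(j)}\) built in from the single-base analysis.
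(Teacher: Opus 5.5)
The paper does not prove this lemma. It imports it verbatim from \cite[Theorem~8]{Karimov}, exactly as your first sentence proposes, so at that level you agree with the paper.

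Your reconstruction is a different, self-contained route, and its structural half is sound:
\begin{enumerate}
\item reduce to a single row;
\item split each solution into a non-degenerate block with sum $d$ and minimal vanishing blocks;
\item replace each block set by anything sandwiched between its non-degenerate solutions and all of its solutions.
\end{enumerate}
Single-base minimal vanishing blocks then give exactly the coupled constraints with $z_{\mu(j)}=z_{\sigma(j)}$, and mixed blocks are finite by Lemma~\ref{thm:still-fin-many-sol}. What this buys over the bare citation is an explanation of where the additional property comes from: coupled constraints arise only inside single-base vanishing blocks. That property is the one feature of the representation the paper actually uses, in Lemma~\ref{thm:nondegenerate-decidability}.

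The effectivity half is too compressed, and it is the real content of the cited theorem, since finiteness alone is classical via the subspace theorem. Two points need to be supplied.

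\emph{Iterating Baker's bound.} Two-term Baker bounds for $|c\,k^x-c'\,\ell^y|$ with fixed $c,c'$ do not iterate. Once the top cluster contains several terms of each base, you must rewrite it as $A\,k^x+B\,\ell^y$ with $A,B\ne 0$ (by non-degeneracy). The sizes of $A$ and $B$ grow with the gaps already established. The induction closes only because Baker's lower bound depends mildly (linearly, in Baker--W\"ustholz) on the height of $A/B$. This gives exponent gaps between consecutive clusters bounded by increasing powers of $\log E$, where $E$ is the largest exponent. The final comparison with $d\ne 0$ then bounds $E$ by a power of $\log E$, hence effectively. Also, the alternative to a nonzero top cluster is a vanishing subsum of either kind, not only a single-base one.

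\emph{Mixed minimal vanishing blocks.} Here the gap principle only bounds the ratios between the terms, never their size, because there is no nonzero constant to compare against. So ``iterating gives explicit bounds on all exponents'' fails in this case, and your appeal to Lemma~\ref{thm:still-fin-many-sol} is ineffective. An effective replacement is as follows. Write the block as $k^a\alpha=-\ell^b\beta$ with $|\alpha|,|\beta|\le H$. Compare absolute values and $p$-adic valuations at a prime $p$ with $v_p(\ell)\log k\ne v_p(k)\log\ell$; such a $p$ exists by multiplicative independence. This gives $a,b=O(\log H)$, and together with the ratio bound it yields an effective bound on all exponents.
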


To show that the Mann axioms \(\Mann(\{k, \ell\})\) are computable, the only remaining task is to effectively filter out the degenerate solutions from \(\mathcal{S}\). As we will see in the proof of the following lemma, the representation from Definition \ref{def:structure-of-solutions} is already quite convenient for that: every \(\bigcap_{j\in J_i} X_j\) contains either only degenerate solutions or precisely one (primitive) non-degenerate solution.

\begin{lemma} \label{thm:nondegenerate-decidability}
  Let \(k\), \(\ell \in \mathbb{Z}_{\ge 2}\) be multiplicatively independent, let \(z_1\), \dots, \(z_n\), \(z_{n+1}\in \{k, \ell\}\) and let \(a_1,\) \dots, \(a_n\), \(b \in \mathbb{Z} \setminus \{0\}\).
  \begin{enumerate}
    \item If \(z_1\), \dots, \(z_{n+1}\) are not all equal, then the finitely many non\hskip0pt-\hskip0pt degenerate solutions of \(a_1x_1 + \dots + a_nx_n = by\) in \(z_1^{\mathbb{N}} \times \dots \times  z_{n+1}^{\mathbb{N}}\) can be effectively computed. \label{item:integral}
    \item If \(z := z_1 = \dots = z_{n+1}\), then the finitely many non-degenerate primitive solutions of \(a_1x_1 + \dots + a_nx_n = by\) in \((z^{\mathbb{N}})^{n+1}\) can be effectively computed.\label{item:rational}
  \end{enumerate}
\end{lemma}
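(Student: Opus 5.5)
We apply Lemma \ref{thm:diophantine-decidability} with \(s=1\), the single row \(C = (a_1, \dots, a_n, -b)\), right-hand side \(d = 0\), and \(z = (z_1, \dots, z_{n+1})\). This yields an effectively computed representation \(\mathcal{S} = \bigcup_{i\in I}\bigcap_{j\in J_i} X_j\) of the exponent vectors of all solutions, degenerate or not. Each piece \(P_i := \bigcap_{j\in J_i} X_j\) is determined by finitely many constraints of two kinds. The \emph{fixed} constraints have the form \(e_{\xi} = b_j\). The \emph{coupled} constraints have the form \(e_\mu = e_\sigma + c_j\) with \(z_\mu = z_\sigma\). First, we decide emptiness of each \(P_i\). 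This is a Presburger-definable condition, or more concretely a shortest-path / difference-constraint problem, so it can be decided effectively. Empty pieces are discarded.

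Next, we analyze a nonempty piece \(P_i\). Form the graph on \(\{1,\dots,n+1\}\) whose edges are the coupled constraints. Within each connected component, all exponents are determined by any one of them up to fixed offsets. A component either contains a fixed coordinate, in which case all its exponents are constant, or it is free, in which case it is parametrised by one variable \(e\) ranging over a computable cofinite-or-finite set of naturals. All indices in a component carry the same base \(z\), because coupled constraints only join indices with \(z_\mu=z_\sigma\).

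The key claim is that if \(P_i\) contains a non-degenerate solution, then it has at most one free component, and that component contains \(n+1\).

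To see this, let \(K\) be a free component and suppose first that \(n+1 \notin K\). Compare two members of \(P_i\) that differ only in the parameter of \(K\). Their difference shows that \(\sum_{j\in K} a_j z^{e_j}\) is the same for two distinct values of \(e\). Writing this sum as \(z^{e}\cdot\sum_{j\in K} a_j z^{c_j}\), the coefficient \(\sum_{j\in K} a_j z^{c_j}\) must be \(0\). But then the sum over \(K\) vanishes, so every solution in \(P_i\) is degenerate.

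Now suppose there are two free components \(K_1\) and \(K_2\), with \(n+1\in K_1\). Then \(K_2\) is a free component not containing \(n+1\), so the previous argument applies to it and again every solution in \(P_i\) is degenerate.

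It follows that each relevant piece \(P_i\) consists of at most one solution up to simultaneous scaling. If no component is free, \(P_i\) is finite. If the only free component contains \(n+1\) and all coordinates lie in it, we are in case (ii). There \(P_i\) is one ray \(\{z^{e}\cdot v\}\), and its primitive member is obtained by taking the least \(e\) for which some coordinate equals \(1\). In case (i) the bases are not all equal, so a free component containing \(n+1\) leaves some other coordinate fixed. The line then has a varying \(t\) and a fixed \(s_j\) with \(a_js_j\) bounded, so the pieces are again finite; this also follows from Lemma \ref{thm:still-fin-many-sol}.

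Finally, the algorithm runs as follows. For each nonempty piece, compute the finitely many candidates: the elements of finite pieces, and the primitive representative of each ray. Test each candidate for non-degeneracy by checking the \(2^n-1\) subsums \(\sum_{j\in J} a_j s_j \ne 0\). Output the candidates that pass.

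The main obstacle I expect is making sure that infinite pieces which do contain non-degenerate solutions genuinely reduce to finitely many candidates. This is exactly the scaling argument above. I would double-check it by a direct argument rather than relying on the Mann property, because the Mann property gives finiteness but is not effective.
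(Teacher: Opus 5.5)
\textbf{Gap: free components that contain \(n+1\) but are proper.} Your overall plan matches the paper's. You decompose via Lemma~\ref{thm:diophantine-decidability}, discard infinite pieces by a shift-invariance argument, and check the surviving single points and single rays directly. The case analysis breaks, however, exactly when a free component contains \(n+1\) without being all of \(\{1,\dots,n+1\}\).
\begin{itemize}
\item In case (i) you assert that such a piece is ``again finite''. It is not: the parameter of a free component ranges over all sufficiently large \(e\), since the only remaining constraints are non-negativity of the exponents. So the piece is an infinite subset of \(\mathcal{S}\).
\item In case (ii) you do not treat this subcase at all, i.e.\ a free component containing \(n+1\) alongside some fixed coordinates. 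Your conclusion ``at most one solution up to simultaneous scaling'' does not follow from your key claim here, because only the coordinates in the component scale.
\item Citing Lemma~\ref{thm:still-fin-many-sol} does not make the algorithm effective. It only says that the infinite piece has finitely many non-degenerate members, not which ones, so you have nothing concrete to output.
\end{itemize}

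\textbf{The fix.} Your scaling argument works for every free component \(K \ne \{1,\dots,n+1\}\), not only for those avoiding \(n+1\). Put \(a_{n+1} := -b\). Then \(\sum_{j\in K} a_j z^{e_j} = -\sum_{j\notin K} a_j z_j^{e_j}\). The right-hand side is unchanged when you vary only the parameter of \(K\). Hence \(z^{e}\sum_{j\in K} a_j z^{c_j}\) takes the same value for two distinct \(e\), so it vanishes, and therefore \(\sum_{j\notin K} a_j z_j^{e_j} = 0\) as well. One of \(K\) and \(\{1,\dots,n+1\}\setminus K\) is a non-empty subset of \(\{1,\dots,n\}\), so every solution in the piece is degenerate. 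This is precisely the paper's step for blocks with \(\emptyset \ne I_m \ne \{1,\dots,n+1\}\).

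With it, the only pieces that survive are single points and full rays. A full ray forces all bases to be equal, which is case (ii). Your final enumerate-and-test algorithm is then correct without any appeal to the Mann property.
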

\begin{proof}
  Let \(\mathcal{S}\) be the set of solutions \((e_1, \dots, e_{n+1}) \in \mathbb{N}^{n+1}\) of
  \[a_1 z_1^{e_1} + \dots + a_nz_n^{e_n} = b z^{e_{n+1}}.\]
  By Lemma \ref{thm:diophantine-decidability}, we can effectively compute \(\mathcal{S} = \bigcup_{i\in I} \bigcap_{j\in J_i} X_j \in \mathfrak{U}\) as in Definition \ref{def:structure-of-solutions}. Since \(I\) is finite, it suffices to show that for each \(i\in I\), the non-degenerate solutions coming from \(\bigcap_{j\in J_i} X_j\) can be effectively computed. For simplicity, we omit the index \(i\) and write \(X := \bigcap_{j\in J} X_j\). We may assume \(X \ne \emptyset \). For ease of notation, we set \(a_{n+1} := -b\).

  Rewrite the representation from Definition \ref{def:structure-of-solutions} as follows: partition the set \(\{1, \dots, n+1\}\) into disjoint subsets \(I_1, \dots, I_d, K\) and pick \(c_1\), \dots, \(c_{n+1} \in \mathbb{N}\) such that the tuples \((e_1, \dots, e_{n+1}) \in X\) are exactly the ones that satisfy
  \begin{align*}
    e_i + c_i &= e_j + c_j && \text{ for all } m \in \{1, \dots, d\}, i,j \in I_m, \text{ and }\\
    e_i &= c_i && \text{ for all } i \in K.
  \end{align*}
  Note that this new representation of \(X\) can be effectively computed from \((X_j)_{j\in J}\).

  Suppose there is an \(m \in \{1, \dots,d \}\) such that \(\emptyset \ne I_m \ne \{1, \dots, n+1\}\). Define \(e_i' := e_i + 1\) for \(i \in I_m\) and \(e_i' := e_i\) otherwise. Since \[e_i'+c_i = e_i+1+c_i = e_j+1+c_j = e_j'+c_j\] for all \(i\), \(j \in I_m\), we also have \((e_1', \dots, e_{n+1}') \in X\). This implies that
  \[\sum_{i\in I_m} a_i z_i^{e_i} = - \sum_{i \notin I_m} a_i z_i^{e_i} = - \sum_{i \notin I_m} a_i z_i^{e_i'} = \sum_{i \in I_m} a_i z_i^{e_i'} = \sum_{i \in I_m} a_i z_i^{e_i + 1}.\]
  From the additional property given by Lemma~\ref{thm:diophantine-decidability}, we know that there is \(z\in \{k, \ell\}\) such that \(z = z_i \) for all \(i \in I_m\). Thus
  \[0 = \sum_{i\in I_m} a_i z^{e_i+1} - \sum_{i\in I_m} a_i z^{e_i} = (z-1) \sum_{i\in I_m} a_i z^{e_i}.\]
  Since \(z \ne 1\), we obtain that in this case, every solution in \(X\) is degenerate.

  The remaining cases are that \(I_m = \{1, \dots, n+1\}\) for some \(m\), or that \({K = \{1, \dots, n+1\}}\). In the latter case, \(X = \{(c_1, \dots, c_{n+1})\}\), and we can easily check whether this single solution is degenerate. So suppose that \(I_m = \{1, \dots, n+1\}\). The additional property in Lemma~\ref{thm:diophantine-decidability} again tells us that there is \(z\in \{k, \ell\}\) such that \(z = z_i \) for all \(i \in \{1, \dots, n+1\}\), meaning we are in the situation of \ref{item:rational}. Now for every \((e_1, \dots, e_{n+1})\), \((e_1', \dots, e_{n+1}') \in X\), we have \[e_i - e_{n+1} = c_{n+1}-c_i = e_i' - e_{n+1}'\] for all \(i\in \{1, \dots, n\}\). This means all \((e_1, \dots, e_{n+1}) \in X\) yield the same rational solution
  \[\left( z^{c_{n+1}-c_1}, \dots , z^{c_{n+1}-c_n}\right).\]
  The corresponding primitive integral solution is
  \[ \left( z^{c_{n+1}-c_1 + c}, \dots, z^{c_{n+1}-c_n +c}, z^c\right), \]
  where \(c := -\min(c_{n+1}-c_1, \dots, c_{n+1}-c_n, 0)\).
  Again, we can easily check whether this single solution is degenerate.
\end{proof}

\subsection{Carmichael Axioms}

In \cite{Carmichael}, Carmichael defined \(\lambda \colon \mathbb{Z}_{>0} \to \mathbb{Z}_{>0}\), nowadays called the \emph{Carmichael function}, as follows: For a prime \(p\), define
\[\lambda(p^v) := \begin{cases}
  2^{v-2} & \text{ if } p=2, v\ge 3\\
  p^{v-1}(p-1) & \text{ else},
\end{cases}.\]
Now for general \(n\in \mathbb{Z}_{>0}\), define \[\lambda(n) := \lambda(p_1^{v_1}) \cdots \lambda(p_m^{v_m}),\] where \(n = p_1^{v_1} \cdots p_m^{v_m}\) is the prime factorization of \(n\). The essential property of \(\lambda(n)\) is that it is the smallest positive integer such that \(\ell^{\lambda(n)} \equiv 1 \pmod{n}\) holds for every \(\ell\in \mathbb{Z}\) coprime to \(n\). This means that for \(\ell\) and \(n\) coprime, it suffices to compute \(\ell^1\), \(\ell^2\), \dots, \(\ell^{\lambda(n)} \pmod{n}\) to know all the possible remainders of powers of \(\ell\) modulo \(n\).

\begin{definition}[Carmichael axioms] \label{def:carmichael-axioms}
  For \(L \subseteq \mathbb{Z}_{\ge 2}\), let \(\Carmichael(L)\) be the axiom schema containing for every \(\ell\in L\) the following axioms:
  \begin{itemize}
    \item For \(m\in \mathbb{N}\), the axiom
      \begin{equation}\label{eq:car1}
        \forall x \left[U_{\ell}(x) \rightarrow \left( \Biggl(\bigvee_{j=0}^{m-1} x = \ell^j\Biggr) \vee \left(\bigwedge_{k=1}^{\ell^m-1} \neg D_{\ell^m}(x- k)\right) \right) \right].
      \end{equation}
    \item For \(n\in \mathbb{N}\) coprime to \(\ell\), add the axiom
      \begin{equation} \label{eq:car2}
        \forall x \left[U_\ell(x) \rightarrow \bigwedge_{k\in E} \neg D_{n}(x-k)\right]
      \end{equation}
      where \(E := \left\{ k\in \{1, \dots, n\} : \forall m\in \{1, \dots, \lambda(n)\}: \ell^m \not \equiv k \pmod{n}\right\}.\)
  \end{itemize}
\end{definition}
So in particular, the Carmichael axioms rule out congruence classes that no power of \(\ell\) lies in. Note that given the congruence axioms of Presburger arithmetic, the formula \eqref{eq:car1} is equivalent to
\[\forall x \left[U_{\ell}(x) \rightarrow \left( \Biggl(\bigvee_{j=0}^{m-1} x = \ell^j\Biggr) \vee D_{\ell^m}(x)\right) \right]\]
which says that \(\ell^0\), \dots, \(\ell^{m-1}\) are the only powers of \(\ell\) not divisible by \(\ell^m\). Furthermore, still modulo Presburger arithmetic, \eqref{eq:car2} is equivalent to
\[\forall x \left[ U_{\ell}(x) \rightarrow \bigvee_{m=1}^{\lambda(n)} D_n(x-\ell^m)\right].\]

The reason for defining the axioms using \(\bigwedge_k \neg D_n(x-k)\) instead of \(D_n(x)\) is that \( \neg D_n(x-k)\) is a universal formula, whereas \(D_n(x)\) contains an existential quantifier. This way we can also use \(\Carmichael(L)\) as part of our axiomatization of the universal theory of \((\mathbb{Z},+,-,0,1,<,(\ell^{\mathbb{N}})_{\ell\in L})\).

We remark that for computable \(L \subseteq \mathbb{Z}_{\ge 2}\), the set \(\Carmichael(L)\) is computable using the Carmichael function as explained in the first paragraph of this subsection.

The Carmichael axioms will be used in the following way:

\begin{lemma} \label{thm:carmichael-use}
  Let \(L \subseteq \mathbb{Z}_{\ge 2}\), and let \(\mathcal{M} = (M,+,-,0,1,(A_{\ell})_{\ell\in L})\) be a torsion-free abelian group with \(\mathcal{M} \models \Carmichael(L)\). Let \(\ell\in L\), \(a \in A_{\ell} \setminus \ell^{\mathbb{N}}\) and let \(\Delta \subseteq d_{\mathcal{M}}(a)\) be finite. Then there are infinitely many \(n\in \mathbb{N}\) such that \(\ell^n\) satisfies \(\Delta\). In particular, \(\Delta\) is equivalent to
  \[\{ D_{d_1}(x-\ell^n), \dots, D_{d_m}(x-\ell^n)\}\]
  for some prime powers \(d_1\), \dots, \(d_m\) and some \(n\in \mathbb{N}\).
\end{lemma}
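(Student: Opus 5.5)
We reduce \(\Delta\) to a single congruence and then apply the two Carmichael axioms. Since \(\Delta\) is finite, each formula in it has the form \(D_{p^m}(x-u)\) with \(\mathcal{M}\models D_{p^m}(a-u)\). Because \(\mathcal{M}\) is torsion-free and we may assume the relevant Presburger-style facts (in \(\mathcal{M}\), \(a\) lies in at most one residue class modulo each \(p^m\); otherwise \(D_{p^m}\) of a nonzero integer below \(p^m\) would hold, contradicting \(\mathcal{M}\cap\mathbb{Q}=\mathbb{Z}\)), we can combine the conditions by the Chinese remainder theorem. Write \(N=\prod_i p_i^{m_i}\) for the finitely many primes involved, with \(m_i\) the largest exponent appearing for \(p_i\). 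It suffices to find infinitely many \(n\) such that \(\ell^n\) satisfies each \(D_{p_i^{m_i}}(x-u_i)\), where \(u_i\) is the residue of \(a\). The smaller exponents for the same prime are implied by the largest one.

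Split \(N=N_1N_2\), where \(N_1\) collects the prime powers whose prime divides \(\ell\), and \(N_2\) is coprime to \(\ell\). First take a prime \(p\mid \ell\) with exponent \(m\). Choose \(M\) large enough that \(p^m \mid \ell^M\). Since \(a\in A_\ell\setminus\ell^{\mathbb{N}}\), the axiom \eqref{eq:car1} for \(M\) gives that \(a\ne \ell^j\) for \(j<M\), hence \(\neg D_{\ell^M}(a-k)\) for all \(1\le k<\ell^M\). Hence \(a\), having some residue modulo \(\ell^M\) in \(\mathcal{M}\) (here we would like divisibility to be witnessed; in the intended models \(a\) is congruent to something mod \(\ell^M\)), must satisfy \(D_{\ell^M}(a)\). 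Therefore \(u\equiv 0 \pmod{p^m}\), and every \(\ell^n\) with \(n\ge M\) satisfies these conditions.

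Next take the part coprime to \(\ell\). Apply \eqref{eq:car2} with modulus \(N_2\). Let \(u\) be the residue of \(a\) mod \(N_2\) given by CRT from the \(u_i\) (with \(\mathcal{M}\models D_{N_2}(a-u)\), obtained by combining the prime-power divisibilities in the torsion-free group). Then \(u\notin E\), since otherwise the axiom gives \(\neg D_{N_2}(a-u)\). So some \(\ell^{r}\equiv u \pmod{N_2}\) with \(1\le r\le\lambda(N_2)\). By periodicity, \(\ell^{r+t\lambda(N_2)}\equiv u\) for all \(t\). Choosing \(t\) with \(r+t\lambda(N_2)\ge M\) gives infinitely many \(n\) such that \(\ell^n\) satisfies all of \(\Delta\).

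For the ``in particular'', fix such an \(n\). Then \(\ell^n\equiv u_i\pmod{d_i}\) for each prime power \(d_i\) in \(\Delta\), so each \(D_{d_i}(x-u_i)\) is equivalent to \(D_{d_i}(x-\ell^n)\), since the differences are integers divisible by \(d_i\).

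The main obstacle is the bookkeeping about residues. In a bare torsion-free group satisfying only the Carmichael axioms, \(a\) need not satisfy any \(D_{\ell^M}\)-condition. I would handle this by working only with the positive information in \(\Delta\). For a condition \(D_{p^m}(x-u)\in\Delta\) with \(p\mid\ell\), choose \(M\) with \(p^m\mid \ell^M\); then \(u\ne 0 \bmod p^m\) would contradict \eqref{eq:car1} via \(\neg D_{\ell^M}(a-k)\) for a suitable lift \(k\)? That step in fact needs a residue of \(a\) mod \(\ell^M\). I expect this to be the delicate point, and I would first try to derive it using \eqref{eq:car1} at level \(m\) with \(\ell\) replaced by the appropriate power and the lifts \(k\equiv u\pmod{p^m}\).
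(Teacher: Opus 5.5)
Your route is the paper's route. You split \(\Delta\) by whether the prime divides \(\ell\), treat the coprime part with the Chinese remainder theorem, \eqref{eq:car2} and periodicity modulo \(\lambda(N_2)\), treat the \(\ell\)-part with \eqref{eq:car1}, and then push the exponent past \(M\). Your coprime part works in any torsion-free group, since combining coprime divisibilities only needs B\'ezout.

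The step you flag, however, is a genuine gap. It is exactly the step the paper's proof dispatches in one sentence (``\eqref{eq:car1} implies that \(\Delta_1\) contains only formulas of the form \(D_{p^m}(x)\)''). Axiom \eqref{eq:car1} only gives the negative information \(\neg D_{\ell^M}(a-k)\) for \(0<k<\ell^M\). To turn this into \(D_{\ell^M}(a)\), and hence \(u=0\), you need \(a\) to lie in \emph{some} residue class modulo \(\ell^M\). That is the existential congruence axiom \ref{ax:congruence}, which does not follow from the stated hypotheses.

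No choice of lifts or of powers of \(\ell\) gets around this, because the lemma is false as stated for composite \(\ell\). Take \(M=\mathbb{Z}^2\) with \(1=(1,0)\), \(\ell=4\), \(A_4=\{(4^j,0):j\in\mathbb{N}\}\cup\{(4^j,2\cdot 4^j):j\in\mathbb{N}\}\) and \(a=(1,2)\). Here \((u,v)\) is divisible by \(n\) iff \(n\mid u\) and \(n\mid v\), and the standard elements behave as in \(\mathbb{Z}\).
\begin{itemize}
\item For \(x=(4^j,2\cdot 4^j)\) and \(0<k<4^m\), \(D_{4^m}(x-k)\) would need \(4^m\mid 4^j-k\), false if \(j\ge m\), and \(4^m\mid 2\cdot 4^j\), false if \(j<m\). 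So \eqref{eq:car1} holds.
\item \eqref{eq:car2} holds because no odd \(n>1\) divides \(2\cdot 4^j\).
\item Yet \(a-1=2\cdot(0,1)\), so \(D_2(x-1)\in d_{\mathcal{M}}(a)\), and \(4^n\) satisfies it only for \(n=0\).
\end{itemize}
This \(\mathcal{M}\) even has unique residues and \(\mathcal{M}\cap\mathbb{Q}=\mathbb{Z}\). The same construction with \(a=(1,p)\) works for every composite \(\ell\) and prime \(p\mid\ell\).

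So an extra hypothesis is needed, and either of two suffices.
\begin{itemize}
\item If \(\mathcal{M}\) satisfies \ref{ax:congruence}, as every model of \(T(L)\) does, your argument is complete as written. This covers the use in Lemma~\ref{thm:partial-type-realizable}. Indeed \(a\) has a residue \(k\) modulo \(\ell^M\), and \eqref{eq:car1} excludes every \(k\) with \(0<k<\ell^M\), so \(D_{\ell^M}(a)\). Hence \(D_{p^m}(a)\), and uniqueness of residues modulo \(p^m\) gives \(u=0\).
\item If \(\ell\) is prime, \eqref{eq:car1} at level \(m\) reads \(\neg D_{p^m}(a-k)\) for \(0<k<p^m\), which gives \(u=0\) directly.
\end{itemize}

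Neither repair covers the later use in Proposition~\ref{thm:inequ-and-congruence}, where only \ref{ax:universal-congruence} is available. One checks that the example above, ordered lexicographically with the second coordinate dominant, is a model of \(T_\forall(\{4\})\). Yet it violates the universal sentence \(\forall x\,\forall y\,(U_4(x)\wedge x\ne 1\rightarrow 2\cdot y\ne x-1)\), which is true in \((\mathbb{Z},+,-,0,1,<,4^{\mathbb{N}})\). A natural fix is to add the axioms \(\forall x\,[U_\ell(x)\rightarrow \bigvee_{j<m}x=\ell^j\vee\bigwedge_{k=1}^{p^e-1}\neg D_{p^e}(x-k)]\) for every prime \(p\mid\ell\) and \(p^e\mid\ell^m\). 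These are universal and true in \(\mathbb{Z}\), and they make your flagged step go through verbatim.
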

\begin{proof}
  We partition \(\Delta\) into two subsets:
  \begin{align*}
    \Delta_1 &:= \left\{ D_{p^m}(x-k) : D_{p^m}(x-k)\in d_{\mathcal{M}}(a), p \mid \ell\right\}\\
    \Delta_2 &:= \left\{ D_{p^m}(x-k) : D_{p^m}(x-k)\in d_{\mathcal{M}}(a), p \nmid \ell\right\}
  \end{align*}
  Since \(a \in A_{\ell} \setminus \ell^{\mathbb{N}}\), the Carmichael axiom \eqref{eq:car1} implies that \(\Delta_1\) contains only formulas of the form \(D_{p^m}(x)\) for a prime \(p \mid \ell\) and \(m\in \mathbb{N}\). Pick \(\mu\in \mathbb{N}\) such that \(p^m \mid \ell^{\mu}\) for all \(D_{p^m}(x) \in \Delta_1\).

  By the Chinese remainder theorem, the conjunction of all formulas in \(\Delta_2\) is equivalent to \(D_d(x-k)\) for some \(d\), \(k\in \mathbb{N}\). By the Carmichael axiom \eqref{eq:car2}, there is some \(s \in \mathbb{N}\) such that \(\ell^s \equiv k \pmod{d}\). Now for every \(t\in \mathbb{N}\), we also have \(\ell^{s + \lambda(d)t} \equiv k \pmod{d}\), which means that \(\ell^{s+\lambda(d)t}\) satisfies \(\Delta_2\). Picking \(t\) large enough such that \(\lambda(d)t+n \ge \mu\), we obtain that \(\ell^{s + \lambda(d)t}\) also satisfies \(\Delta_1\), which finishes the proof.
\end{proof}

\subsection{Inequality Axioms}

Finally, we discuss the axioms for the order relation that we will need for the universal theory of \((\mathbb{Z},+,-,0,1,<,(\ell^{\mathbb{N}})_{\ell\in L})\). Naturally, we will use the axioms for a discretely ordered group. Moreover, we will need axioms that describe which inequalities have solutions\linebreak[2] in \((\ell^{\mathbb{N}})_{\ell\in L}\).

\begin{definition}[Inequality axioms] \label{def:inequality-axioms}
  For \(L \subseteq \mathbb{Z}_{\ge 2}\), let \(\Inequ(L)\) be the axiom schema consisting of the sentences
  \[\forall x \left[ \left(\bigwedge_{i=1}^n U_{\ell_i}(x_i)\right) \rightarrow \neg \left(Cx > 0 \wedge  \bigwedge_{i=1}^n D_d(x_i-k_i)\right)\right]\]
  for all \(n \ge 1\), \(\ell_1\), \dots, \(\ell_n\in L\), \(C \in \mathbb{Z}^{m \times n}\), \(d\in \mathbb{Z}_{\ge 2}\), and \(k_1\), \dots, \(k_n\in \{1, \dots, d\}\) such that there is no \(x = (x_1, \dots, x_n) \in \ell_1^{\mathbb{N}} \times \dots \times \ell_n^{\mathbb{N}}\) with \(Cx > 0\) and \(x_i \equiv k_i \pmod{d}\) for all \(i\in \{1, \dots, n\}\).
\end{definition}

The reason why we need to simultaneously axiomatize inequalities and congruences is that an inequality in powers can imply certain congruence conditions. For example, if \(x\), \(y \in 2^{\mathbb{N}}\) and \(x < y < 4x\), then either \(x \equiv 1\), \(y \equiv 2 \pmod{3}\) or \(x \equiv 2\), \(y \equiv 1 \pmod{3}\).
The use of these axioms will be that if \((\mathcal{M}, (A_{\ell})_{\ell\in L}) \models \Inequ(L)\), then for any \(\ell_1\), \dots, \(\ell_n\in L\), a system of linear homogeneous inequalities and congruences has a solution in \(A_{\ell_1} \times  \dots \times A_{\ell_n}\) if and only if it has one in \(\ell_1^{\mathbb{N}} \times \dots \times \ell_n^{\mathbb{N}}\).

In particular, the inequality axioms imply the following, much simpler properties:
\begin{definition}[Basic inequality axioms]\label{def:basic-ineq-axioms} 
  For \(L \subseteq \mathbb{Z}_{\ge 2}\), let \(\BInequ(L)\) be the axiom schema consisting of the sentences
  \[\forall x \forall y \biggl[ \Bigl(U_{\ell}(x) \wedge U_{\ell}(y)\Bigr) \rightarrow \Bigl(x>0 \wedge \neg (x < y < \ell \cdot x)\Bigr) \biggr] \]
  for each \(\ell \in L\).
\end{definition}

We remark that by \cite{Karimov}, the set \(\Inequ(\{k, \ell\})\) is computable for multiplicatively independent \(k\), \(\ell\in \mathbb{Z}_{\ge 2}\). In fact, we will show in Section \ref{sec:solving-inequalities} that \(\BInequ(\{k, \ell\})\) and \(\Carmichael(\{k, \ell\})\) together already imply \(\Inequ(\{k, \ell\})\) (modulo our other axioms). The proof will closely follow the aforementioned decision algorithm. We conjecture that something similar is true for \(|L|>2\).\footnote{This would likely require the additional assumption that \((\log(\ell)^{-1})_{\ell\in L}\) are \(\mathbb{Q}\)-linearly independent, as this is the necessary condition to apply Kronecker's Approximation Theorem.} 

\section{Smallness}
\label{ch:small}
One ingredient in the proof that our axiomatization is complete will be that sets of powers are \emph{small} in the following sense.
\begin{definition}[Smallness]
  Let \(\mathcal{L}\) be a language and \(\mathcal{M}\) be an \(\mathcal{L}\)-structure. Let \(X \subseteq M^m\) and \(Y \subseteq M\).
  \begin{itemize}
      \item For \(n \in \mathbb{N}\), we write \[f \colon X \overset{n}{\to} Y\] to say that \(f\) is a function from \(X\) to \(\mathcal{P}(Y)\) such that for each \(x\in X\), we have \(|f(x)| \le n\). We call \(f\) a \emph{multivalued function}.
      \item For any \(f \colon X \to \mathcal{P}(Y)\), we define the graph of \(f\) as \[G_{f} := \{(x,y)\in X \times Y : y\in f(x)\}\] and the image of a set \(S \subseteq X\) as
      \[f(S) := \bigcup_{x\in S} f(x).\]
      \item If \(X\) and \(Y\) are definable in \(\mathcal{M}\), then \(f\) is said to be \emph{definable} in \(\mathcal{M}\) if \(G_f\) is definable in \(\mathcal{M}\).
      \item Let \(S \subseteq M\). If \(f(S^{m})= M\) for some \(m\), \(n\in \mathbb{N}\) and some definable multivalued function \(f \colon M^m \overset{n}{\to} M\), we say \(S\) is \emph{large} in \(\mathcal{M}\). Otherwise, we say \(S\) is \mbox{\emph{small} in \(\mathcal{M}\).}
  \end{itemize}
\end{definition}

This notion of smallness was also first introduced in \cite{vdDGunaydin}. They show that if \(K\) is an algebraically closed field and \(G \le K^{\times }\) satisfies the Mann property, then \(G\) is small in \(K\) \cite[Lemma 6.1]{vdDGunaydin}. The main goal of this section is to prove the same statement for models of \(\Th(\mathbb{Z},+)\). First, we use quantifier elimination to find a more concrete condition for smallness.

\begin{lemma} \label{thm:large-qe}
  Let \(\mathcal{M}\) be a model of \(\Th(\mathbb{Z},+,-,0,1, (D_n)_{n\ge 2})\), and let \(S \subseteq M\) be large in \(\mathcal{M}\). Then \(M\) is a finite union of sets of the form
  \[\{y\in M : \exists x_1, \dots, x_m\in S \colon  a_1 x_1 + \dots + a_mx_m = by + c\},\]
  where \(m \ge 1\), \(a_1\), \dots, \(a_m\), \(b\in \mathbb{Z}\) with \(b\ne 0\), and \(c\in M\).
\end{lemma}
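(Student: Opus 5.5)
Since \(S\) is large, we fix \(m,n\in\mathbb{N}\) and a definable multivalued function \(f\colon M^m \overset{n}{\to} M\) with \(f(S^m)=M\). Let \(\varphi(x,y)\) be a formula with parameters defining \(G_f\), where \(x=(x_1,\dots,x_m)\). The plan is to use quantifier elimination for \(\Th(\mathbb{Z},+,-,0,1,(D_n)_{n\ge 2})\) to bring \(\varphi\) into a normal form. Then we separate the disjuncts in which \(y\) is tied to \(x\) by a genuine linear equation from those in which it is not. The latter disjuncts will turn out to contribute infinitely many values of \(y\) for a single \(x\), contradicting \(|f(x)|\le n\).

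\textbf{Normal form.} By quantifier elimination, \(\varphi(x,y)\) is equivalent to a disjunction of conjunctions of literals. Each literal is either an equation \(t(x)+by=c\), a negated equation, or a congruence \(D_k(t(x)+by-c)\) or its negation. Here \(t\) is a \(\mathbb{Z}\)-linear term and \(c\in M\) is a parameter. Negated congruences can be rewritten as finite disjunctions of positive congruences, using the congruence axioms. So each disjunct \(\psi\) is a conjunction of equations, inequations \(\ne\), and positive congruences.

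\textbf{Sorting the disjuncts.} Call a disjunct \emph{essential} if it contains an equation \(a\cdot x = by + c\) with \(b\ne 0\). Every other disjunct only contains equations not mentioning \(y\), together with inequations and congruences in \(y\). Fix such a non-essential \(\psi\) and some \(x_0\) for which \(\psi(x_0,y)\) has a solution \(y_0\). Then \(\psi(x_0,y)\) is also satisfied by \(y_0+Nk\) for all \(k\in\mathbb{Z}\), where \(N\) is the product of the moduli occurring in \(\psi\), except for the finitely many values of \(k\) excluded by the inequations. Hence \(\psi(x_0,M)\) is infinite, because \(M\) is torsion-free and \(N\ne 0\). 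This contradicts \(|f(x_0)|\le n\). Consequently non-essential disjuncts are never satisfied and can be dropped, so every \(y\in f(x)\) satisfies an essential disjunct.

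\textbf{Conclusion.} Now take any \(y\in M\). Since \(f(S^m)=M\), there is \(x\in S^m\) with \(\varphi(x,y)\), so some essential disjunct holds at \((x,y)\). In particular \(a_1x_1+\dots+a_mx_m=by+c\) with \(b\ne 0\) and \(x_i\in S\). This gives
\[
M=\bigcup_{\psi}\{y : \exists x\in S^m\ a\cdot x=by+c\},
\]
where the union is over the finitely many essential disjuncts \(\psi\). The statement requires \(m\ge 1\); if \(m=0\), we pad with a variable whose coefficient is \(0\) (assuming \(S\neq\emptyset\), which holds since \(f(S^m)=M\neq\emptyset\) forces it when \(m\ge1\)), or we note that the case \(m=0\) makes \(M\) finite, which is absurd. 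The coefficients \(a_i\) may be zero, and the statement allows this.

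The main obstacle is the infinitude argument in the sorting step, specifically checking that only finitely many \(y\) are excluded by the inequations. This holds because each inequation \(t(x_0)+b'y\ne c'\) with \(b'\ne0\) excludes at most one \(y\), by torsion-freeness, and inequations with \(b'=0\) do not depend on \(y\) at all.
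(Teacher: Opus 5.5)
Your proof is correct and follows essentially the same route as the paper: quantifier elimination gives a disjunction of conjunctions of equations, inequations and positive congruences, and any disjunct without an equation having nonzero $y$-coefficient has infinitely many solutions in $y$ for a fixed $x$, contradicting $|f(x)|\le n$. The remaining differences are cosmetic: you merge the paper's two subcases ($J_1=\emptyset$, and all $b=0$) into a single ``non-essential'' case, and you remark explicitly that $m=0$ is impossible.
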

\begin{proof}
  Since \(S\) is large, there is a definable multivalued function \(f \colon \mathbb{Z}^m \overset{n}{\to} \mathbb{Z}\) with \(f(S^m) = \mathbb{Z}\). Let \(\varphi(x,y,d)\) be the formula defining the graph of \(f\), where \(d\) is a tuple of parameters in \(M\). This means for all \(s\in M^m\) and \(t\in M\), we have \(t\in f(s)\) if and only if \(\mathcal{M} \models \varphi(s,t,d)\).
  Since \(\mathcal{M}\) has quantifier elimination, we may assume that \(\varphi(x,y,d)\) is of the form \(\bigvee_i \bigwedge_{j} \theta_{i,j}(x,y,d)\), where each \(\theta_{i,j}\) is an atomic formula or the negation of an atomic formula.
  So each \(\theta_{i,j}(x,y,d)\) is equivalent to a formula of one of the following three forms, where \(a_1\), \dots, \(a_m\), \(b\in \mathbb{Z}\) and \(c\in M\):
  \begin{enumerate}
      \item \(\sum_{k=1}^{m} a_{k}\cdot x_k = b\cdot y + c\) \label{it:equation}
      \item \(\sum_{k=1}^{m} a_k\cdot x_k \ne b\cdot y + c\) \label{it:inequality}
      \item \(D_p(\sum_{k=1}^{m} a_kx_k + by + c)\) \label{it:divisibility}
  \end{enumerate}
  Note that we do not need to separately consider the negation of \ref{it:divisibility}, as
  \[\mathcal{M} \models \neg D_p(x+k) \leftrightarrow \bigvee_{\substack{1\le u < p\\ u\ne k}}D_{p}(x+u).\]
  Define \(f_{i,j}(s) := \{t\in M : \mathcal{M} \models \theta_{i,j}(s,t)\}\) and \(f_i(s) = \bigcap_j f_{i,j}(s)\) for \(s\in S^m\). Thus we have \(\bigcup_{i} f_{i}(s) = f(s)\). We may assume that for each \(i\), there is some \(s\in S^m\) such that \(f_i(s)\ne \emptyset\), because otherwise this index \(i\) would be irrelevant.

  For a fixed \(i\), let \(J_1\), \(J_2\), \(J_3\) be the sets of indices \(j\) such that \(\theta_{i,j}\) is of the form \ref{it:equation}, \ref{it:inequality}, \ref{it:divisibility}, respectively. Towards a contradiction, suppose \(J_1 = \emptyset\). Pick some \(s\in S^m\) such that \(f_{i}(s)\ne \emptyset\). Then for \(j \in J_2\), the set \(f_{i,j}(s)\) is cofinite, so \(\bigcap_{j\in J_2} f_{i,j}(s)\) is cofinite as well. Furthermore, the set \(\bigcap_{j\in J_3} f_{i,j}(s)\) consists of the solutions of some system of linear congruences and is thus empty or infinite. So \(f_{i}(s) = \bigcap_{j\in J_2\cup J_{3}} f_{i,j}(s)\) is empty or infinite. Since \(f_{i}(s) \subseteq f(s)\) and \(|f(s)| \le n\), the set \(f_i(s)\) must be finite and thus empty, contradicting our assumption. Furthermore, suppose that for all \(j\in J_1\), the \(b\) in the formula \(\theta_{i,j}\) is equal to zero. Then \(f_{i,j}(s)\) is empty or infinite for all \(j\) and all \(s\in S^m\), which means we get a similar contradiction.

  So we have shown that for every \(i\), there is some \(j(i)\) such that \(\theta_{i,j(i)}(x,y)\) is equivalent to \(\sum_{k=1}^{m}a_{i,k} x_{k} = b_{i}y + c_{i}\), where \(b_i \ne 0\). Therefore,
  \begin{align*}
    M = f(S^m) &= \bigcup_{x\in S^m} \bigcup_i \,\bigcap_j \,\{y\in M : \mathcal{M} \models \theta_{i,j}(x,y,c_{i,j})\}\\
    & \subseteq \bigcup_{x\in S^m} \bigcup_i \,\{y\in M : \mathcal{M} \models \theta_{i,j(i)}(x,y,c_{i,j(i)})\}\\
    &= \bigcup_i \left\{y\in M : \exists x_1, \dots, x_m\in S \colon \sum_{k=1}^m a_{i,k}x_k = b_i y + c_i\right\} \subseteq M.
  \end{align*}
  \qedhere\vspace{-.6em}
\end{proof}

Our next step is to show that, over \((\mathbb{Z},+)\), smallness is equivalent to the following notion from \cite{Conant}.

\begin{definition}[{\cite[Definition 2.4]{Conant}}]
  Let \(S \subseteq \mathbb{Z}\). Define \[{\textstyle\sum_n }(S) := \{s_1+\dots+s_k : k\le n, s_1,\dots,s_k\in S\}\] for \(n\ge 1\). The set \(S\) is called \emph{sufficiently sparse} if \(\sum_n(\pm S)\) does not contain any nontrivial subgroup of \(\mathbb{Z}\) for all \(n\ge 1\), where \(\pm S := \{x, -x : x\in S\}\).
\end{definition}

\begin{lemma} \label{thm:one-more-sparse}
  Let \(S \subseteq \mathbb{Z}\) be sufficiently sparse and \(z\in \mathbb{Z}\). Then \(S \cup \{z\}\) is sufficiently sparse.
\end{lemma}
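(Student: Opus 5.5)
We argue by contradiction. Suppose \(S' := S \cup \{z\}\) is not sufficiently sparse. Then there is some \(n \ge 1\) such that \(\sum_n(\pm S')\) contains a nontrivial subgroup \(d\mathbb{Z}\) with \(d \ge 1\). The plan is to show that then some \(\sum_{N}(\pm S)\) contains a nontrivial subgroup, contradicting the sparseness of \(S\). We may assume \(z \ne 0\), since adding \(0\) to \(S\) does not change any \(\sum_n(\pm S)\) beyond what the term \(k \le n\) already allows.

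The key observation is a decomposition of sums. Every element of \(\sum_n(\pm S')\) can be written as \(jz + w\) with \(|j| \le n\) and \(w \in \sum_n(\pm S)\). Here we interpret \(\sum_n\) as allowing the empty sum, which is harmless since \(0\) can be absorbed by using \(s - s\) and enlarging \(n\) by \(2\). Hence
\[ d\mathbb{Z} \subseteq \bigcup_{j=-n}^{n} \Bigl(jz + {\textstyle\sum_n}(\pm S)\Bigr). \]

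We now pigeonhole on multiples of a suitable subgroup. Consider the elements \(m\cdot d\cdot 2nz\) for \(m \in \mathbb{Z}\); it is cleaner to work in the subgroup \(H := d z\mathbb{Z} \subseteq d\mathbb{Z}\). Each \(h \in H\) has the form \(h = j_h z + w_h\) with \(|j_h| \le n\) and \(w_h \in \sum_n(\pm S)\). Take the \(2n+2\) elements \(h_i := i\cdot dz\) for \(i = 0, \dots, 2n+1\). For each of them, \(w_{h_i} = (i d - j_{h_i}) z\) is an integer multiple of \(z\) with coefficient \(c_i := i d - j_{h_i}\).

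A cleaner route is this: the set \(\sum_n(\pm S)\) meets every coset \(jz + d\mathbb{Z}\)-slice, so I instead aim to show that \(\sum_{2n}(\pm S)\) contains \(2n d z\,\mathbb{Z}\)-type elements as follows. For each \(t \in d\mathbb{Z}\), choose \(j(t)\) with \(t - j(t)z \in \sum_n(\pm S)\). Restrict to \(t \in D := (2n+1) d z\,\mathbb{Z}\) and look at \(t\) and \(t' := t + (2n+1)dz\cdot r\). Rather than chase this, I plan to use the standard trick: consider the map \(t \mapsto j(t) \in \{-n, \dots, n\}\) on \(d\mathbb{Z}\). For any \(t\), we have \(t = (t - j(t)z) + j(t)z\). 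Also \(j(t)z = j(t)\cdot(\text{element of } d\mathbb{Z}\text{ when } d \mid z)\). So we replace \(d\) by \(d' := dz\), a nontrivial subgroup \(d'\mathbb{Z} \subseteq d\mathbb{Z}\). For \(t \in d'\mathbb{Z}\) we write \(t = w_t + j(t)z\) and \(j(t) z = j(t)(z)\). The point is to remove the \(z\)-part by subtracting: \(t - t_0\) for a \(t_0\) with \(j(t_0) = j(t)\) gives \(t - t_0 = w_t - w_{t_0} \in \sum_{2n}(\pm S)\).

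So the \textbf{main step} is the following. Partition \(d\mathbb{Z}\) into the \(2n+1\) classes \(C_j := \{t : j(t) = j\}\). Then \(C_j - C_j \subseteq \sum_{2n}(\pm S)\), and \(\bigcup_j (C_j - C_j)\) must contain a nontrivial subgroup. This is the obstacle: a finite partition of \(d\mathbb{Z}\) need not have a class whose difference set contains a subgroup. I would handle it by a Ramsey/pigeonhole iteration, using the sums \(C_j - C_j + C_{j'} - C_{j'}\) and enlarging \(N\) to \(2n(2n+1)\).

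Concretely, for each \(k \in \mathbb{Z}\), pigeonhole among \(kd, 2kd, \dots, (2n+2)kd\). Two of them, \(akd\) and \(bkd\) with \(a < b\), lie in the same class, so \((b-a)kd \in \sum_{2n}(\pm S)\) with \(1 \le b-a \le 2n+1\). Let \(L := (2n+1)!\), so \(kdL\) is a multiple \(\frac{L}{b-a}\) of \((b-a)kd\), with multiplier at most \(L\). This gives \(kdL \in \sum_{2nL}(\pm S)\) for every \(k\). Hence \(dL\mathbb{Z} \subseteq \sum_{2nL}(\pm S)\), which contradicts the sufficient sparseness of \(S\). The only delicate point is this last pigeonhole-plus-factorial step, which does go through as described.
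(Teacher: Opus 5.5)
Your proof is correct, but it takes a genuinely different route from the paper's.

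\textbf{The paper's route.} It first disposes of the case \(S\subseteq\{0\}\). Then it fixes a nonzero \(s\in S\) and multiplies each representation \(by=k_1x_1+\dots+k_mx_m+kz\) by \(s\). The term \(kz\cdot s\) then becomes \(kz\) copies of \(\pm s\), which gives \(bs\mathbb{Z}\subseteq\sum_{N}(\pm S)\) with \(N=n\max(|s|,|z|)\) in one line.

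\textbf{Your route.} You never use the value of \(z\) or any particular element of \(S\). You pigeonhole the \(z\)-coefficient \(j(\cdot)\in\{-n,\dots,n\}\) over \(kd,2kd,\dots,(2n+2)kd\). Subtracting two terms with equal coefficient cancels the \(z\)-part, so \((b-a)kd\in\sum_{2n}(\pm S)\) with \(1\le b-a\le 2n+1\). Rescaling by \(L/(b-a)\) with \(L=(2n+1)!\) then gives the uniform subgroup \(dL\mathbb{Z}\subseteq\sum_{2nL}(\pm S)\).

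\textbf{What each buys.} The paper's argument is shorter, but it relies on the multiplicative structure of \(\mathbb{Z}\): it needs \(s\cdot z\) to be a multiple of \(s\), and it needs a case split to obtain a nonzero \(s\). Your argument uses only the additive structure and the fact that \(LH\neq 0\) for a nontrivial subgroup \(H\). So it needs no case distinction and works in any torsion-free abelian group. Your bound \(2n(2n+1)!\) depends only on \(n\), whereas the paper's \(n\max(|s|,|z|)\) depends on the sizes of \(s\) and \(z\); neither bound dominates the other.

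\textbf{Write-up.} Delete the abandoned detours: the subgroup \(dz\mathbb{Z}\), the replacement \(d'=dz\), and the ``Ramsey iteration'' with \(C_j-C_j+C_{j'}-C_{j'}\). The paragraph beginning ``Concretely'' is self-contained. Also, under the paper's convention \(\mathbb{N}=\{0,1,\dots\}\), the empty sum already lies in \(\sum_n(\pm S)\). So your remark about absorbing \(0\) via \(s-s\) is unnecessary, and that remark would in any case fail for \(S=\emptyset\).
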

\begin{proof}
  Suppose \(S \cup \{z\}\) was not sufficiently sparse, so \(b \mathbb{Z} \subseteq \sum_n (\pm (S \cup \{z\}))\) for some \(b\), \(n> 0\). This means that for every \(y\in \mathbb{Z}\), there are \(k_1\), \dots, \(k_m\), \(k\in \mathbb{Z}\) as well as \mbox{\(x_1\), \dots, \(x_{m}\in S\)} such that \(|k_1| + \dots + |k_m| + |k| \le n\) and
  \[by = k_1x_1 + \dots + k_{m}x_{m} + kz.\]
  The conclusion clearly holds for \(S \subseteq \{0\}\), so suppose there is \(s\in S \setminus \{0\}\). Then the above equation is equivalent to
  \[(bs) \cdot y = (k_1s) \cdot x_1 + \dots + (k_ms)\cdot x_m + (kz)\cdot s.\]
  Setting \(k_i' := k_is\) for \(i\in \{1, \dots, m\}\) and \(k_{m+1}' := kz\) as well as \(x_{m+1} := s\), this shows that for every \(y\in \mathbb{Z}\), there are \(k_1'\), \dots, \(k_{m+1}'\in \mathbb{Z}\), \(x_1\), \dots, \(x_{m+1}\in S\) such that
  \[(bs)\cdot y = k_1'x_1 + \dots + k_{m+1}' x_{m+1}.\]
  So with \(N := n\cdot \max(|s|, |z|)\), we have \(bs \mathbb{Z} \subseteq \sum_{N}(\pm S)\), which means that \(S\) is not sufficiently sparse.
\end{proof}

\begin{proposition}\label{thm:small-sparse}
  A set \(S \subseteq \mathbb{Z}\) is small in \((\mathbb{Z},+,-,0,1)\) if and only if it is sufficiently sparse.
\end{proposition}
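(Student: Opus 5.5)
We prove both directions by contraposition: a large set is not sufficiently sparse, and a set that is not sufficiently sparse is large.

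\emph{Large implies not sufficiently sparse.} Suppose \(S\) is large in \((\mathbb{Z},+,-,0,1)\). The definable sets are the same after adding the definable predicates \(D_n\), so Lemma~\ref{thm:large-qe} applies. It gives finitely many tuples \((a_{i,1},\dots,a_{i,m},b_i,c_i)\), with \(b_i\neq 0\) and \(i\) ranging over a finite set \(I\), such that every \(y\in\mathbb{Z}\) satisfies
\[b_iy+c_i=\sum_k a_{i,k}x_k\]
for some \(i\) and some \(x_1,\dots,x_m\in S\).

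The plan is to remove the constants \(c_i\) and the dependence on \(i\), and so exhibit a nontrivial subgroup inside some \(\sum_N(\pm S)\). Put \(B:=\prod_{i} b_i\).
\begin{enumerate}
\item For each \(i\), multiply the equation by \(B/b_i\). Then every \(y\) has the property that \(By+c_i'\) is an integer combination of \(m\) elements of \(S\), with coefficients bounded in absolute value by some constant \(K\).
\item To dispose of the constants, enlarge \(S\) to \(S':=S\cup\{c_i' : i\in I\}\). By Lemma~\ref{thm:one-more-sparse}, applied finitely often, \(S'\) is sufficiently sparse if \(S\) is.
\item Now every \(y\) satisfies \(By\in \sum_N(\pm S')\) with \(N:=m K+\max_i|c_i'|\), using the coefficient \(-1\) on \(c_i'\). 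Hence \(B\mathbb{Z}\subseteq\sum_N(\pm S')\), so \(S'\) is not sufficiently sparse, and therefore neither is \(S\).
\end{enumerate}

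\emph{Not sufficiently sparse implies large.} Suppose \(b\mathbb{Z}\subseteq\sum_n(\pm S)\) with \(b>0\), and assume \(S\neq\emptyset\) (otherwise \(\sum_n(\pm S)\subseteq\{0\}\) contains no nontrivial subgroup). Every element of \(\sum_n(\pm S)\) has the form \(\sum_{j=1}^n \epsilon_j x_j\) with \(\epsilon_j\in\{-1,0,1\}\) and \(x_j\in S\). For \(x\in\mathbb{Z}^n\), define the multivalued function
\[f(x):=\{\,y : by+r=\textstyle\sum_j\epsilon_jx_j \text{ for some } \epsilon\in\{-1,0,1\}^n,\ 0\le r<b\,\}.\]
It is definable without quantifiers (equations with parameters \(r\)), and \(|f(x)|\le 3^n b\), since each choice of \(\epsilon\) and \(r\) determines at most one \(y\) because \(b\neq0\).

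Surjectivity of \(f\) on \(S^n\): any \(y\in\mathbb{Z}\) can be written as \(y=qb+r\) with \(0\le r<b\), but we need \(by+r\in\sum_n(\pm S)\), which is a different requirement. The fix is to use the trivial inclusion \(by\in b\mathbb{Z}\subseteq\sum_n(\pm S)\) and simply take \(r=0\). Thus \(f(S^n)=\mathbb{Z}\), and \(S\) is large. In this version \(r\) is not even needed.

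The main obstacle is the first direction. One has to be careful that Lemma~\ref{thm:large-qe}, stated for models of \(\Th(\mathbb{Z},+,-,0,1,(D_n))\), is correctly applied to \(\mathbb{Z}\) itself. One also has to check that the handling of the parameters \(c_i\) via Lemma~\ref{thm:one-more-sparse} yields a single uniform \(N\); it does, because there are finitely many \(i\) and the coefficients are fixed.
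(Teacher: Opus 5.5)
Your proof is correct and essentially matches the paper's. For large $\Rightarrow$ not sufficiently sparse, you use Lemma~\ref{thm:large-qe}, clear the $b_i$ via their product, and absorb the constants with Lemma~\ref{thm:one-more-sparse}; the only difference is that the paper adjoins $1$ and writes $c_i$ as $|c_i|$ copies of $\pm 1$, while you adjoin the $c_i'$ themselves. For the converse, you write down the witness $x\mapsto\{y : by=\sum_j\epsilon_jx_j\}$ directly, which is the paper's summation map composed with division by $b$, and so you make explicit what the paper leaves implicit (that definable multivalued images of small sets are small and that nontrivial subgroups of $\mathbb{Z}$ are large).
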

\begin{proof}
  Suppose \(S\) is small. Let \(f \colon \mathbb{Z}^m \overset{n}{\to} \mathbb{Z}\) be the multivalued function given by
  \[f(x_{1},\dots,x_{n}) := \left\{ \sum_{i=1}^k \varepsilon_ix_i : k\le n, \varepsilon_i\in\{1,-1\}\right\}.\]
  This is clearly definable in the multivalued sense, and \(f(S) = \sum_n(\pm S)\). The smallness of \(S\) implies that \(\sum_{n}(\pm S)\) is also small. Since every non-trivial subgroup of \(\mathbb{Z}\) is large, the set \(\sum_n(\pm S)\) cannot contain any of them. Hence, \(S\) is sufficiently sparse.

  Now suppose that \(S\) is large. Then by Lemma \ref{thm:large-qe}, there are \(a_{i,j}\), \(b_i\), \(c_i \in \mathbb{Z}\) with \(b_i\ne 0\) for \(i\in \{1, \dots, n\}\), \(j\in \{1, \dots, m\}\) such that \(\mathbb{Z} = \bigcup_{i=1}^n S_i\), where
  \[S_i = \left\{ y\in \mathbb{Z} : \exists x_1, \dots, x_m \in S\colon  \sum_{j=1}^{m}a_{i,j} x_{j} = b_{i}y + c_{i}\right\}.\]
  Let \(d_i := |c_i| + \sum_{j=1}^m |a_{i,j}|\) and \(S' := S \cup \{1\}\). Then \(b_i S_i \subseteq \sum_{d_i}(\pm S')\) for every \(i \in \{1, \dots, n\}\). So with \(b := \prod_{i=1}^n b_{i} \) and \(d := |b| \cdot \max_i d_i\), it follows that
    \[b \mathbb{Z} = \bigcup_{i=1}^n b S_i \subseteq \bigcup_{i=1}^n \frac{b}{b_i} \sum\nolimits_{d_i} \!\left(\pm S'\right) \subseteq \sum\nolimits_{d} \!\left(\pm S'\right).\]
  Hence, \(S'\) is not sufficiently sparse. By Lemma \ref{thm:one-more-sparse}, \(S\) is not sufficiently sparse either.
\end{proof}

\begin{corollary}\label{thm:mann-small-z} 
  If \(S \subseteq \mathbb{Z}\) has the Mann property, then \(S\) is small in \((\mathbb{Z},+,-,0,1)\). In particular, the set \(\bigcup_{i=1}^m \ell_i^{\mathbb{N}}\) is small in \((\mathbb{Z},+,-,0,1)\) for any \(\ell_1\), \dots, \(\ell_m\in \mathbb{Z}\).
\end{corollary}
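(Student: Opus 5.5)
By Proposition~\ref{thm:small-sparse}, it suffices to show that a set \(S \subseteq \mathbb{Z}\) with the Mann property is sufficiently sparse. The ``in particular'' part then follows from the fact, quoted after Definition~\ref{def:mann-property}, that finite rank subgroups of \(\mathbb{Q}^\times\) have the Mann property. Indeed, \(\bigcup_i \ell_i^{\mathbb{N}}\) is a subset of the finitely generated group \(\langle \ell_1, \dots, \ell_m\rangle \le \mathbb{Q}^\times\), and the Mann property passes to subsets, since non-degenerate solutions in a subset are non-degenerate solutions in the group. I would also first discard \(0\) from \(S\) (the definition requires \(S \subseteq K^\times\)). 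This is harmless: \(0\) contributes nothing to \(\sum_n(\pm S)\), so removing it changes no \(\sum_n(\pm S)\).

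Suppose towards a contradiction that \(b\mathbb{Z} \subseteq \sum_n(\pm S)\) for some \(b, n \ge 1\), and fix \(n\) minimal for this \(b\). Then for every \(y \ne 0\) we get a representation \(by = \sum_{i=1}^k \varepsilon_i s_i\) with \(k \le n\), \(\varepsilon_i \in \{\pm 1\}\) and \(s_i \in S\). If some non-empty subsum vanished, deleting it would give a shorter representation. So every such representation may be taken non-degenerate as a solution of \(\sum \varepsilon_i x_i = by\).

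The key step is to produce infinitely many non-degenerate solutions of a single equation with fixed right-hand side. I would do this by subtracting the representations of \(by\) and \(b(y+1)\). This yields
\[
  b = \sum_{i} \varepsilon'_i s'_i - \sum_i \varepsilon_i s_i,
\]
a \(\pm 1\)-combination of at most \(2n\) elements of \(S\). Removing maximal vanishing subsums leaves a non-degenerate solution of one of the finitely many equations \(\sum_{j \le k} \delta_j x_j = b\) with \(k \le 2n\) and \(\delta_j \in \{\pm 1\}\). By the Mann property these equations have only finitely many non-degenerate solutions in total, so all surviving terms lie in a finite set \(F \subseteq S\) with \(\max_{s \in F}|s| \le B\).

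Since \(|by| \le n \max_i |s_i|\), every representation of \(by\) for large \(y\) contains a term of absolute value at least \(|by|/n > B\). All such large terms must therefore be absorbed into the deleted vanishing subsums. The plan is then to show that the pieces of the representation of \(by\) involved in these vanishing subsums can be traded for pieces of the representation of \(b(y+1)\) to shorten one of them, or else to iterate the argument along \(y, y+1, \dots\). In either case the goal is to contradict the minimality of \(n\), or to force infinitely many distinct non-degenerate solutions of one fixed equation.

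This bookkeeping is the main obstacle: controlling how the vanishing subsums interleave terms from the two representations. If it becomes unwieldy, I would instead follow the proof of \cite[Lemma 6.1]{vdDGunaydin}. There one inducts on \(n\), showing that \(\sum_n(\pm S)\) is, outside a finite set, covered by finitely many translates \(c + \sum_{n-1}(\pm S)\) with \(c\) ranging over a finite set determined by the Mann property. This reduces the claim to the case \(n = 1\), where \(\pm S\) cannot contain \(b\mathbb{Z}\): the equation \(x_1 - x_2 = b\) has only finitely many non-degenerate solutions in \(S\), so two consecutive multiples of \(b\) cannot both lie in \(\pm S\) infinitely often.
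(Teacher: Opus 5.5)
Your reduction to sufficient sparsity via Proposition~\ref{thm:small-sparse} and your handling of the ``in particular'' clause (the Mann property passes to subsets; discard \(0\)) are fine and match the paper. The gap is the implication from the Mann property to sufficient sparsity itself, which the proposal does not prove.

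Comparing the representations of \(by\) and \(b(y+1)\) for a single \(y\) cannot give a contradiction, because the large terms may simply be shared. For example, take \(by=\Sigma P+\Sigma R\) and \(b(y+1)=\Sigma P+\Sigma R'\), with \(P\) huge and \(\Sigma R'-\Sigma R=b\) a fixed small non-degenerate solution. This already occurs for \(S=2^{\mathbb{N}}\), \(b=1\), \(y=2^a+1\). Then trading shortens nothing, and no single equation acquires infinitely many non-degenerate solutions.

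Your fallback does not close the gap either. \cite[Lemma 6.1]{vdDGunaydin} concerns algebraically closed fields and is not an argument about \(\sum_n(\pm S)\). Moreover, the covering statement you attribute to it is false. For \(S=2^{\mathbb{N}}\), the elements \(33\cdot 2^a=2^a+2^{a+5}\in\sum_2(\pm S)\) satisfy \(|33\cdot 2^a\mp 2^d|\ge 2^a\) for every \(d\). So all but finitely many of them lie outside any finite union of translates \(c+\sum_1(\pm S)\). Also, an induction through translates would need a base case with translates, not just \(\pm S\not\supseteq b\mathbb{Z}\). The paper avoids all this by citing \cite[Corollary 5.7]{vdDGunaydin} for the uniform weighted sum property and then \cite[Lemma 3.3]{Conant} for sufficient sparsity.

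Your own idea can be completed if you make the statement about \(b\mathbb{Z}\) rather than all of \(\sum_n(\pm S)\). Show by induction on \(n\) that \(b\mathbb{Z}\not\subseteq F+\sum_n(\pm S)\) for every finite \(F\); the case \(n=0\) is trivial. If it fails, write \(by=f_y+\Sigma\rho_y\), with \(f_y\in F\) and \(\rho_y\) a signed multiset from \(\pm S\) of minimal length \(\le n\). Put \(r_y:=b-f_{y+1}+f_y\), the sum of \(\rho_{y+1}\sqcup(-\rho_y)\). Your trading argument shows that every vanishing block deleted from this multiset has equally many terms from each side. There are then three cases:
\begin{enumerate}
\item If \(r_y\ne 0\), the non-degenerate remainder has all its terms in a finite set \(E\), since only finitely many equations are involved. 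If its part \(R\) coming from \(\rho_y\) is non-empty, then \(by\in F+\sum_n(\pm E)+\sum_{n-1}(\pm S)\).
\item If \(r_y\ne 0\) and \(R\) is empty, then \(|\rho_{y+1}|>|\rho_y|\).
\item If \(r_y=0\), then \(|\rho_{y+1}|=|\rho_y|\). But \(f_{y+1}=f_y+b\) can hold at most \(|F|-1\) times in a row.
\end{enumerate}
Since lengths are bounded by \(n\), every window of \((n+1)|F|\) consecutive values of \(y\) contains one of the first kind. Hence \(b\mathbb{Z}\subseteq F'+\sum_{n-1}(\pm S)\) for some finite \(F'\), contradicting the induction hypothesis. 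This would even avoid the uniformity from \cite[Corollary 5.7]{vdDGunaydin}, but it is exactly the step your proposal leaves open.
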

\begin{proof}
  By \cite[Corollary 5.7]{vdDGunaydin}, the Mann property implies the following uniform version of the Mann property, called the \emph{weighted sum property} in \cite[Definition 3.2]{Conant}: For all \(k\ge 1\), there is some \(n\ge 0\) such that, for any \(r\in \mathbb{Z} \setminus \{0\}\), the equation \(x_1 + \dots + x_k = r\) has at most \(n\) non-degenerate solutions in \(S\). Therefore, \(S\) is sufficiently sparse by \cite[Lemma 3.3]{Conant}, and thus small in \((\mathbb{Z},+,-,0,1)\) by Proposition \ref{thm:small-sparse}.
\end{proof}

The advantage of the notion of smallness is that it also makes sense over models whose domain is larger than \(\mathbb{Z}\). We show that in this case, smallness is implied by the Mann axioms.
\begin{lemma} \label{thm:at-most-one}
   Let \(S \subseteq \mathbb{Z}\), and let \(\mathcal{M} = (M,+,-,0,1)\) be a torsion-free abelian group. Furthermore, let \(A \subseteq M\) be such that for all \(a_1\), \dots, \(a_m\), \(b \in \mathbb{Z}\) and \(\alpha_1\), \dots, \(\alpha_m \in A\), if \((\alpha_1, \dots, \alpha_m)\) is a non-degenerate solution of \(a_1x_1+ \dots + a_mx_m = b\), then \(\alpha_1\), \dots, \(\alpha_m \in S\). Then for any \(c \in M\), the set \[\left\langle A \setminus \mathbb{Z} \right\rangle\div \cap (c + \mathbb{Z})\] contains at most one element.
\end{lemma}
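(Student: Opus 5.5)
Suppose \(g, h\in \langle A\setminus\mathbb{Z}\rangle\div \cap (c+\mathbb{Z})\); we want \(g = h\). Since \(g-h\in\mathbb{Z}\), the plan is to write \(g - h = k\) with \(k\in \mathbb{Z}\), unfold the definition of the divisible closure, and reduce to a linear equation in elements of \(A\setminus\mathbb{Z}\) with integer right-hand side. Then we can apply the hypothesis on non-degenerate solutions, much as in the sketch after Definition~\ref{def:mann-axioms}.

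\textbf{Step 1: reduce to an equation in \(A\setminus\mathbb{Z}\).} By definition of \(\langle\cdot\rangle\div\) in the torsion-free group \(M\), there are positive integers \(n_1,n_2\) with \(n_1 g\), \(n_2 h\in\langle A\setminus \mathbb{Z}\rangle\). (Here we use that the set of \(x\) with some nonzero multiple in a subgroup \(H\) is a divisibly closed subgroup containing \(H\).) Multiplying by \(N=n_1n_2\), we get
\[Ng - Nh = \sum_{i=1}^m a_i\alpha_i = Nk\]
with \(a_i\in\mathbb{Z}\) and pairwise distinct \(\alpha_i\in A\setminus\mathbb{Z}\). Suppose towards a contradiction that \(k\neq 0\). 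Then \(Nk\neq 0\).

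\textbf{Step 2: extract a non-degenerate solution.} We repeatedly remove sub-sums that vanish. Choose a nonempty \(J\subseteq\{1,\dots,m\}\) that is minimal with \(\sum_{j\in J}a_j\alpha_j = Nk\). Such a \(J\) exists because the full index set works and \(Nk\neq 0\) forces the set to be nonempty. For any nonempty \(J'\subseteq J\), if \(\sum_{j\in J'}a_j\alpha_j=0\), then \(J\setminus J'\) also gives the sum \(Nk\). Since \(Nk \neq 0\), the set \(J\setminus J'\) is nonempty, which contradicts minimality. Hence \((\alpha_j)_{j\in J}\) is a non-degenerate solution of \(\sum_{j\in J} a_jx_j = Nk\). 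We may assume each \(a_j\neq 0\) for \(j\in J\), since a zero coefficient gives a vanishing sub-sum. By hypothesis every \(\alpha_j\in S\subseteq\mathbb{Z}\), contradicting \(\alpha_j\notin\mathbb{Z}\). Thus \(k=0\), so \(Ng=Nh\), and torsion-freeness gives \(g=h\).

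\textbf{Main obstacle.} The step needing most care is the bookkeeping in Step~1: checking that the divisible hull is exactly the set of elements having a nonzero multiple in \(\langle A\setminus\mathbb{Z}\rangle\), and that the combined expression involves only elements of \(A\setminus\mathbb{Z}\), with no stray integer terms. The first point uses torsion-freeness of \(M\). The second holds because we take the group generated by \(A\setminus\mathbb{Z}\) itself, not by \(A\). A minor point is that the hypothesis allows arbitrary integer coefficients and right-hand side, so no sign or nonzero restrictions on the \(a_i\) need to be checked beyond discarding zeros.
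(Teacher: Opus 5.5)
Your proof is correct and follows essentially the same route as the paper: clear denominators so that the difference of two elements of \(\langle A\setminus\mathbb{Z}\rangle\div\cap(c+\mathbb{Z})\) becomes a nonzero integer written as an integer combination of elements of \(A\setminus\mathbb{Z}\), pass to a non-degenerate sub-solution, and contradict the hypothesis that such solutions lie in \(S\subseteq\mathbb{Z}\). The paper cross-multiplies two explicit representations \(\sum a_i\alpha_i=b(c+k)\) instead of working with \(g-h\) directly, and leaves the extraction of the non-degenerate sub-solution (your minimal-\(J\) argument) as a parenthetical remark.
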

\begin{proof}
  Suppose
  \begin{align*}
    a_1 \alpha_1 + \dots + a_m \alpha_m &= b (c+k)\\
    a_1' \alpha_1' + \dots + a_n' \alpha_n' &= b' (c+k')
  \end{align*}
  for some \(a_1\), \dots, \(a_m\), \(a_1'\), \dots, \(a_n'\), \(b\), \(b'\) \(k\), \(k' \in \mathbb{Z}\) with \(b\), \(b' \ne 0\) and \(k \ne k'\), as well as some \(\alpha_1\), \dots, \(\alpha_m\), \(\alpha'_1\), \dots, \(\alpha'_n \in A \setminus \mathbb{Z}\). Then subtracting \(b'\) times the first equation from \(b\) times the second equation gives
  \[\sum_{i=1}^m a_ib' \alpha_i - \sum_{j=1}^n a'_jb \alpha'_j = bb'(k'-k) \in \mathbb{Z} \setminus \{0\}.\]
  This contradicts our assumption (after removing all parts that sum to zero to obtain a non-degenerate solution).
\end{proof}

\begin{proposition}\label{thm:mann-small-general} 
  Let \(S \subseteq \mathbb{Z}\) satisfy the Mann property, and let \(\mathcal{M}\) be a model of \(\Th(\mathbb{Z},+,-,0,1)\). Furthermore, let \(A \subseteq M\) be such that for all \linebreak[2]\(a_1\), \dots, \(a_m\), \(b \in \mathbb{Z}\) and \(\alpha_1\), \dots, \(\alpha_m \in A\), if \((\alpha_1, \dots, \alpha_m)\) is a non-degenerate solution of \(a_1x_1+ \dots + a_mx_m = b\), then \(\alpha_1\), \dots, \(\alpha_m \in S\). Then \(A\) is small in \(\mathcal{M}\).
\end{proposition}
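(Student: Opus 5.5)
We argue by contradiction: suppose \(A\) is large in \(\mathcal{M}\). Since \(\mathcal{M} \models \Th(\mathbb{Z},+,-,0,1)\) has quantifier elimination after adding the \(D_n\), Lemma \ref{thm:large-qe} applies. It gives finitely many triples \((a_{i,1},\dots,a_{i,m}), b_i\ne 0, c_i\in M\) such that
\[M = \bigcup_{i=1}^n \Bigl\{ y\in M : \exists x_1,\dots,x_m\in A \colon \textstyle\sum_j a_{i,j}x_j = b_iy+c_i\Bigr\}.\]
Our strategy is to show that every such set can meet \(\mathbb{Z}\) only in a set that comes from \(S\) plus a bounded "nonstandard" contribution. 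We then derive a contradiction with the smallness of \(S\) in \(\mathbb{Z}\), using Corollary \ref{thm:mann-small-z} and Proposition \ref{thm:small-sparse}.

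\emph{Splitting the sum.} For \(y\in\mathbb{Z}\) in the \(i\)-th set, pick a witness \(x_1,\dots,x_m\in A\). Split the indices according to whether \(x_j\in\mathbb{Z}\) or \(x_j\notin \mathbb{Z}\). Then
\[\sum_{x_j\notin\mathbb{Z}} a_{i,j}x_j = b_iy + c_i - \sum_{x_j\in\mathbb{Z}} a_{i,j}x_j \in c_i+\mathbb{Z}.\]
The left side lies in \(\langle A\setminus\mathbb{Z}\rangle \subseteq \langle A\setminus \mathbb{Z}\rangle\div\). By Lemma \ref{thm:at-most-one}, which applies since \(\mathcal{M}\) is a torsion-free abelian group and the hypothesis on \(A\) is exactly the one assumed there, this value is a single element \(e_i\in c_i+\mathbb{Z}\), independent of \(y\). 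Ranging over the finitely many subsets \(J\subseteq\{1,\dots,m\}\) and the possible cases (including the case where no non-integer \(x_j\) occurs), we obtain finitely many integers \(k_{i,J}\) with the following property. Every \(y\in\mathbb{Z}\) satisfies
\[b_iy + k_{i,J} = \sum_{j\in J} a_{i,j}x_j\]
for some \(i\), \(J\) and some \(x_j\in A\cap\mathbb{Z}\). Here \(k_{i,J}:=c_i-e_i\) when non-integer terms occur, and \(k_{i,J}:=c_i\) (then necessarily \(c_i\in\mathbb{Z}\)) otherwise. Cases where neither option yields an integer contribute no integer \(y\).

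\emph{Reducing to \(S\).} Next we show that \(A\cap\mathbb{Z}\) can be replaced by \(S\cup\{1\}\). If \(\alpha\in A\cap\mathbb{Z}\) with \(\alpha\neq0\), then \((\alpha)\) is a non-degenerate solution of \(1\cdot x_1=\alpha\), so \(\alpha\in S\). Hence \(A\cap\mathbb{Z}\subseteq S\cup\{0\}\). Consequently
\[\mathbb{Z}=\bigcup_{i,J}\{y : \exists x\in (S\cup\{0,1\})^{m}: \textstyle\sum_j a_{i,j}x_j = b_iy + k_{i,J}\cdot 1\}.\]
This exhibits \(\mathbb{Z}\) as the image of \((S\cup\{0,1\})^{m}\) under a \(\emptyset\)-definable multivalued function \(\mathbb{Z}^m\overset{N}{\to}\mathbb{Z}\) in \((\mathbb{Z},+,-,0,1)\). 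The number of values is bounded, since each \(b_i\neq0\) gives at most one \(y\) per pair \((i,J)\). Therefore \(S\cup\{0,1\}\) is large in \((\mathbb{Z},+,-,0,1)\).

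\emph{Contradiction.} Alternatively, we can run the argument from the proof of Proposition \ref{thm:small-sparse} directly: we get \(b\mathbb{Z}\subseteq\sum_d(\pm(S\cup\{1\}))\). By Proposition \ref{thm:small-sparse} and Corollary \ref{thm:mann-small-z}, \(S\) is sufficiently sparse. By Lemma \ref{thm:one-more-sparse}, so is \(S\cup\{1\}\) (adding \(0\) is harmless), which contradicts the inclusion.

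The main obstacle is the first step. We must make sure that Lemma \ref{thm:at-most-one} really pins down the non-integer part as a single element, uniformly in \(y\). We also need to handle carefully the bookkeeping of which witnesses are integral; this has to be done per subset \(J\), which is why we take the finite union over \(J\).
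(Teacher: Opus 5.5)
Your proposal is correct and follows the paper's proof essentially step by step. You apply Lemma \ref{thm:large-qe}, split each witness into integral and non-integral parts, use Lemma \ref{thm:at-most-one} to pin the non-integral part down to a single element of \(c_i+\mathbb{Z}\), and conclude that \(\mathbb{Z}\) is covered by finitely many integer-shifted equations in elements of \(S\), contradicting Corollary \ref{thm:mann-small-z}. The only difference is bookkeeping: you work with \(S\cup\{0,1\}\), padding unused coordinates by \(0\), which also handles a possible witness \(0\in A\) that the paper passes over. You then absorb the extra elements via Lemma \ref{thm:one-more-sparse} and Proposition \ref{thm:small-sparse}, whereas the paper appeals directly to the smallness of \(S\).
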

\begin{proof}
  If \(A\) is large in \(\mathcal{M}\), then by Lemma \ref{thm:large-qe}, there are \(a_{ij}\in \mathbb{Z}\), \(b_i\in \mathbb{Z} \setminus \{0\}\) and \(c_i \in M\) such that
  \[M = \bigcup_{i=1}^n \left\{y\in M : \exists x_1, \dots, x_{m_i}\in A \colon  \sum_{j=1}^{m_i} a_{ij}x_j = b_iy + c_i\right\}.\]
  In particular,
  \[\mathbb{Z} = \bigcup_{i=1}^n \left\{y\in \mathbb{Z} : \exists x_1, \dots, x_{m_i}\in A \colon  \sum_{j=1}^{m_i} a_{ij}x_j = b_iy + c_i\right\}.\]
  To generate an integer \(y\), we need that \(\sum_j a_{ij} x_j \in c_i + \mathbb{Z}\). Set
  \[I := \left\{i\in \{1, \dots, n\} : \left\langle A \setminus \mathbb{Z} \right\rangle\div \cap (c_i + \mathbb{Z}) \ne \emptyset \right\}.\]
  For each \(i\in I\), let \(s_i\) be the unique element of \(\left\langle A \setminus \mathbb{Z} \right\rangle\div \cap (c_i + \mathbb{Z}) \) (cf. Lemma \ref{thm:at-most-one}). This means that for all \(y \in \mathbb{Z}\), if
  \[\sum_{j=1}^{m_i} a_{ij} \alpha_j = b_iy + c_i\]
  for some \(\alpha_1\), \dots, \(\alpha_{m_i} \in A\), then there is a subset \(J \subseteq \{1, \dots, m_i\}\), namely \(J = \{j : \alpha_j \in \mathbb{Z}\}\), such that
  \[\sum_{j\notin J} a_{ij} \alpha_j = s_i \quad \text{and} \quad  \sum_{j\in J} a_{ij} \alpha_j = b_iy + c_i - s_i.\]
  So we have shown that
  \[\mathbb{Z} = \bigcup_{i, J} \left\{y\in \mathbb{Z} : \exists \overline{x} \in S^{|J|} \colon  \sum_{j \in J} a_{ij}x_j = b_iy + (c_i - s_i)\right\}\]
  where the union ranges over all \(i\in I\) and all \(J \subseteq \{1, \dots, m_i\}\). Since \(c_i - s_i \in \mathbb{Z}\) for all \(i \in I\), this contradicts that \(S\) is small in \((\mathbb{Z},+,-,0,1)\) by Corollary \ref{thm:mann-small-z}.
\end{proof}

\section{Complete Theory Without Order}
\label{ch:orderless}

Throughout this section, fix a set \(L \subseteq \mathbb{Z}_{\ge 2}\) of pairwise multiplicatively independent numbers.

\begin{definition} \label{def:theory-without-order}
  Let \(T(L)\) be the \(\mathcal{L}\)-theory containing the following axiom schemata:
  \begin{enumerate}[label=(A\arabic*), start=1]
    \item the axioms of torsion-free abelian groups, as well as \(0 \ne 1\),
    \item the congruence axioms of Presburger:
      \[\forall x \,\bigvee_{j=1}^n \Biggl(D_n(x+j) \wedge \bigwedge_{\substack{k=1 \\k\ne j}}^{n} \neg D_n(x+k)\Biggr) \quad \text{ for every } n\ge 2,\] \label{ax:congruence} \vspace{-3mm}
    \item \(U_{\ell}(1) \wedge \forall x (U_\ell(x) \leftrightarrow U_\ell(\ell\cdot x))\) for each \(\ell\in L\), \label{ax:multiplication}
    \item the Mann axioms \(\Mann(L)\) (see Definition \ref{def:mann-axioms}),
    \item the Carmichael axioms \(\Carmichael(L)\) (see Definition \ref{def:carmichael-axioms}).
  \end{enumerate}
\end{definition}
As explained in Section \ref{ch:axioms}, we have \((\mathbb{Z},+,-,0,1,(\ell^{\mathbb{N}})_{\ell\in L}) \models T(L)\). 

\subsection{Back and Forth} We now show that this axiomatization is complete by constructing a back-and-forth system.

\begin{lemma} \label{thm:partial-type-realizable}
  Let \(\mathcal{M}\), \(\mathcal{N} \models T(L)\) be \(\kappa\)-saturated for some uncountable cardinal \(\kappa\). Furthermore, let \(M' \subseteq M\) and \(N' \subseteq N\) be subsets of cardinality less than \(\kappa\), and let \(f\colon M' \to N' \) be a partial isomorphism. Then for any \(\ell\in L\) and any \(\alpha\in A_\ell \setminus \mathbb{Z}\), the partial type
  \[p(x) := d_{\mathcal{M}}(\alpha) \cup \{U_\ell(x)\} \cup \{x\ne f(g) \colon g\in M'\}\]
  is realized in \(\mathcal{N}\).
\end{lemma}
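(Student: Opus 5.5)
By \(\kappa\)-saturation of \(\mathcal{N}\), and since \(p(x)\) has fewer than \(\kappa\) parameters (\(|M'|<\kappa\), and \(d_{\mathcal{M}}(\alpha)\) is countable), it suffices to show that \(p(x)\) is finitely satisfiable in \(\mathcal{N}\). So fix a finite \(\Delta \subseteq d_{\mathcal{M}}(\alpha)\) and finitely many \(g_1, \dots, g_r \in M'\). We must find \(y \in U_\ell^{\mathcal{N}}\) satisfying \(\Delta\) and with \(y \ne f(g_i)\) for all \(i\).

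The plan is to take \(y\) to be a standard power of \(\ell\). Since \(\mathcal{M} \models T(L)\) is a torsion-free abelian group satisfying \(\Carmichael(L)\), and \(\alpha \in A_\ell \setminus \mathbb{Z}\) (in particular \(\alpha \notin \ell^{\mathbb{N}}\)), Lemma \ref{thm:carmichael-use} gives infinitely many \(n \in \mathbb{N}\) such that \(\ell^n\) satisfies \(\Delta\) in \(\mathcal{M}\). The formulas in \(\Delta\) have the form \(D_{p^m}(x-u)\) with standard parameters. Whether the standard integer \(\ell^n\) satisfies such a formula is decided by the congruence axioms \ref{ax:congruence} together with the identification of \(\mathbb{Z}\) with the subgroup generated by \(1\). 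Hence \(\ell^n\) satisfies \(\Delta\) in \(\mathcal{N}\) exactly when \(\ell^n \equiv u \pmod{p^m}\) holds in \(\mathbb{Z}\), which is the same condition as in \(\mathcal{M}\). Moreover, axiom \ref{ax:multiplication} gives \(U_\ell(\ell^n)\) in \(\mathcal{N}\) for every \(n\).

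It remains to avoid the finitely many values \(f(g_1), \dots, f(g_r)\). Because there are infinitely many admissible \(n\) and the elements \(\ell^n\) are pairwise distinct (since \(\ell \ge 2\) and \(\mathbb{Z}\) embeds in \(N\)), we can choose such an \(n\) with \(\ell^n \notin \{f(g_1), \dots, f(g_r)\}\). Then \(y = \ell^n\) realizes this finite part of \(p(x)\), and saturation finishes the proof.

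The only step with real content is the appeal to Lemma \ref{thm:carmichael-use}, which supplies infinitely many witnesses. Beyond that, the one point to check is that satisfaction of congruence formulas by standard integers transfers between \(\mathcal{M}\) and \(\mathcal{N}\), which follows from \ref{ax:congruence} as noted above. Notably, the partial isomorphism \(f\) plays no role beyond giving finitely many values to avoid.
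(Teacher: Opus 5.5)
Your proof is correct and follows essentially the same route as the paper: reduce to finite satisfiability by saturation, invoke Lemma \ref{thm:carmichael-use} to get infinitely many standard powers \(\ell^n\) satisfying a given finite \(\Delta\), and discard the finitely many that coincide with some \(f(g_i)\). You also spell out two points the paper leaves implicit. One is that satisfaction of \(D_{p^m}(x-u)\) by standard integers is the same in \(\mathcal{M}\) and \(\mathcal{N}\), by \ref{ax:congruence}; the other is that \(U_\ell(\ell^n)\) holds in \(\mathcal{N}\), by \ref{ax:multiplication}.
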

\begin{proof}
  Since \(\mathcal{N}\) is \(\kappa\)-saturated and \(p(x)\) is a type with \(|M'| < \kappa\) parameters, it suffices to show that \(p(x)\) is finitely satisfiable. Let \(\Delta\) be a finite subset of \(d_{\mathcal{M}}(g)\), and let \(\Psi\) be a finite subset of \(p(x)\) with \(\Psi \cap d_{\mathcal{M}}(\alpha) = \emptyset \). If there exist infinitely many realizations \((\alpha_i)_{i\in \mathbb{N}}\) of \(\Delta\) in \(\ell^{\mathbb{N}} \subseteq \mathcal{N}\), then since \(\Psi\) contains only finitely many formulas of the form \(x\ne f(g)\), all but finitely many \(\alpha_i\) must satisfy \(\Psi\). It thus suffices that every finite \(\Delta \subseteq d_{\mathcal{M}}(\alpha)\) is satisfied by infinitely many elements of \(\ell^{\mathbb{N}}\). This follows from the Carmichael axioms by Lemma \ref{thm:carmichael-use}.
\end{proof}

\begin{lemma} \label{thm:congruence-outside-subgroup}
  Let \(\mathcal{M} \models \Th(\mathbb{Z},+)\), let \(\kappa\) be an uncountable cardinal, and let \((\mathcal{N},(B_\ell)_{\ell\in L}) \models T(L)\) be \(\kappa\)-saturated. Let \(\alpha\in M\) and let \(N' \subseteq N\) be a subset of cardinality less than \(\kappa\). Then there exists \(\beta\in N \setminus \left\langle N', B \right\rangle\div\) realizing the congruence type \(d_{\mathcal{M}}(\alpha)\).
\end{lemma}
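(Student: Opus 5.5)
We realize the partial type
\[q(x) := d_{\mathcal{M}}(\alpha) \cup \{x \notin \langle N', B\rangle\div\}\]
in \(\mathcal{N}\) by saturation. The condition \(x\notin \langle N',B\rangle\div\) is not a single formula, so we first express it as a set of formulas over \(N'\). An element \(x\) lies in \(\langle N', B\rangle\div\) if and only if there are \(b\in\mathbb{Z}\setminus\{0\}\), integers \(a_1,\dots,a_m\), elements \(g_1,\dots,g_k\in N'\) with integer coefficients \(c_1,\dots,c_k\), and \(\ell_1,\dots,\ell_m\in L\) with
\[\exists y_1\dots y_m\Bigl(\textstyle\bigwedge_i U_{\ell_i}(y_i)\wedge bx=\sum_i a_iy_i+\sum_j c_jg_j\Bigr).\]
Let \(\Psi\) be the set of negations of all these formulas. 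These are formulas with parameters from \(N'\), and there are fewer than \(\kappa\) of them, since \(\kappa\) is uncountable and \(|N'|<\kappa\). So \(q(x)=d_{\mathcal{M}}(\alpha)\cup\Psi\) is a type over fewer than \(\kappa\) parameters. By \(\kappa\)-saturation it suffices to show that \(q\) is finitely satisfiable in \(\mathcal{N}\).

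Take a finite \(\Delta\subseteq d_{\mathcal{M}}(\alpha)\) and finitely many formulas from \(\Psi\). By the Chinese remainder theorem, \(\Delta\) is equivalent to a single condition \(D_d(x-u)\) for some \(d,u\in\mathbb{Z}\). Since \(\mathcal{M}\models\Th(\mathbb{Z},+)\) is a model of Presburger congruences, this is satisfied by all elements of the coset \(u+d\mathbb{Z}\), and also by all elements of \(u+dN\) in \(\mathcal{N}\). The finitely many excluded sets have the form \(X_i=\{x : b_ix\in \sum_{\text{finite}} a_{i}B + h_i\}\) with \(h_i\in\langle N'\rangle\). We must find some \(x\in u+dN\) avoiding \(X_1\cup\dots\cup X_r\).

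Suppose not, so \(u+dN\subseteq\bigcup_i X_i\). Then \(N=\bigcup_i\{y: b_i(u+dy)\in \sum a_iB+h_i\}\), which exhibits \(N\) as a finite union of sets of the form in Lemma \ref{thm:large-qe}: the sets \(\{y : \exists x_1,\dots,x_m\in B\colon \sum a_{ij}x_j = b_id\,y + c_i\}\) with \(b_id\neq 0\) and \(c_i\in N\). This union is the image of \(B^m\) under a definable multivalued function \(N^m\overset{n}{\to}N\) with \(n\) bounded, since for fixed \(x\) each equation has at most one solution \(y\) by torsion-freeness. Hence \(B\) is large in \(\mathcal{N}\) (restricted to the group reduct, which is a model of \(\Th(\mathbb{Z},+)\) by axioms (A1)--(A2)).

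On the other hand, \(B\) is small by Proposition \ref{thm:mann-small-general}, applied with \(S=\bigcup_{\ell\in L}\ell^{\mathbb{N}}\) and \(A=B\). The set \(S\) has the Mann property: for finite \(L\) this follows from \cite{PoortenSchlickewei,Evertse,Laurent}, and for infinite \(L\) we need only finitely many \(\ell\) per equation. The hypothesis of the proposition, that non-degenerate solutions in \(B\) of \(\sum a_ix_i=b\) lie in \(S\), is exactly the argument sketched after Definition \ref{def:mann-axioms}. That argument, using the Mann axioms with \(y=1\), shows that such solutions are hard-coded integer tuples, possibly after first splitting them by which \(B_\ell\) each entry lies in. This contradiction gives finite satisfiability.

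The main obstacle is the bookkeeping in this last step. First, each defining formula of \(\langle N',B\rangle\div\) must be turned into the exact shape required by Lemma \ref{thm:large-qe}; since there are finitely many choices of \(\ell_i\), one merges them via \(B=\bigcup B_\ell\). Second, one must verify the Mann hypothesis of Proposition \ref{thm:mann-small-general} for mixed \(\ell\)'s. Here a non-degenerate solution \((\alpha_1,\dots,\alpha_m)\) with entries in various \(B_{\ell_i}\) gives a tuple \((\alpha_1,\dots,\alpha_m,1)\). This satisfies \(\theta_{\overline a,b,\overline\ell}\) with \(\ell_{m+1}\) chosen as any element of \(L\) (recall \(1\in B_\ell\) for all \(\ell\)), and the Mann axiom then forces each \(\alpha_j\) to be an integer power. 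In the equal-\(\ell\) case this uses \(t\alpha_j=s\) together with \(\mathcal{N}\cap\mathbb{Q}=\mathbb{Z}\), which follows from the congruence axioms.
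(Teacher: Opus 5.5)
Your overall strategy is sound and close to the paper's, but one step fails as written. You apply Proposition~\ref{thm:mann-small-general} with \(A=B\) and \(S=\bigcup_{\ell\in L}\ell^{\mathbb{N}}\). You justify the Mann property of this \(S\) for infinite \(L\) by saying that ``only finitely many \(\ell\) are needed per equation''. That justification is wrong. A single solution of \(a_1x_1+\dots+a_nx_n=b\) involves at most \(n\) of the \(\ell\)'s, but different solutions can involve different ones.

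For instance, let \(L\) be the set of integers \(\ge 2\) that are not perfect powers. This is a maximal pairwise multiplicatively independent set, the kind used in Corollary~\ref{thm:superstable}. Then \(\bigcup_{\ell\in L}\ell^{\mathbb{N}}=\mathbb{Z}_{\ge 1}\). So \(x_1-x_2=1\) has the infinitely many non-degenerate solutions \((n+1,n)\), and this \(S\) is even large in \((\mathbb{Z},+,-,0,1)\). Hence the final contradiction in the proof of Proposition~\ref{thm:mann-small-general} is unavailable for this \(S\). Merging your excluded sets into the full \(B=\bigcup_{\ell\in L}B_\ell\) throws away exactly the finiteness you need.

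The repair is immediate. The finitely many formulas from \(\Psi\) you handle mention only a finite set \(L_0\subseteq L\). Merge only into \(B_{L_0}:=\bigcup_{\ell\in L_0}B_\ell\); it still contains \(1\), so padding variables is harmless. Conclude that \(B_{L_0}\) is large, and apply Proposition~\ref{thm:mann-small-general} with \(A=B_{L_0}\) and \(S_0=\bigcup_{\ell\in L_0}\ell^{\mathbb{N}}\). This \(S_0\) lies in the finitely generated subgroup of \(\mathbb{Q}^\times\) generated by \(L_0\), so it has the Mann property. Your verification of the proposition's hypothesis via the Mann axioms goes through verbatim for \(B_{L_0}\). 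This is also the natural reading of the paper's appeal to that proposition for finite subsets of its type. With this fix your argument is correct.

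Compared with the paper, the route differs only in bookkeeping. The paper first realizes, on its own, the type expressing \(x\notin\langle N',B\rangle\div\). It then uses \(1\in\langle N',B\rangle\div\) to see that the whole coset \(\beta'+\mathbb{Z}\) stays outside, and picks an element of that coset satisfying \(\Delta\) by the congruence axioms. You instead fold \(\Delta\) into the covering by substituting \(x=u+dy\), which just replaces \(b_i\) by \(b_id\) and shifts the constant. Your version needs one compactness step instead of two; the paper's avoids redoing the largeness computation with the congruence built in. Your parameters \(h_i\in\langle N'\rangle\) are also the precise ones, where the paper writes \(c\in N'\).
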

\begin{proof}
  Let \(p(x)\) be the type consisting of the formulas
  \[\nexists y_1, \dots, y_n \left( \bigwedge_{i=1}^n U_{\ell_i}(y_i) \wedge a_1y_1 + \dots + a_ny_n = bx + c\right)\]
  for all \(n\ge 1\), \(\ell_1\), \dots, \(\ell_n\in L\), \(a_1\), \dots, \(a_n\), \(b\in \mathbb{Z}\setminus \{0\}\) and \(c\in N'\). Then \(N \setminus \left\langle N', B \right\rangle\div \) is exactly the set of realizations of \(p(x)\) in \(\mathcal{N}\). Every finite subset of \(p(x)\) is realizable in \(\mathcal{N}\) by Proposition \ref{thm:mann-small-general} and Lemma \ref{thm:large-qe}. So by saturation, \(p(x)\) is realized in \(\mathcal{N}\), which shows that \(N \setminus \left\langle N', B \right\rangle\div \ne \emptyset \).

  We want to find a realization \(\beta\in N\) of \(d_{\mathcal{M}}(\alpha) \cup p(x)\). Let \(\Delta \subseteq d_{\mathcal{M}}(\alpha)\) and \(\Psi \subseteq p(x)\) be finite. Take \(\beta' \in N \setminus \left\langle N', B \right\rangle\div\). Since \(\left\langle N', B \right\rangle\div\) is a proper subgroup of \(N\) containing \(1\), we have \(\beta' + k \notin \left\langle N', B \right\rangle\div\) for every \(k\in \mathbb{Z}\), so in other words, \(\beta' + k\) satisfies \(\Psi\) for every \(k\in \mathbb{Z}\). By the congruence axioms \ref{ax:congruence}, there is some \(k\in \mathbb{Z}\) such that \(\beta'+k\) satisfies \(\Delta\). This shows that \(d_{\mathcal{M}}(\alpha) \cup p(x)\) is finitely satisfiable and thus realized by saturation.
\end{proof}

\begin{definition} \label{def:sub}
  Let \((\mathcal{M}, (A_{\ell})_{\ell\in L}) \models T(L)\) and let \(\kappa\) be an uncountable cardinal. Then we define \(\Sub_{\kappa}(\mathcal{M})\) as the set of all subgroups \(M'\) of \(\mathcal{M}\) that satisfy the following properties:
  \begin{enumerate}
    \item \(1 \in M'\). \label{it:sub1}
    \item \(|M'| < \kappa\).  \label{it:sub2}
    \item \(M'\) is divisibly closed in \(\mathcal{M}\). \label{it:sub3}
    \item For any \(a_1\), \dots, \(a_m \in A \setminus M'\) and \(c_1\), \dots, \(c_m\in \mathbb{Z}\), we have that \(\sum_{i=1}^m c_ia_i \in M'\) implies \(\sum_{i=1}^m c_ia_i = 0\). \label{it:sub4}
  \end{enumerate}
\end{definition}

\begin{proposition} \label{thm:back-and-forth-system}
  Let \(\kappa\) be an uncountable cardinal, and let \((\mathcal{M},(A_\ell)_{\ell\in L})\), \((\mathcal{N},(B_\ell)_{\ell\in L})\) be \(\kappa\)-saturated models of \(T(L)\). Then
  \[\mathcal{I} := \left\{ f \colon M' \to N' \text{ partial isomorphism}: M'\in \Sub_{\kappa}(\mathcal{M}),\, N'\in \Sub_{\kappa}(\mathcal{N})\right\} \]
  is a non-empty back-and-forth system.
\end{proposition}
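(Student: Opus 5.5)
Non-emptiness comes first. I will take \(M'=\mathbb{Z}\) and \(N'=\mathbb{Z}\) with the identity map. Conditions \ref{it:sub1}--\ref{it:sub2} are clear. Condition \ref{it:sub3} holds because the congruence axioms force \(\mathcal{M}\cap\mathbb{Q}=\mathbb{Z}\). Condition \ref{it:sub4} is exactly the argument sketched after Definition \ref{def:mann-axioms}: a nonzero integer combination of elements of \(A\setminus\mathbb{Z}\) landing in \(\mathbb{Z}\setminus\{0\}\) contradicts the Mann axioms. The identity is a partial isomorphism because \(U_\ell\cap\mathbb{Z}=\ell^{\mathbb{N}}\) in every model of \(T(L)\). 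This uses \ref{ax:multiplication}, \(\Carmichael(L)\) (no negative integers or non-powers lie in \(A_\ell\), e.g.\ via congruence obstructions), and the one-variable Mann axioms.

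By symmetry it suffices to prove the forth step. Given \(f\colon M'\to N'\) in \(\mathcal I\) and \(\alpha\in M\setminus M'\), I will extend \(f\) in two cases.

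\emph{Case 1: \(\alpha\in A_\ell\setminus M'\) for some \(\ell\).} Use Lemma \ref{thm:partial-type-realizable}, strengthened by adding to \(p(x)\) the formulas requiring that \(x\) lie outside \(\langle N',B\setminus\{x\}\rangle\div\)-type combinations. This produces \(\beta\in B_\ell\setminus N'\) with the same congruence type. Set \(M''=\langle M',\alpha\rangle\div\) and \(N''=\langle N',\beta\rangle\div\), and map \((m+k\alpha)/n\mapsto (f(m)+k\beta)/n\). This is well defined because \(\alpha\) is independent over \(M'\): by \ref{it:sub4}, \(k\alpha\in M'\) forces \(k=0\). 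Divisibility is preserved by matching congruence types together with divisibility of \(M'\). The key verification is that \(M''\in\Sub_\kappa\) and that \(U_{\ell'}\) is preserved. The Mann axioms, through \ref{it:sub4} for \(M'\), show that any element of \(A\cap M''\setminus M'\) is a rational multiple of \(\alpha\) plus an element of \(M'\) that must vanish. The homogeneous Mann axioms (the case \(\ell_1=\dots=\ell_{n+1}\)) then restrict such elements to \(\ell^{j}\alpha\) or \(\alpha/\ell^j\). This must be mirrored on the \(\mathcal N\) side, so \(\beta\) must be chosen with the same pattern of \(\ell\)-divisibility inside \(B_\ell\). I would enlarge the type to encode, for each \(j\), whether \(\alpha/\ell^j\in A_\ell\), and then close under \(\ell\)-multiplication. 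In fact I would first replace \(\alpha\) by the ``primitive'' generator: the element of \(A_\ell\cap(\mathbb{Q}\alpha)\) that is not \(\ell\) times an element of \(A_\ell\). If infinitely divisible chains occur, I would handle them by adding the whole chain at once. Saturation allows all of this.

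\emph{Case 2: \(\langle M',\alpha\rangle\div\cap A\subseteq M'\) after closing.} First iterate Case 1 countably often, adding all elements of \(A\cap\langle M',\alpha\rangle\div\). Any remaining \(\alpha\) is then handled with Lemma \ref{thm:congruence-outside-subgroup}: choose \(\beta\notin\langle N',B\rangle\div\) with \(d(\beta)=d(\alpha)\) and extend linearly. No new element of \(B\) enters \(N''\), and symmetrically no new element of \(A\) enters \(M''\). Condition \ref{it:sub4} holds because any combination of elements of \(B\setminus N''\) lying in \(N''\) but not in \(N'\) would place \(\beta\) in \(\langle N',B\rangle\div\).

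\textbf{Main obstacle.} The hardest step is Case 1: verifying \ref{it:sub4} for \(M''\) and exact preservation of the \(U_{\ell'}\) predicates on \(M''\). This requires a careful use of both kinds of Mann axioms to show that the new elements of \(A\) are precisely the \(\ell\)-power multiples of \(\alpha\).
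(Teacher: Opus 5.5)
There is a genuine gap in your Case~2: you use the wrong dichotomy there. Lemma \ref{thm:congruence-outside-subgroup} produces \(\beta\notin\langle N',B\rangle\div\). That choice is only correct when \(\alpha\notin\langle M',A\rangle\div\). Your hypothesis \(\langle M',\alpha\rangle\div\cap A\subseteq M'\) (after adding everything in \(A\cap\langle M',\alpha\rangle\div\)) is strictly weaker.

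Here is a counterexample. Take \(M'=N'=\mathbb{Z}\), \(f=\id\), and \(\alpha=a_1+a_2\) with \(a_1\in A_k\setminus\mathbb{Z}\), \(a_2\in A_\ell\setminus\mathbb{Z}\) and \(k\neq\ell\). (If \(|L|=1\), take \(a_1,a_2\in A_\ell\setminus\mathbb{Z}\) with \(a_2\notin\mathbb{Q}a_1\).) Suppose \(a\in A_j\cap\langle\mathbb{Z},\alpha\rangle\div\setminus\mathbb{Z}\). Then \(na=g+ca_1+ca_2\) with \(c\neq 0\). Condition \ref{it:sub4} for \(\mathbb{Z}\) gives \(g=0\). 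So \((a_1,a_2,a)\) is a non-degenerate solution of \(cx_1+cx_2=ny\) with \((k,\ell,j)\) not constant, and the Mann axioms force \(a_1\in\mathbb{Z}\), a contradiction. The same argument shows \(\alpha\notin\mathbb{Z}\).

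Hence your closing step adds nothing, and you send \(\alpha\) to some \(\beta\notin\langle\mathbb{Z},B\rangle\div\). The resulting map is not in \(\mathcal I\). Indeed, \(a_1,a_2\in A\setminus\langle\mathbb{Z},\alpha\rangle\div\), yet \(a_1+a_2=\alpha\neq 0\) lies in \(\langle\mathbb{Z},\alpha\rangle\div\), violating \ref{it:sub4}. The map is not even elementary: \(\alpha\) satisfies \(\exists y_1y_2\,(U_k(y_1)\wedge U_\ell(y_2)\wedge x=y_1+y_2)\) and \(\beta\) does not.

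Note that ``no new element of \(A\) enters \(M''\)'' is true in this example, so predicate preservation is not what breaks; condition \ref{it:sub4} is. Your check of \ref{it:sub4} in Case~2 is only carried out on the \(\mathcal N\) side, where \(\beta\notin\langle N',B\rangle\div\) is available. The ``symmetric'' argument on the \(\mathcal M\) side needs \(\alpha\notin\langle M',A\rangle\div\), which your case hypothesis does not give.

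The repair is to split on whether \(\alpha\in\langle M',A\rangle\div\).
\begin{itemize}
\item If it is, choose finitely many \(a_1,\dots,a_m\in A\setminus M'\) with \(\alpha\in\langle M',a_1,\dots,a_m\rangle\div\), and apply Case~1 to them one at a time. These \(a_i\) typically do not lie in \(\langle M',\alpha\rangle\div\), which is why closing \(\langle M',\alpha\rangle\div\) under \(A\) cannot find them.
\item Only if \(\alpha\notin\langle M',A\rangle\div\) do you invoke Lemma \ref{thm:congruence-outside-subgroup}. Then your \ref{it:sub4} argument works on both sides.
\end{itemize}

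Your Case~1 is essentially the paper's argument, but the extra devices are unnecessary or do not work.
\begin{itemize}
\item The unmodified Lemma \ref{thm:partial-type-realizable} suffices, because \ref{it:sub4} for \(N'\) already controls the \(\mathcal N\) side.
\item Your proposed strengthening cannot be satisfied if read literally: every \(x\in B_\ell\setminus\mathbb{Z}\) equals \(\ell y\) with \(y\in B_\ell\setminus\{x\}\).
\item There is no ``primitive generator'': by \eqref{eq:car1} and \ref{ax:multiplication}, every element of \(A_\ell\setminus\mathbb{Z}\) is \(\ell\) times an element of \(A_\ell\).
\item The \(\ell\)-divisibility pattern you want to mirror is already encoded in \(d_{\mathcal M}(\alpha)=d_{\mathcal N}(\beta)\). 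If \(sa=t\alpha\) with \(s,t\in\ell^{\mathbb{N}}\), then \(s\) divides \(t\beta\) in \(\mathcal N\), and \(\frac{t}{s}\beta\in B_\ell\) follows from \ref{ax:multiplication}.
\end{itemize}
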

\begin{proof}
  We first show that \(\mathcal{I} \ne \emptyset \). We have \(\mathbb{Z}\in \Sub_{\kappa}(\mathcal{M})\) and \(\mathbb{Z}\in \Sub_{\kappa}(\mathcal{N})\), where the properties \ref{it:sub1}--\ref{it:sub3} of Definition \ref{def:sub} are clear, and property \ref{it:sub4} follows from the Mann axioms. Therefore, the identity \(\id \colon \mathbb{Z} \to \mathbb{Z}\) is an element of \(\mathcal{I}\).

  We now give a proof of the forth direction, the back direction is proven analogously. Let \(M'\in \Sub_{\kappa}(\mathcal{M})\) and \(N' \in \Sub_{\kappa}(\mathcal{N})\), and let \(f\colon M' \to N' \) be a partial isomorphism in \(\mathcal{I}\). Let \(\alpha\in M \setminus M'\). By setting \(F(\alpha) = \beta\), we can naturally extend \(f\) to a map \(F\) in the following way:
  \begin{align*}
    F \colon \left\langle M', \alpha \right\rangle\div &\to \left\langle N', \beta\right\rangle\div\\
    \frac{g + k\alpha}{n} &\mapsto \frac{f(g)+k\beta}{n}  \qquad \text{for } g\in M', k,n \in \mathbb{Z}, n>0.
  \end{align*}
  This map is well-defined and surjective as long as \(f(g)+k\beta\) is divisible by \(n\) in \(\mathcal{N}\) if and only if \(g+k\alpha\) is divisible by \(n\) in \(\mathcal{M}\). Since \(f\) is a partial isomorphism, we have \(d_{\mathcal{M}}(g) = d_{\mathcal{N}}(f(g))\), so it would suffice to have \(d_{\mathcal{M}}(\alpha) = d_{\mathcal{N}}(\beta)\). Our goal is to show that we can pick \(\beta\in N \setminus N'\) such that this equality of congruence types holds, and such that the resulting \(F\) is a partial isomorphism in \(\mathcal{I}\). It is clear that \(F\), if well-defined, is a group homomorphism. It is also injective: Let \(a\) be in the kernel of \(F\). Write \(a = \frac{g+k\alpha}{n}\) for some \(g\in M'\) and \(k,n\in \mathbb{Z}\), \(n>0\). Now
    \[F(a) = \frac{f(g)+k\beta}{n} = 0\]
    implies that \(f(g) = -k\beta\). If \(k\) was nonzero, we would get the contradiction \(\beta \in N'\) since \(f(g)\in N'\) and \(N'\) is divisibly closed in \(N\). So \(k=0\), which implies \(f(g)=0\) and thus \(g=0\) since \(f\) is a partial isomorphism. So in total, we have \(a=0\), which shows that \(F\) is injective and thus a group isomorphism.

  For the remainder of our task, we consider three cases:

  \begin{case} \(\alpha\in A_\ell\) for some \(\ell\in L\).

    By Lemma~\ref{thm:partial-type-realizable}, we can find \(\beta\in B_{\ell}\setminus N'\) such that \(d_{\mathcal{M}}(\alpha) = d_{\mathcal{N}}(\beta)\). Let \(a\in \left\langle M', \alpha \right\rangle\div\), which we write as \(a = \frac{g+k\alpha}{n}\) as before. Let \(j\in L\). It remains to show that \(a \in A_{j}\) if and only if \(F(a) \in B_j\). We only need to show one implication, as the other one works analogously by replacing \(F\) with \(F^{-1}\). So suppose \(a\in A_j\). If \(k=0\), then \(a\in M'\), so the claim follows since \(f\) is a partial isomorphism. Now suppose \(k\ne 0\). Then \(k\alpha - na = -g \in M'\), whereas \(\alpha\), \(a\in A \setminus M'\). So by Definition \ref{def:sub}~\ref{it:sub4}, we have \(g=0\) and thus \(na = k\alpha\). Now if \(j \ne \ell\), then the Mann axioms imply that \(a\) and \(\alpha\) are some of the hard-coded solutions in \(j^{\mathbb{N}}\) and \(\ell^{\mathbb{N}}\), which contradicts \(a\), \(\alpha \notin \mathbb{Z}\). So \(j = \ell\), in which case the Mann axioms imply that there are \(s\), \(t\in \ell^{\mathbb{N}}\) such that \(s a = t \alpha\). Therefore, \(F(a) = \frac{t}{s} \beta \in B_{\ell}\) follows from \ref{ax:multiplication}.

    So we have shown that \(F\) is a partial isomorphism. Finally, in order to show that \(F\in \mathcal{I}\), we need to verify that \(\left\langle M', \alpha \right\rangle\div \in \Sub_{\kappa}(\mathcal{M})\). The conditions \ref{it:sub1}--\ref{it:sub3} of Definition \ref{def:sub} are clear. In order to verify condition \ref{it:sub4}, let \(a_1\), \dots, \(a_m\in A \setminus \left\langle M', \alpha \right\rangle\div\) and \(c_1\), \dots, \(c_m\in \mathbb{Z}\), where we can without loss of generality assume \(\sum_{i \in I} c_ia_i \ne 0\) for all non-empty \(I \subseteq \{1, \dots, m\}\). Suppose that \(\sum_{i=1}^m c_i a_i \in \left\langle M', \alpha \right\rangle\div\), so write \(\sum_{i=1}^m c_ia_i = \frac{g + k\alpha}{n}\) for some \(g\in M'\) and \(k\), \(n \in \mathbb{Z}\) with \(n\ne 0\). Rearranging yields
    \[ \left(\sum_{i=1}^m nc_ia_i\right) - k \alpha = g.\]
    Since \(g\in M'\) and \(\alpha \notin M'\), we can apply condition \ref{it:sub4} of Definition \ref{def:sub} to \(M' \in \Sub_{\kappa}(\mathcal{M})\) to obtain \(g=0\), so
    \[\sum_{i=1}^m nc_i a_i = k\alpha.\]
    If \(k\ne 0\), then the Mann axioms imply that for all \(i\in \{1, \dots, m\}\), there are \(s, t \in \mathbb{Z}\) such that \(ta_j = s\alpha\). So we have \(a_i = \frac{s}{t} \alpha \in \left\langle M', \alpha \right\rangle\div \), contradicting our assumptions. Thus, \(k=0\) and \(\sum_{i=1}^m c_ia_i = 0\), as desired.
  \end{case}

  \begin{case} \(\alpha\in \left\langle M', A \right\rangle\div\).

    This case is a simple generalization of the previous case. Pick \(a_{1}\), \dots, \(a_{m} \in A \setminus M'\) such that \(\alpha \in \left\langle M', a_1, \dots, a_{m} \right\rangle\div\). Now we can apply the first case \(m\)-times to \(a_1\), \dots, \(a_m\) to get a partial isomorphism \(F \in \mathcal{I}\) from \(\left\langle M', a_1, \dots, a_{m} \right\rangle\div\) into \(\mathcal{N}\).
  \end{case}

  \begin{case} \(\alpha\notin \left\langle M', A \right\rangle\div\).
    
    By Lemma \ref{thm:congruence-outside-subgroup}, there is \(\beta\in N \setminus \left\langle N',B \right\rangle\div\) with \(d_{\mathcal{M}}(\alpha) = d_{\mathcal{N}}(\beta)\). It remains to show that the resulting \(F\) is in \(\mathcal{I}\). Let \(a\in \left\langle M', \alpha \right\rangle\div\), which we again write as \(a = \frac{g+k\alpha}{n}\). Suppose \(a\in A_{\ell}\). Then if \(k\ne 0\), we get the contradiction \(\alpha = \frac{na-g}{k}\in \left\langle M', A \right\rangle\div\). So \(k=0\), which implies \(a\in M'\) and thus \(F(a) = f(a) \in B_{\ell}\) since \(f\) is a partial isomorphism. The reverse implication that \(F(a)\in B_\ell\) implies \(a\in A_\ell\) again follows similarly by considering \(F^{-1}\).

    It remains to show that \(\left\langle M', \alpha \right\rangle\div \in \Sub_{\kappa}(\mathcal{M})\). The conditions \ref{it:sub1}--\ref{it:sub3} of Definition \ref{def:sub} are again clear. To verify \ref{it:sub4}, let \(a_1\), \dots, \(a_n \in A \setminus \left\langle M', \alpha \right\rangle\div\) and \(c_1\), \dots, \(c_n\in \mathbb{Z}\) be such that \(\sum_{j=1}^m c_ja_j\in \left\langle M',\alpha \right\rangle\div\). This means there are \(g\in M'\) and \(k\), \(n\in \mathbb{Z}\), \(n\ne 0\) such that \(\sum_{j=1}^{m} c_{j}a_{j} = \frac{g+k\alpha}{n}\). Again, \(k \ne 0\) would imply \[\alpha = \frac{-ng + \sum_{j=1}^m c_ja_j}{k}\in \left\langle M',A \right\rangle\div,\] which is a contradiction. So \(k=0\) and thus \(\sum_{j=1}^m c_ja_j \in M'\), which implies \(\sum_{j=1}^m c_ja_j=0\) as \(M'\in \Sub_{\kappa}(\mathcal{M})\).
  \end{case}\vspace{-1.8em}
\end{proof}

\begin{corollary} \label{thm:complete}
  The theory \(T(L)\) is complete.
\end{corollary}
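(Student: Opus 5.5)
To show that \(T(L)\) is complete, I will derive it from Proposition~\ref{thm:back-and-forth-system} using the standard back-and-forth criterion for elementary equivalence. Suppose \(\mathcal{M}, \mathcal{N} \models T(L)\) are arbitrary. The first step is to replace them by elementary extensions \(\mathcal{M}^*\succeq \mathcal{M}\) and \(\mathcal{N}^*\succeq\mathcal{N}\) that are \(\kappa\)-saturated for some uncountable \(\kappa\), say \(\kappa = \aleph_1\). Such extensions always exist, for instance as suitable ultrapowers or via the existence of saturated elementary extensions. They are again models of \(T(L)\). Since \(\mathcal{M}\equiv\mathcal{M}^*\) and \(\mathcal{N}\equiv\mathcal{N}^*\), it suffices to prove \(\mathcal{M}^*\equiv\mathcal{N}^*\).

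By Proposition~\ref{thm:back-and-forth-system}, the set \(\mathcal{I}\) of partial isomorphisms between members of \(\Sub_{\kappa}(\mathcal{M}^*)\) and \(\Sub_\kappa(\mathcal{N}^*)\) is a non-empty back-and-forth system. Here a partial isomorphism is understood with respect to the full language \(\mathcal{L}\). Concretely, it is a group isomorphism between substructures that preserves \(0\), \(1\) and each \(U_\ell\) in both directions. By the standard theorem, the existence of a non-empty back-and-forth system gives \(\mathcal{M}^* \equiv_{\infty\omega} \mathcal{N}^*\), and in particular elementary equivalence.

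One point needs care: the back-and-forth theorem requires the system to extend to arbitrary single elements. The proof of Proposition~\ref{thm:back-and-forth-system} does this, covering every \(\alpha \in M\setminus M'\) through its three cases, and symmetrically in the other direction. Conversely, if \(\alpha\in M'\) there is nothing to extend.

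It is then also worth checking that the relations \(D_n\), which are not in \(\mathcal{L}\), cause no issue. Their preservation was already built into the construction through the matching of congruence types together with divisible closedness. In any case, \(D_n\) is \(\mathcal{L}\)-definable, so it plays no role in the criterion.

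Since \(\mathcal{M}\) and \(\mathcal{N}\) were arbitrary, any two models of \(T(L)\) are elementarily equivalent. Together with the consistency of \(T(L)\), witnessed by the standard model \((\mathbb{Z},+,-,0,1,(\ell^{\mathbb{N}})_{\ell\in L})\), this means \(T(L)\) is complete, and indeed \(T(L)\) axiomatizes \(\Th(\mathbb{Z},+,-,0,1,(\ell^{\mathbb{N}})_{\ell\in L})\).

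The proof has no real obstacle. The substantive work is contained in Proposition~\ref{thm:back-and-forth-system}, and this corollary only assembles the pieces.
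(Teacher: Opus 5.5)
Your proposal is correct and takes the same route as the paper. You pass to \(\kappa\)-saturated elementary extensions, use Proposition~\ref{thm:back-and-forth-system} to get a non-empty back-and-forth system and hence elementary equivalence, and then transfer back to the original models. Your additional remarks are accurate and just make explicit what the paper leaves implicit: the \(D_n\) are automatically preserved because the subgroups are divisibly closed, and consistency is witnessed by the standard model.
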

\begin{proof}
  Let \(\mathcal{M}\) and \(\mathcal{N}\) be \(\kappa\)-saturated models of \(T(L)\). By Proposition \ref{thm:back-and-forth-system}, there is a back-and-forth system between \(\mathcal{M}\) and \(\mathcal{N}\), which implies that \(\mathcal{M}\) and \(\mathcal{N}\) are elementary equivalent.

  Now let \(\mathcal{M}'\), \(\mathcal{N}' \models T(L)\) be arbitrary. Then there exist \(\kappa\)-saturated \(\mathcal{M}, \mathcal{N} \models T(L)\) such that \(\mathcal{M}'\) is an elementary substructure of \(\mathcal{M}\) and \(\mathcal{N}'\) is an elementary substructure of \(\mathcal{N}\). Since \(\mathcal{M}\) and \(\mathcal{N}\) are elementary equivalent, so are \(\mathcal{M}'\) and \(\mathcal{N}'\).
\end{proof}

\begin{corollary} \label{thm:decidable}
  Let \(k\), \(\ell\in \mathbb{Z}_{\ge 2}\) be multiplicatively independent. Then the structure \((\mathbb{Z},+,k^{\mathbb{N}}, \ell^{\mathbb{N}})\) is decidable.
\end{corollary}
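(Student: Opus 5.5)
The plan is the standard one: a complete, recursively enumerable theory is decidable. By Corollary \ref{thm:complete}, \(T(\{k,\ell\})\) is complete. The standard structure \((\mathbb{Z},+,-,0,1,k^{\mathbb{N}},\ell^{\mathbb{N}})\) is a model of it, so \(T(\{k,\ell\})\) axiomatizes its full first-order theory. Hence it suffices to show that the axiom set \(T(\{k,\ell\})\) is computable. Given a sentence \(\varphi\), we then enumerate all consequences of the axioms until either \(\varphi\) or \(\neg\varphi\) appears. By completeness one of them eventually does.

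Computability is checked schema by schema in Definition \ref{def:theory-without-order}.
\begin{enumerate}
\item The axioms (A1) for torsion-free abelian groups with \(0\ne 1\) form a computable list, one torsion-freeness axiom for each \(n\).
\item The congruence axioms \ref{ax:congruence} are explicit formulas indexed by \(n\ge 2\).
\item The axioms \ref{ax:multiplication} consist of just two sentences, one for \(k\) and one for \(\ell\).
\item The Carmichael axioms are computable by the remark after Definition \ref{def:carmichael-axioms}. The axioms \eqref{eq:car1} are explicit. For \eqref{eq:car2}, we enumerate the \(n\) coprime to the base, compute \(\lambda(n)\) from the factorization of \(n\), and compute the finite set \(E\) by evaluating powers modulo \(n\).
\end{enumerate}

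The only nontrivial schema is \(\Mann(\{k,\ell\})\). Its axioms are indexed by \(n\ge 1\), by \(\overline{a}\in(\mathbb{Z}\setminus\{0\})^n\), by \(b\ne 0\), and by \(\overline{\ell}\in\{k,\ell\}^{n+1}\), and this index set is computably enumerable. To write down \(\Mann(\overline{a},b,\overline{\ell})\) we need the finite list of non-degenerate solutions. These are integral solutions when the \(\ell_i\) are not all equal, and primitive solutions when they are all equal. Lemma \ref{thm:nondegenerate-decidability}, which is based on Karimov et al.\ via Lemma \ref{thm:diophantine-decidability}, computes exactly these lists in the respective cases \ref{item:integral} and \ref{item:rational}. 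Substituting the lists into Definition \ref{def:mann-axioms} gives the axioms effectively.

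All the difficulty is in the computability of the Mann axioms, and that has already been settled by Lemma \ref{thm:nondegenerate-decidability}. The remaining argument only needs care on two points. First, the finiteness of the lists relies on Lemma \ref{thm:still-fin-many-sol} and the Mann property, but no a priori bound on their size is needed, because the algorithm outputs the lists directly. Second, the union of computable schemata is computable, indeed recursively enumerable, and recursive enumerability is all the argument requires.
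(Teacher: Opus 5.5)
Your proposal is correct and follows the paper's own proof: completeness of \(T(\{k,\ell\})\) comes from Corollary \ref{thm:complete}, and computability of the axioms comes from Section \ref{ch:axioms}, where the only nontrivial schema \(\Mann(\{k,\ell\})\) is handled by Lemma \ref{thm:nondegenerate-decidability}. Your schema-by-schema check and the explicit enumeration argument spell out what the paper's three-line proof leaves implicit.
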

\begin{proof}
  By Corollary \ref{thm:complete}, the theory \(T(\{k, \ell\})\) completely axiomatizes the theory of \((\mathbb{Z},+,-,0,1,k^{\mathbb{N}}, \ell^{\mathbb{N}})\). As seen in Section \ref{ch:axioms} (Lemma \ref{thm:nondegenerate-decidability} in particular), the theory \(T(\{k, \ell\})\) is also computable. Therefore, \((\mathbb{Z},+,-,0,1,k^{\mathbb{N}}, \ell^{\mathbb{N}})\) is decidable.
\end{proof}

\subsection{Quantifier Elimination}
Let \(\Sigma\) be the set of all boolean combinations of \(\mathcal{L}\)-formulas of the form
\[\exists y \left( \bigwedge_{i=1}^n U_{\ell_i}(y_i) \wedge \varphi(x,y)\right),\]
where \(x\) and \(y = (y_1, \dots, y_n)\) are tuples of variables, \(\ell_1\), \dots, \(\ell_n \in L\), and \(\varphi(x,y)\) is a quantifier-free formula in the language \(\{+,-,0,1\} \cup \{D_k : k \in \mathbb{N}\}\).

\begin{theorem} \label{thm:qe}
  Modulo \(T(L)\), every \(\mathcal{L}\)-formula is equivalent a formula in~\(\Sigma\).
\end{theorem}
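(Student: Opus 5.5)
We would prove Theorem \ref{thm:qe} by the standard saturation criterion for relative quantifier elimination. It suffices to show the following: whenever \(\mathcal{M}\), \(\mathcal{N}\) are \(\kappa\)-saturated models of \(T(L)\) and \(a\in M^n\), \(b\in N^n\) satisfy the same formulas from \(\Sigma\), then \(\operatorname{tp}(a)=\operatorname{tp}(b)\). Since \(T(L)\) is complete by Corollary \ref{thm:complete}, the usual compactness argument then shows that every formula is equivalent modulo \(T(L)\) to a Boolean combination of formulas in \(\Sigma\). To obtain equality of types we use the back-and-forth system \(\mathcal{I}\) of Proposition \ref{thm:back-and-forth-system}. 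The task therefore reduces to finding \(M'\in\Sub_\kappa(\mathcal{M})\) containing \(a\), \(N'\in\Sub_\kappa(\mathcal{N})\) containing \(b\), and a partial isomorphism \(f\colon M'\to N'\) in \(\mathcal{I}\) with \(f(a)=b\).

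\textbf{Constructing \(M'\).} The natural choice is to first pass to a countable subset \(A_0\subseteq A\) over which \(a\) becomes "algebraic modulo powers". Concretely, choose \(\alpha_1,\dots,\alpha_r\in A\setminus\mathbb{Z}\) such that \(\langle \mathbb{Z},\alpha_1,\dots,\alpha_r\rangle\div\) contains all the relations between \(a\) and \(A\). More precisely, pick the \(\alpha_i\) so that \(\langle \mathbb{Z},\alpha\rangle\div\cap\langle\mathbb{Z},a\rangle\div\) equals \(\langle\mathbb{Z},A\rangle\div\cap\langle\mathbb{Z},a\rangle\div\); this is possible because the latter is a finitely generated group modulo purity. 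Then set
\[M'=\langle \mathbb{Z},a,\alpha\rangle\div,\]
taking \(\alpha\) minimal or otherwise adjusted so that condition \ref{it:sub4} holds. Verifying \ref{it:sub4} is where the Mann axioms enter. If some \(\sum c_ia_i\) with \(a_i\in A\setminus M'\) lands in \(M'\) and is nonzero, we can pass to a non-degenerate subsum and enlarge \(\alpha\) by the \(a_i\). The Mann axioms guarantee that this process terminates: each new power is forced to be a rational multiple of an old one in the same \(A_\ell\), or to be an integer. This mirrors the verification of \ref{it:sub4} in Cases 1 and 3 of Proposition \ref{thm:back-and-forth-system}.

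\textbf{Finding \(\beta\) and \(f\).} The key point is that all the relevant data is expressed by \(\Sigma\)-formulas. Consider the formula \(\exists y\,(\bigwedge U_{\ell_i}(y_i)\wedge \varphi(x,y))\), where \(\varphi\) records the linear equations linking \(a\) and \(\alpha\), the non-degeneracy conditions, and finitely many congruence conditions \(D_{p^m}\) on \(\alpha\) and on combinations \((g+k\alpha)\). Such a formula lies in \(\Sigma\), so \(b\) satisfies it. By saturation we can realize the type of \(\alpha\) over \(a\) restricted to \(\Sigma\)-formulas of this shape; this is a countable collection of existential \(\Sigma\)-formulas, and it is finitely consistent because \(b\) satisfies each finite conjunction. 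This yields \(\beta\in B\) with matching equations, congruence types and membership in each \(B_\ell\). Define \(f\) by \((g+k\cdot(a,\alpha))/n\mapsto (g+k\cdot(b,\beta))/n\). Well-definedness and injectivity follow exactly as in the proof of Proposition \ref{thm:back-and-forth-system}, since congruence types are preserved.

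\textbf{Main obstacle.} The hard step is showing that \(f\) preserves each \(U_\ell\) and that its image \(N'\) also satisfies \ref{it:sub4}. For elements of \(M'\) in \(A_\ell\), the \(\Sigma\)-formulas already recorded these memberships, so they transfer. The converse needs non-membership of the remaining elements, and this must be encoded by negations of \(\Sigma\)-formulas. The plan is to use negated \(\Sigma\)-formulas of the form "there are no \(y_i\in U_{\ell_i}\) with \(\sum c_iy_i\) equal to a given combination of \(x\) and the \(\alpha\)-variables". Saturation applied to the combined type (positive existential part plus these negated parts) should produce a \(\beta\) for which \(N'\) satisfies \ref{it:sub4}. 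I expect verifying the finite consistency of this combined type to be the delicate part. Once this is done, Proposition \ref{thm:back-and-forth-system} gives \(\operatorname{tp}(a)=\operatorname{tp}(b)\).
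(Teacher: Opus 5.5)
Your reduction to the back-and-forth system $\mathcal{I}$ of Proposition~\ref{thm:back-and-forth-system} matches the paper's. However, there is a genuine gap exactly at the step you call the main obstacle, and the remedy you sketch does not work as stated.

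You propose putting into the type of $\beta$ formulas $\neg\exists y'\,(\bigwedge_i U_{\ell_i}(y'_i)\wedge\psi(b,y,y'))$ in which the witness variables $y$ occur. Finite satisfiability of that type would require
$\mathcal{N}\models\exists y\,\bigl(\theta(y)\wedge\varphi(b,y)\wedge\neg\exists y'\,(\bigwedge_i U_{\ell_i}(y'_i)\wedge\psi(b,y,y'))\bigr)$.
This formula is not a Boolean combination of formulas in $\Sigma$. So the hypothesis that $a$ and $b$ satisfy the same $\Sigma$-formulas gives you no way to transfer it from $\mathcal{M}$. As a result, you have established neither the implication $f(g)\in B_\ell\Rightarrow g\in A_\ell$ nor condition~\ref{it:sub4} of Definition~\ref{def:sub} for $N'$, and hence not $f\in\mathcal{I}$.

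The missing idea is to arrange that all negative information concerns the original tuple alone, where it transfers for free because $\Sigma$ is closed under negation. The witnesses then only need positive quantifier-free conditions.

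\textbf{The paper's route (in your notation).} Reorder so that $a_1,\dots,a_r$ is a maximal subtuple linearly independent over $A$; this is a set of negated $\Sigma$-formulas in $x$ alone. Then choose powers $\alpha$ with $1,a_1,\dots,a_r,\alpha$ linearly independent and $a_{r+1},\dots,a_n\in\langle 1,a_1,\dots,a_r,\alpha\rangle\div$. Now $\beta$ only needs to realize $d_{\mathcal{M}}(\alpha)$, the defining equations, and the inequations $z+\sum_i s_ib_i+\sum_k t_ky_k\neq 0$. Preservation of $U_\ell$ and condition~\ref{it:sub4} on both sides then follow from the Mann axioms together with linear independence over $A$, resp.\ $B$.

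\textbf{Closing your own set-up.} Consider the lattice $V=\{(u,z)\in\mathbb{Z}^{n+1}: u\cdot a+z\in\langle A\rangle\div\}$. It is determined by the $\Sigma$-type of $a$, so it coincides with the corresponding lattice for $b$. Hence if $\beta$ positively witnesses $w_j(u_j\cdot b+z_j)=\sum_k c_{jk}\beta_k$ for a basis $(u_j,z_j)$ of $V$, the inclusion $\langle\mathbb{Z},b\rangle\div\cap\langle B\rangle\div\subseteq\langle\mathbb{Z},\beta\rangle\div$ holds automatically. This inclusion, its $\mathcal{M}$-counterpart, and the Mann axioms give condition~\ref{it:sub4} and both directions of $U_\ell$-preservation directly. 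You need no negated conditions on $y$ and no iterative enlargement of $\alpha$.

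Finally, $(a,\alpha)$ is linearly dependent in your set-up. So you must include all linear inequations among the conditions on $(b,\beta)$, not only the non-degeneracy ones, to get injectivity of $f$.
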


\begin{proof}
  Let \(\kappa\) be an uncountable cardinal, let \((\mathcal{M},(A_{\ell})_{\ell\in L})\), \((\mathcal{N}, (B_{\ell})_{\ell\in L})\) be \(\kappa\)-saturated models of \(T(L)\), and let \(\alpha = (\alpha_1, \dots, \alpha_m) \in M^m\) and \(\beta = (\beta_1, \dots, \beta_m) \in N^m\) be tuples that satisfy the same formulas from \(\Sigma\). We need to show that \(\alpha\) and \(\beta\) have the same parameter-free type. By Proposition \ref{thm:back-and-forth-system}, it suffices to find \(M' \in \Sub_{\kappa}(\mathcal{M})\), \(N' \in \Sub_{\kappa}(\mathcal{N})\) and a partial isomorphism \(f \colon M' \to N' \) such that \(\alpha_i \in M'\), \(\beta_i \in  N'\) and \(f(\alpha_i) = \beta_i\) for every \(i \in \{1, \dots, m\}\).

  By re-ordering the tuples \(\alpha\) and \(\beta\), we may assume that the first \(r\) coordinates \(\{\alpha_1, \dots, \alpha_r\}\) form a maximal subset of \(\{\alpha_1, \dots, \alpha_m\}\) that is linearly independent over \(A\), where \(r \in \{0, \dots, m\}\). Now \(\{\beta_1, \dots, \beta_r\}\) is a maximal subset of \(\{\beta_1, \dots, \beta_n\}\) that is linearly independent over \(B\), as this can be expressed using formulas in \(\Sigma\). Let \(a = (a_1, \dots, a_n) \in A^n\) be such that
  \[1, \alpha_1, \dots, \alpha_r, a_1, \dots, a_n \text{ are linearly independent}\]
  and for each \(i \in \{r+1, \dots, m\}\), we have
  \[\alpha_i = \frac{z_i + c_{i1}\alpha_1 + \dots + c_{ir}\alpha_r + d_{i1}a_1 + \dots d_{i n}a_n}{e_i},\]
  where \(z_i\), \(c_{i1}\), \dots, \(c_{ir}\), \(d_{i1}\), \dots, \(d_{i n} \in \mathbb{Z}\), and \(e_i \in \mathbb{Z} \setminus \{0\}\). 
  Let \(p(y)\) be the type consisting of \(d_{\mathcal{M}}(a)\) together with the formulas
  \begin{align*}
    \beta_i &= \frac{z_i+c_{i1}\beta_1 + \dots + c_{ir}\beta_r + d_{i1}y_1 + \dots d_{i n}y_n}{e_i} \qquad \text{and}\\
    0 &\ne z+s_1\beta_1 + \dots + s_r\beta_r + t_1y_1+ \dots + t_ny_n
  \end{align*}
  for every \(i \in \{r+1, \dots, m\}\) and every \((z,s, t) \in \mathbb{Z}^{1+r+n} \setminus \{0\}\).
  Furthermore, let \(\ell_1\), \dots, \(\ell_n \in L\) such that \(a_i \in A_{\ell_i}\) for every \(i \in \{1, \dots, n\}\), and let \(\theta(y)\) be the formula \(\bigwedge_{i=1}^n U_{\ell_i}(y_i)\). Now if \(\varphi(\beta, y)\) is a conjunction of finitely many formulas in \(p(y)\), then the formula \(\psi(x)\) defined as
  \[\exists y \big( \theta(y) \wedge \varphi(x,y)\big) \]
  is in \(\Sigma\). So since \(\mathcal{M} \models \varphi(\alpha,a)\), we have \(\mathcal{M} \models \psi(\alpha)\) and thus \(\mathcal{N} \models \psi(\beta)\). This shows that the type \(p(y) \cup \{\theta(y)\}\) is finitely satisfiable in \(\mathcal{N}\). By saturation, it has a realization \(b = (b_1, \dots, b_n) \in B^n\).

  Now let 
  \begin{align*}
    M' &:= \left\langle 1, \alpha_1, \dots, \alpha_r, a_1, \dots, a_n \right\rangle\div,\\
    N' &:= \left\langle 1, \beta_1, \dots, \beta_r, b_1, \dots, b_n \right\rangle\div,
  \end{align*}
  and define the group homomorphism \(f \colon M' \to N'\) by \(f(1)=1\), \(f(\alpha_i) = \beta_i\) for \(i \in \{1, \dots, r\}\) and \(f(a_i) = b_i\) for \(i \in \{1, \dots, n\}\). Since \(b\) satisfies \(p(y)\), we also have
  \[\beta_i = \frac{z_i+c_{i1}\beta_1 + \dots + c_{ir}\beta_r + d_{i1}b_1 + \dots d_{i n}b_n}{e_i}\]
  and thus \(f(\alpha_i)=\beta_i\) for \(i \in \{r+1, \dots, m\}\). Since both \[(1, \alpha_1, \dots, \alpha_r, a_1, \dots, a_n) \quad\text{and}\quad (1,\beta_1, \dots, \beta_r, b_1, \dots, b_n)\] are linearly independent, \(f\) is a group isomorphism. We now show that \(f\) is also an \(\mathcal{L}\)-homomorphism. Let \(g \in M'\), which we write as
  \[g = \frac{z+u_1\alpha_1 + \dots + u_r\alpha_r + v_1a_1 + \dots + v_na_n}{w}\]
  with \(z\), \(u_1\), \dots, \(u_r\), \(v_1\), \dots, \(v_n \in \mathbb{Z}\) and \(w \in \mathbb{Z} \setminus \{0\}\). If \(g \in A_{\ell}\) for some \(\ell \in L\), then
  \[u_1\alpha_1 + \dots + u_r\alpha_r = wg - z - v_1a_1 - \dots - v_na_n \in \left\langle A \right\rangle.\]
  Since \(\alpha_1\), \dots, \(\alpha_r\) are linearly independent over \(A\), we have \(u_1 = \dots = u_r = 0\) and thus \(wg = z+v_1a_1 + \dots + v_na_n\). If \(g \in \mathbb{Z}\), then \(f(g) = g \in B_{\ell}\), so we are done. Otherwise, we must have \(v_i \ne 0\) for some \(i \in \{1, \dots, n\}\). Applying the Mann axioms yields \(g = \ell^t a_i\) for some \(t \in \mathbb{Z}\) and thus \(f(g) = \ell^t b_i\). Hence, \ref{ax:multiplication} implies \(g \in B_l\), as desired. The opposite implication follows analogously by considering \(f^{-1}\).

  Finally, we show that \(M' \in \Sub_{\kappa}(\mathcal{M})\). The first three properties of Definition~\ref{def:sub} are clear. Proving \ref{it:sub4} works similarly as in the previous argument: If \(g_1\), \dots, \(g_k \in A \setminus M'\) and \(s_1\), \dots, \(s_k \in \mathbb{Z}\), then \(s_1g_1 + \dots + s_kg_k \in M'\) means that
  \[s_1g_1 + \dots + s_kg_k = \frac{z+u_1\alpha_1 + \dots + u_r\alpha_r + v_1a_1 + \dots + v_na_n}{w}\]
  for some \(z\), \(u_1\), \dots, \(u_r\), \(v_1\), \dots, \(v_n \in \mathbb{Z}\) and \(w \in \mathbb{Z} \setminus \{0\}\). Since \(\alpha_1\), \dots, \(\alpha_r\) are linearly independent over \(A\), we again obtain \(u_1 = \dots = u_r = 0\). If \(s_j \ne 0\), then applying the Mann axioms yields \(g_j \in M'\). So we must have \(s_j = 0\) for all \(j \in \{1, \dots, k\}\). One analogously shows \(N' \in \Sub_{\kappa}(\mathcal{N})\), which finishes the proof.
\end{proof}

\begin{corollary} \label{thm:superstable}
  The theory of \((\mathbb{Z},+,(\ell^{\mathbb{N}})_{\ell \ge 2})\) is superstable.
\end{corollary}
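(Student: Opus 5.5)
The plan is to reduce superstability to counting types, using the relative quantifier elimination of Theorem \ref{thm:qe}. First I reduce to pairwise multiplicatively independent sets. Choose \(L \subseteq \mathbb{Z}_{\ge 2}\) containing exactly one representative from each class of multiplicative dependence, for instance the elements that are not proper powers. By Lemma \ref{thm:dependence}, every \(k^{\mathbb{N}}\) is \(\emptyset\)-definable in \((\mathbb{Z},+,\ell^{\mathbb{N}})\) for the representative \(\ell\) of its class. Hence \((\mathbb{Z},+,(\ell^{\mathbb{N}})_{\ell\ge 2})\) is a definitional expansion of \((\mathbb{Z},+,(\ell^{\mathbb{N}})_{\ell\in L})\), and superstability transfers. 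So it suffices to show that \(T(L)\) is superstable, that is, stable in every \(\lambda \ge 2^{|T|} = 2^{\aleph_0}\).

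Next, fix a model \(\mathcal{M} \models T(L)\) and a parameter set \(C \subseteq M\) with \(|C| = \lambda \ge 2^{\aleph_0}\). Let \(\alpha\) be a single element of an elementary extension \(\mathcal{N}\). By Theorem \ref{thm:qe}, \(\tp(\alpha/C)\) is determined by the \(\Sigma\)-formulas it contains, i.e.\ by which formulas \(\exists y(\bigwedge U_{\ell_i}(y_i) \wedge \varphi(x,c,y))\) hold, with \(\varphi\) quantifier-free in the Presburger language without order. I will follow the proof of Theorem \ref{thm:qe} to describe this data explicitly. Either \(\alpha\) is linearly independent over \(\langle C, B\rangle\), where \(B\) is the union of the power predicates in \(\mathcal{N}\); or \(\alpha = (z + \sum_j c'_j g_j + \sum_i d_i b_i)/e\) with \(g_j\) in \(\langle C\rangle\), \(b_i \in B\), and \(b\) linearly independent over \(\langle C\rangle\).

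In the first case, the argument from Case 3 of Proposition \ref{thm:back-and-forth-system}, relativized to \(C\), should show that the type is determined by the congruence type \(d(\alpha)\) together with the congruence types of the elements \(\alpha - c\), which are fixed once \(d(\alpha)\) is fixed. There are at most \(2^{\aleph_0}\) such types.

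In the second case, the type is determined by the following data: the element \(z + \sum c'_j g_j\) of \(\langle C\rangle\) (at most \(\lambda\) choices); the integers \(e, d_i\) and the labels \(\ell_i\) (countably many choices); and the joint quantifier-free type of \(b\) over \(\langle C,\alpha\rangle\). For the last item I would use Case 1 of Proposition \ref{thm:back-and-forth-system}. There, elements of \(B\) that are independent from the current divisibly closed subgroup are interchangeable as long as they have the same congruence type, and otherwise they are of the form \(\frac{t}{s}\cdot\) an element already present. This gives at most \(2^{\aleph_0}\) choices for the tuple.

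Altogether \(|S_1(C)| \le \lambda \cdot 2^{\aleph_0} = \lambda\), so \(T(L)\) is \(\lambda\)-stable for all \(\lambda \ge 2^{\aleph_0}\) and hence superstable.

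The main obstacle is the second case: making precise that a realization \(b\) of the \(\Sigma\)-data is unique up to the back-and-forth. The subtlety is that \(\langle C\rangle\div\) need not belong to \(\Sub_\kappa\), because condition \ref{it:sub4} can fail for an arbitrary \(C\). To handle this, I would first close \(C\) up. I would enlarge \(C\) to \(C' = \langle C, A_0\rangle\div\), where \(A_0\) is a set of elements of the power predicates of \(\mathcal{M}\) witnessing all the linear relations needed, chosen so that \(|C'| = \lambda\) and \(C'\) satisfies \ref{it:sub4}. Counting types over \(C'\) bounds the number of types over \(C\). Then the counting can be done cleanly via the partial isomorphisms of Proposition \ref{thm:back-and-forth-system} fixing \(C'\) pointwise.
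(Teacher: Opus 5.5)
Your proposal is correct. It rests on the same dichotomy the paper uses: either $\alpha$ is independent over the base together with $B$, and only $d(\alpha)$ matters ($2^{\aleph_0}$ options), or $\alpha=(g+\sum_i d_ib_i)/e$ with $g$ in the base, and the data ranges over a set of size $\lambda$.

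Where you differ is the justification. The paper works over a model $\mathcal{M}$ and quotes Theorem~\ref{thm:qe}. It then asserts that $\tp(\alpha/M)$ is determined by $d(\alpha)$ and the set of representations $(u,\overline{c},d,\overline{\ell})$, with one $u$ per $(\overline{c},d,\overline{\ell})$ fixing the rest. That route never has to enlarge the base, because Theorem~\ref{thm:qe} concerns parameter-free types of finite tuples and builds the needed subgroup in $\Sub_\kappa$ inside its own proof. The price is that the passage from arbitrary $\Sigma$-formulas to this representation data is given without detail.

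You instead apply Proposition~\ref{thm:back-and-forth-system} directly to partial isomorphisms fixing an enlarged base $C'\in\Sub_\kappa$. You use Case~3 when $\alpha$ is independent, and iterated Case~1 when $\alpha\in\langle C',b_1,\dots,b_n\rangle\div$ with $b_1,\dots,b_n$ independent over $C'$. This proves the determination of types outright. It also makes you record the congruence types of the $b_i$, which are genuinely needed (they are not determined by $d(\alpha)$ in general) and cost only a factor $2^{\aleph_0}\le\lambda$.

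The one step you leave vague is enlarging $C$ to $C'$. Do not add witnesses for every relation $\sum c_ia_i\in C'$; keeping $|C'|=\lambda$ that way would require knowing that such solution sets are finite. Instead, do it as in the proof of Theorem~\ref{thm:qe}:
\begin{itemize}
\item take $C_0\subseteq C$ maximal linearly independent over $\langle B\rangle$;
\item let $A_0\subseteq B$ consist of the finitely many elements needed to express each $c\in C$ over $C_0$;
\item put $C'=\langle C_0,A_0\rangle\div$.
\end{itemize}
Then $|C'|\le\lambda$ and $\langle C',B\rangle\div=\langle C,B\rangle\div$. Condition~\ref{it:sub4} holds as well. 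Suppose $\sum_j s_jg_j\in C'$ with $g_j\in B\setminus C'$. Independence of $C_0$ eliminates the $C_0$-coefficients. Split the remaining relation among elements of $B$ into a nondegenerate part with nonzero integer sum and minimal zero-sum parts. The Mann axioms show that the first part consists of integers, and that each zero-sum part meeting $C'$ lies in $C'$. Hence the $g_j$ occur only in zero-sum parts, so $\sum_j s_jg_j=0$.
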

\begin{proof}
  By Lemma \ref{thm:dependence} and Corollary \ref{thm:complete}, it suffices to show that \(T(L)\) is superstable for a maximal \(L \subseteq \mathbb{Z}_{\ge 2}\) whose elements are pairwise multiplicatively independent. Let \(\kappa \ge 2^{\aleph_0}\) and let \((\mathcal{M}, (A_{\ell})_{\ell \in L}) \models T(L)\) with \(|M| = \kappa\). We need to show that there are only \(\kappa\) complete \(1\)-types over \(\mathcal{M}\). Let \((\mathcal{N}, (B_{\ell})_{\ell \in L})\) be a \(\kappa^+\)-saturated elementary extension of \(\mathcal{M}\), and let \(\alpha \in N\). By Theorem \ref{thm:qe}, the type \(\tp(\alpha/M)\) is completely determined by \(d_{\mathcal{N}}(\alpha) = d_{\mathcal{M}}(\alpha)\) together with the set of all tuples \((u, \overline{c}, d, \overline{\ell})\) such that \(u \in M\), \(n \in \mathbb{N}\), \(\overline{c} = (c_1, \dots, c_n) \in \mathbb{Z}^n\), \(d \in \mathbb{Z} \setminus \{0\}\), \(\overline{\ell} = (\ell_1, \dots, \ell_n) \in L^n\) and
  \begin{equation}
  \label{eq:5}
  \exists b_1 \in B_{\ell_1}, \dots, b_n \in B_{\ell_n} \left(\alpha = \frac{u + c_1b_1 + \dots + c_nb_n}{d}\right).
  \end{equation}
  Fix such \(u\), \(\overline{c}\), \(d\) and \(\overline{\ell}\). Now for every \(v \in M\), we have
  \[\exists b_1' \in B_{\ell_1}, \dots, b_n' \in B_{\ell_n} \left(\alpha = \frac{v + c_1b_1' + \dots + c_nb_n'}{d}\right) \]
  if and only if
  \[\exists b_1, b_1' \in B_{\ell_1}, \dots, b_n, b_n' \in B_{\ell_n} \Bigl(v-u = c_1(b_1-b_1') + \dots + c_n(b_n - b_n')\Bigr).\]
  This shows that for given \((\overline{c}, d, \overline{\ell})\), one \(u \in M\) that satisfies \eqref{eq:5} already specifies all such \(u\). Therefore, the number of complete \(1\)-types over \(\mathcal{M}\) is at most the number of congruence types plus the number of \(u \in M\) times the number of tuples \((\overline{c}, d, \overline{\ell})\) as above, so at most
  \[2^{\aleph_0} + \kappa \cdot \aleph_{0} = \kappa. \qedhere\]
\end{proof}

\section{Universal Theory With Order}
\label{ch:ordered}

Throughout this section, let \(L \subseteq \mathbb{Z}_{\ge 2}\) be a set of pairwise multiplicatively independent integers, and let \(\mathcal{Z} := (\mathbb{Z},+,-,0,1,<,(\ell^{\mathbb{N}})_{\ell\in L})\).
\begin{definition} \label{def:theory-with-order}
  Let \(T_{\forall}(L)\) be the \(\mathcal{L}_{<}\)-theory containing the following axiom schemata:
  \begin{enumerate}[label=$(\forall\mkern1mu \arabic*)$, start=1]
    \item axioms of ordered abelian groups, \label{ax:oag} 
    \item the universal congruence axioms
      \[\forall x \bigvee_{j=1}^n \bigwedge_{\substack{k=1 \\k\ne j}}^{n} \neg D_n(x+k) \qquad \text{ for every } n\ge 2,\] \label{ax:universal-congruence} \vspace{-2mm}
    \item \(U_{\ell}(1) \wedge \forall x (U_\ell(x) \leftrightarrow U_\ell(\ell\cdot x))\) for each \(\ell\in L\), \label{ax:universal-multiplication}
    \item the discreteness axiom \(0 < 1 \wedge \forall x \left(x\le 0 \vee x \ge 1\right) \), \label{ax:discrete}
    \item the Mann axioms \(\Mann(L)\) (see Definition \ref{def:mann-axioms}),\label{ax:universal-mann} 
    \item the inequality axioms \(\Inequ(L)\) (Definition \ref{def:inequality-axioms}) for \(|L| \ge 3\), and the basic inequality axioms \(\BInequ(L)\) (Definition \ref{def:basic-ineq-axioms}) for \(|L| \le 2\).\label{ax:inequality} 
    \item the Carmichael axioms \(\Carmichael(L)\) for \(|L| \le 2\) (see Definition \ref{def:carmichael-axioms}),\label{ax:universal-carmichael} 
  \end{enumerate}
\end{definition}

Notice that all the axioms of \(T_{\forall }(L)\) are universal and that \(\mathcal{Z} \models T_{\forall }(L)\). Our goal is now to show that \(T_{\forall }(L)\) completely axiomatizes the universal theory of \(\mathcal{Z}\). To do this, we need to show that for every \(\mathcal{M} \models T_{\forall}(L)\), there exists a model \(\mathcal{N} \models \Th(\mathcal{Z})\) and an \(\mathcal{L}_{<}\)-embedding \(\mathcal{M} \hookrightarrow \mathcal{N}\).

\subsection{Solving Inequalities}
\label{sec:solving-inequalities}
We now show that our axioms \(T_{\forall }(L)\) imply \(\Inequ(L)\) even for \(|L| \le 2\). 

\begin{fact}[Kronecker's Approximation Theorem] \label{thm:kronecker} 
  Let \(\lambda \in \mathbb{R}\) be irrational. Then the set
  \[\bigl\{ \fr(n\lambda) : n \in  \mathbb{N}\bigr\} \]
  is dense in \([0,1)\), where \(\fr(x) := x - \left\lfloor x \right\rfloor \) denotes the fractional part of \(x \in \mathbb{R}\). 
\end{fact}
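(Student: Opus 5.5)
The plan is to derive the density claim from the structure of the additive subgroup \(G := \mathbb{Z} + \lambda\mathbb{Z}\) of \(\mathbb{R}\), and then pass from integer to natural multiples by using the fractional-part map and compactness of the circle \(\mathbb{R}/\mathbb{Z}\).

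\emph{Step 1: \(G\) is dense in \(\mathbb{R}\).} Every subgroup of \(\mathbb{R}\) is either cyclic or dense. Let \(\delta := \inf\{g \in G : g > 0\}\).
\begin{itemize}
\item If \(\delta > 0\), one checks that \(\delta \in G\) (otherwise there would be elements of \(G\) in \((\delta, \delta + \varepsilon)\) with arbitrarily small positive differences, contradicting the infimum). Division with remainder then shows \(G = \delta\mathbb{Z}\).
\item In that case \(1 = p\delta\) and \(\lambda = q\delta\) for some integers \(p \neq 0\) and \(q\), so \(\lambda = q/p \in \mathbb{Q}\), contradicting irrationality.
\end{itemize}
Hence \(\delta = 0\). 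Given any interval of length \(\varepsilon\), pick \(g \in G\) with \(0 < g < \varepsilon\); some integer multiple of \(g\) lands in the interval. So \(G\) is dense.

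\emph{Step 2: the set \(\{\fr(n\lambda) : n \in \mathbb{Z}\}\) is dense in \([0,1)\).} Here \(\fr(n\lambda) = n\lambda - \lfloor n\lambda\rfloor \in G \cap [0,1)\). Conversely, every element \(m + n\lambda\) of \(G\) lying in \([0,1)\) equals \(\fr(n\lambda)\). Density of \(G\) therefore gives this step directly.

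\emph{Step 3: restricting to \(n \in \mathbb{N}\).} This is the only mildly delicate point. I would avoid negative indices by a pigeonhole argument:
\begin{itemize}
\item Since \(\lambda\) is irrational, the values \(\fr(n\lambda)\) for \(n \in \mathbb{N}\) are pairwise distinct.
\item Fix \(\varepsilon > 0\) and split \([0,1)\) into \(N > 1/\varepsilon\) intervals of equal length. Among \(\fr(0\cdot\lambda), \dots, \fr(N\lambda)\), two values \(\fr(i\lambda)\) and \(\fr(j\lambda)\) with \(i < j\) lie in the same interval.
\item Put \(k := j - i > 0\). Then \(\fr(k\lambda)\) is either some \(\eta \in (0,\varepsilon)\) or some \(1 - \eta\) with \(\eta \in (0,\varepsilon)\); it is nonzero by irrationality.
\end{itemize}
In the first case, the multiples \(\fr(mk\lambda) = m\eta\) for \(m = 0, 1, \dots, \lfloor 1/\eta\rfloor\) form an \(\varepsilon\)-net of \([0,1)\). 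In the second case, \(\fr(mk\lambda) = 1 - m\eta\) for the same range of \(m\), which again forms an \(\varepsilon\)-net. In both cases the indices \(mk\) lie in \(\mathbb{N}\).

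Since \(\varepsilon\) was arbitrary, \(\{\fr(n\lambda) : n \in \mathbb{N}\}\) is dense in \([0,1)\). Note that Step 3 alone gives the full result, so Steps 1 and 2 can be dropped. The one thing to check carefully is that the multiples do not wrap around incorrectly, which holds because \(m\eta < 1\) throughout the range of \(m\).
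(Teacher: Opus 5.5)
The paper gives no proof of this statement: it is quoted as a classical fact and only applied, with $\lambda=\log_k(\ell)$, in Lemma~\ref{thm:kronecker-dense} and Lemma~\ref{thm:algorithm}. Your Step~3 is the standard Dirichlet pigeonhole argument, and it is a correct, self-contained proof. As you note, Steps~1 and~2 can be dropped: they only give density for $n\in\mathbb{Z}$, and restricting to $n\in\mathbb{N}$ is exactly what Step~3 does.

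Two small points to tidy up:

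\begin{enumerate}
\item In the second case, the formula $\fr(mk\lambda)=1-m\eta$ holds only for $1\le m\le\lfloor 1/\eta\rfloor$. For $m=0$ the value is $0$, not $1$. This is harmless, because $0$ lies in the set and the points $0,\,1-\eta,\,\dots,\,1-\lfloor 1/\eta\rfloor\eta$ still form an $\varepsilon$-net.
\item The strict inequality $m\eta<1$ at $m=\lfloor 1/\eta\rfloor$ requires $1/\eta\notin\mathbb{Z}$. This holds because $\eta$ differs from $\pm k\lambda$ by an integer and is therefore irrational. Alternatively, let $m$ run only up to $\lceil 1/\eta\rceil-1$, which avoids the issue entirely and keeps every gap at most $\eta<\varepsilon$.
\end{enumerate}
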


\begin{lemma} \label{thm:kronecker-dense} 
  Let \(k\), \(\ell\in \mathbb{Z}_{\ge 2}\), and let \(\mathcal{I} \subseteq \mathbb{R}_{>0}\) be a non-empty open interval. Then there exists a non-empty open interval \(\mathcal{J} \subseteq (0,1)\) such that for every \(t \in \mathbb{N}\) with \(\fr(t \log_k(\ell)) \in \mathcal{J}\), there exists \(s \in  \mathbb{N}\) satisfying \(k^s/\ell^t \in \mathcal{I}\). In particular, if \(k\) and \(\ell\) are multiplicatively independent, then the set
  \[\left\{ \frac{k^s}{\ell^t}  : s, t\in \mathbb{N}\right\} \]
  is dense in \(\mathbb{R}_{>0}\).
\end{lemma}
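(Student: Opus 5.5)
We pass to logarithms base \(k\). Fix the interval \(\mathcal{I} = (u,v)\) with \(0<u<v\). Then \(k^s/\ell^t \in \mathcal{I}\) holds if and only if
\[\log_k u < s - t\log_k(\ell) < \log_k v.\]
Write \(t\log_k(\ell) = \lfloor t\log_k(\ell)\rfloor + \fr(t\log_k(\ell))\) and set \(s = \lfloor t\log_k(\ell)\rfloor + j\) for a fixed integer \(j\) still to be chosen. The condition becomes
\[\fr(t\log_k(\ell)) \in (j - \log_k v,\; j - \log_k u).\]
So we must find a fixed \(j\) for which the interval \((j-\log_k v, j-\log_k u)\) meets \((0,1)\) in a non-empty open interval \(\mathcal{J}\).

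\textbf{Choosing \(j\) and \(\mathcal{J}\).} The interval \((-\log_k v, -\log_k u)\) is non-empty and open. Some integer translate of it meets \((0,1)\): pick any point \(y\) in it and let \(j := \lceil -y\rceil\), or \(j = -y + 1/2\) if \(-y\) is an integer. Then \(y + j\) lies in \((0,1]\). The intersection of an open interval with \((0,1)\), when it contains or abuts a point of \((0,1]\), is a non-empty open interval. For cleanliness, shrink \(\mathcal{J}\) to a small open interval strictly inside \((0,1)\cap (j-\log_k v, j-\log_k u)\).

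\textbf{Ensuring \(s\in\mathbb{N}\).} This is the only real obstacle. The value \(s = \lfloor t\log_k(\ell)\rfloor + j\) could be negative when \(j<0\) and \(t\) is small. One option is to pass to the translate \(j' = j+N\), which requires replacing \(\mathcal{I}\) by \(\mathcal{I}\cdot k^{N}\); this does not work directly. The cleaner fix is to choose \(j\ge 0\) from the start. Since \(0<u<v\) are fixed, we can instead first prove the claim for the interval \(k^{-N}\mathcal{I}\) with \(N\) large enough that \(-\log_k(v k^{-N}) = N - \log_k v > 0\). This yields \(j'\ge 0\), and then \(s' = s + N\ge 0\) works for \(\mathcal{I}\). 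In fact, it is simplest to note directly that any integer \(j\) such that the translated interval meets \((0,1)\) satisfies \(j > \log_k u\). Replacing \(\mathcal{I}\) by a subinterval if necessary, we may assume \(u \ge 1\), and then \(j \ge 1\) automatically. More carefully: if \(u<1\), shrink \(\mathcal{I}\) to a subinterval of \((\max(u,1), \max(u,1)+\epsilon)\) when \(v>1\). Otherwise handle it by the \(k^{-N}\) rescaling just described, which is harmless because the conclusion for a subinterval implies it for \(\mathcal{I}\).

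\textbf{Density.} If \(k\) and \(\ell\) are multiplicatively independent, then \(\log_k(\ell)\) is irrational, since \(\log_k \ell = p/q\) would give \(\ell^q = k^p\). By Fact \ref{thm:kronecker}, some \(t\in\mathbb{N}\) has \(\fr(t\log_k(\ell))\in\mathcal{J}\). The first part then gives \(s\) with \(k^s/\ell^t\in\mathcal{I}\). Since \(\mathcal{I}\) was an arbitrary non-empty open interval in \(\mathbb{R}_{>0}\), the set \(\{k^s/\ell^t : s,t\in\mathbb{N}\}\) is dense in \(\mathbb{R}_{>0}\).
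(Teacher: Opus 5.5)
Your reduction to \(\fr(t\log_k(\ell))\in(j-\log_k v,\,j-\log_k u)\) with \(s=\lfloor t\log_k(\ell)\rfloor+j\) is correct. The step guaranteeing \(s\in\mathbb{N}\), however, breaks down when \(\mathcal{I}\) lies near \(0\). An offset \(j\) is admissible only if \(\log_k u<j<1+\log_k v\). So if \(v\le 1/k\), every admissible \(j\) is negative, and your subinterval trick (forcing \(u\ge 1\)) is unavailable.

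The \(k^{-N}\) rescaling does not help. Suppose you prove the claim for \(k^{-N}\mathcal{I}\) with offset \(j'\) and put \(s'=s+N\). Then \(s'=\lfloor t\log_k(\ell)\rfloor+(j'+N)\), and \(j'+N\) is just an admissible offset for \(\mathcal{I}\) itself. So this is the same construction, and \(s'\) is still negative for small \(t\). Your assertion that it yields \(j'\ge 0\) also has the sign backwards: \(N-\log_k v>0\) forces \(j'<1+\log_k v-N<1\), i.e.\ \(j'\le 0\).

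This is a real failure, not a presentational one. Take \(k=2\), \(\ell=3\), \(\mathcal{I}=(0.01,0.02)\) and \(j=-6\). Your recipe permits a \(\mathcal{J}\) containing \(\fr(\log_2 3)\approx 0.585\), yet for \(t=1\) there is no \(s\in\mathbb{N}\) with \(2^s/3\in\mathcal{I}\), since \(2^s/3\ge 1/3\).

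The missing idea is that \(\mathcal{J}\) itself must be shrunk to exclude finitely many small \(t\). In your notation, fix any admissible \(j\). Since \(-j\) is an integer, \(\lfloor t\log_k(\ell)\rfloor+j\ge 0\) holds as soon as \(t\ge -j/\log_k(\ell)\). So only the finitely many \(t\) with \(0\le t<-j/\log_k(\ell)\) are problematic. Take \(\mathcal{J}\) to be a non-empty open subinterval of \((0,1)\cap(j-\log_k v,\,j-\log_k u)\) that avoids the finitely many points \(\fr(t\log_k(\ell))\) for these \(t\).

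This is exactly the paper's argument. It removes \(\fr(t\log_k(\ell))\) for \(t\le t_0\), using that for \(t\) beyond a suitable \(t_0\) a negative \(s\) would give \(k^s/\ell^t<\ell^{-t}<\inf\mathcal{I}\). With this fix, your density argument goes through unchanged.

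Two smaller repairs are also needed. First, \(j=-y+1/2\) is not an integer; instead choose a non-integer \(y\) in the interval and set \(j=\lceil -y\rceil\), which gives \(y+j\in(0,1)\). Second, you should first shrink \(\mathcal{I}\) so that it is bounded with \(\inf\mathcal{I}>0\), since writing \(\mathcal{I}=(u,v)\) tacitly assumes \(0<u<v<\infty\).
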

\begin{proof}
  Shrinking \(\mathcal{I}\) if necessary, we may assume that \(a := \inf\mathcal{I} > 0\). Notice that \(k^s/\ell^t \in \mathcal{I}\) is equivalent to \(s - t \log_k(\ell) \in \log_k(\mathcal{I})\), so we have \(\fr(t \log_k(\ell)) \in \fr(-\log_k(\mathcal{I}))\) if and only if there exists \(s \in \mathbb{Z}\) with \(k^s/\ell^t \in \mathcal{I}\). It remains to find \(\mathcal{J} \subseteq \fr(-\log_k(\mathcal{I}))\) such that the \(s \in \mathbb{Z}\) we obtain is non-negative. Let \(t_0 \in \mathbb{N}\) be greater than \(-\log_k(a)\). Now if \(s < 0\) and \(t>t_0\), then \(k^{s}/\ell^t < 1/\ell^{t_0} < a \). So it suffices to pick a non-empty open \(\mathcal{J} \subseteq \fr(-\log_k(\mathcal{I}))\) which does not contain the finitely many points \(\fr(t\log_k(\ell))\) for \(t \in \{0, \dots, t_0\}\).

  For the second claim, notice that the multiplicative independence of \(k\) and \(\ell\) is equivalent to \(\log_k(\ell)\) being irrational. Hence, Fact~\ref{thm:kronecker} tells us that for any non-empty open \(\mathcal{J} \subseteq (0,1)\), there is a \(t \in \mathbb{N}\) with \(\fr(t \log_k(\ell)) \in \mathcal{J}\). Density thus follows from the first part.
\end{proof}

\begin{corollary} \label{thm:always-solution}
  Let \(k\), \(\ell \in \mathbb{Z}_{\ge 2}\) be multiplicatively independent. Consider a system of strict linear homogeneous inequalities with two variables \(x\) and \(y\) and coefficients in \(\mathbb{Q}\). If this system has a solution \((u,v) \in \mathbb{R}_{>0 }^2\), then it also has infinitely many solutions \((u,v) \in k^{\mathbb{N}} \times \ell^{\mathbb{N}}\).
\end{corollary}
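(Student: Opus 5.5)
The idea is to reduce the system to a single condition on the ratio \(u/v\) and then apply Lemma~\ref{thm:kronecker-dense}. Every strict homogeneous inequality \(ax + by > 0\) with \(a\), \(b \in \mathbb{Q}\), restricted to \(x, y > 0\), depends only on \(r := x/y \in \mathbb{R}_{>0}\). Dividing by \(y > 0\), it becomes \(ar + b > 0\). The set of \(r > 0\) satisfying this is either empty, all of \(\mathbb{R}_{>0}\), or an open ray \((c,\infty)\) or \((0,c)\) intersected with \(\mathbb{R}_{>0}\), for some \(c = -b/a\).

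Hence the solution set of the system, in terms of \(r\), is a finite intersection of open subsets of \(\mathbb{R}_{>0}\), and so is open. It is nonempty, because it contains \(u/v\) for the given real solution \((u,v)\). So it contains a nonempty open interval \(\mathcal{I} \subseteq \mathbb{R}_{>0}\). Conversely, any pair \((x,y) \in \mathbb{R}_{>0}^2\) with \(x/y \in \mathcal{I}\) solves the system.

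It then suffices to find infinitely many pairs \((s,t) \in \mathbb{N}^2\) with \(k^s/\ell^t \in \mathcal{I}\). Distinct pairs \((s,t)\) give distinct points \((k^s,\ell^t)\), so that yields infinitely many solutions in \(k^{\mathbb{N}} \times \ell^{\mathbb{N}}\).

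For this step I use the first part of Lemma~\ref{thm:kronecker-dense}, which supplies a nonempty open interval \(\mathcal{J} \subseteq (0,1)\) such that every \(t\) with \(\fr(t\log_k(\ell)) \in \mathcal{J}\) admits some \(s \in \mathbb{N}\) with \(k^s/\ell^t \in \mathcal{I}\). Since \(k\) and \(\ell\) are multiplicatively independent, \(\log_k(\ell)\) is irrational. By Fact~\ref{thm:kronecker}, the fractional parts \(\fr(t\log_k(\ell))\) are then dense in \([0,1)\), so infinitely many \(t\) have \(\fr(t\log_k(\ell)) \in \mathcal{J}\).

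To see this last claim, note that the values \(\fr(t\log_k(\ell))\) are pairwise distinct by irrationality. If only finitely many \(t\) landed in \(\mathcal{J}\), we could shrink \(\mathcal{J}\) to a nonempty open subinterval avoiding those finitely many values, and density would then give a new \(t\) landing in the subinterval, which is a contradiction. Each such \(t\) produces a pair \((s,t)\), and these pairs are distinct because their \(t\)-coordinates differ. This gives infinitely many solutions.

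The only mildly delicate point is the reduction from two variables to the ratio, in particular checking that homogeneity and positivity make each inequality depend only on \(r\), and that the resulting solution set is open in \(\mathbb{R}_{>0}\). I expect the rest to be a direct application of the lemma.
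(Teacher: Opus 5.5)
Your proposal is correct and follows essentially the same route as the paper: use homogeneity to reduce the system to the nonempty open set of admissible ratios $u/v$, then apply Lemma~\ref{thm:kronecker-dense}. Your explicit argument that infinitely many $t$ satisfy $\fr(t\log_k(\ell)) \in \mathcal{J}$ just spells out the ``infinitely many'' step that the paper leaves implicit in the density statement of that lemma.
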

\begin{proof}
  Let \(S \subseteq \mathbb{R}_{> 0}^2\) be the set of solutions in \(\mathbb{R}_{>0}^2\). Since the inequalities are homogeneous, we have \((u,v) \in S\) if and only if \((\frac{u}{v}, 1) \in S\). Since the inequalities are strict and linear, the projection of \(S \cap \mathbb{R}_{>0} \times \{1\}\) to the first coordinate is an open interval. Since \(S \ne \emptyset \), this interval is also non-empty. Therefore, Lemma~\ref{thm:kronecker-dense} implies the existence of infinitely many \(m\), \(n \in \mathbb{N}\) with \((\frac{k^m}{\ell^n}, 1 ) \in S\).
\end{proof}

The next lemma can be seen as a generalization of Lemma \ref{thm:kronecker-dense} to more than two variables.
In what follows, we write tuples as \(x_{i : j}\) instead of \((x_i, x_{i+1}, \dots, x_j)\) to allow for more concise notation. When \(\ell \in \mathbb{Z}_{\ge 2}\) and \linebreak[2]\(t_{i:j} \in \mathbb{N}^{j-i+1}\), we also write \(\ell^{t}_{i:j}\) for \((\ell^{t_i}, \ell^{t_{i+1}}, \dots, \ell^{t_j})\).

\begin{lemma}[{Pumping Lemma, \cite[Lemma 13]{Karimov}}]
  \label{thm:pumping-lemma} 
  Let \(k\), \(\ell \in \mathbb{Z}_{\ge 2}\) be multiplicatively independent, \(u_{1:m} \in (k^{\mathbb{N}})^m\), \(v_{1:n} \in (\ell^{\mathbb{N}})^n\), and let \(h_1\), \dots, \(h_r\) be \(\mathbb{Q}\)-linear forms in \(m+n\) variables.
  Write \(J = \{j : h_j(u_{1:m}, v_{1:n}) > 0\}\). Then for any \(\varepsilon > 0\), there exists a non-empty open interval \(\mathcal{I} \subseteq (0,1)\) with the following property. For all \(t_1 \in \mathbb{N}\) with \(\fr(t_1\log_k(\ell)) \in \mathcal{I}\), there exist \(s_{1:m} \in \mathbb{N}^{m-1}\), \(t_{2:n} \in \mathbb{N}^n\) such that for all \(j\in \{1, \dots, r\}\),
  \begin{enumerate}
    \item if \(j\in J\), then \(h_j(k^s_{1:m}, \ell^t_{1:n}) > 0\), and
    \item \(\displaystyle \left|\frac{h_j(k^s_{1:m}, \ell^t_{1:n})}{\ell^{t_1}}-\frac{h_j(u_{1:m},v_{1:n})}{v_1}\right| < \varepsilon\).
  \end{enumerate}
\end{lemma}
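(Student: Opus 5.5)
The plan is to reduce the statement to Lemma~\ref{thm:kronecker-dense} by \enquote{pumping} all exponents rigidly along with \(t_1\). Write \(u_i = k^{a_i}\) and \(v_j = \ell^{b_j}\). If \(m = 0\) there is no \(k\)-part: take \(t_j := t_1 + b_j - b_1\). Then the quotients in (ii) coincide exactly, by homogeneity of the \(h_j\), so any interval \(\mathcal{I}\) works once finitely many \(t_1\) are excluded.

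For \(m \ge 1\), the choice of exponents will be
\[t_j := t_1 + (b_j - b_1) \quad (2 \le j \le n), \qquad s_i := s_1 + (a_i - a_1) \quad (2 \le i \le m),\]
where \(s_1\) is still to be chosen. With this choice, \(\ell^{t_j}/\ell^{t_1} = v_j/v_1\) and \(k^{s_i}/k^{s_1} = u_i/u_1\) hold exactly. Put \(\rho := k^{s_1}/\ell^{t_1}\). By homogeneity of the linear forms,
\[\frac{h_j(k^s_{1:m}, \ell^t_{1:n})}{\ell^{t_1}} = h_j\Bigl(\rho\frac{u_1}{u_1}, \dots, \rho\frac{u_m}{u_1}, \frac{v_1}{v_1}, \dots, \frac{v_n}{v_1}\Bigr).\]
The target value \(h_j(u_{1:m}, v_{1:n})/v_1\) is the same expression with \(\rho\) replaced by \(u_1/v_1\). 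Since each \(h_j\) is linear, the difference between the two is at most \(C\,|\rho - u_1/v_1|\) for a constant \(C\) depending only on the coefficients of the \(h_j\) and on the ratios \(u_i/u_1\).

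Next I fix \(\delta > 0\) with \(C\delta < \varepsilon\) and \(C\delta < \min_{j \in J} h_j(u_{1:m}, v_{1:n})/v_1\). The second condition makes (i) follow from (ii): if \(j \in J\), the normalized target value is positive and exceeds the error, so \(h_j(k^s_{1:m}, \ell^t_{1:n}) > 0\). Let \(\mathcal{I}_0 := (u_1/v_1 - \delta,\ u_1/v_1 + \delta) \cap \mathbb{R}_{>0}\). Lemma~\ref{thm:kronecker-dense} then gives a non-empty open interval \(\mathcal{J} \subseteq (0,1)\) such that every \(t_1\) with \(\fr(t_1 \log_k(\ell)) \in \mathcal{J}\) admits \(s_1 \in \mathbb{N}\) with \(\rho \in \mathcal{I}_0\). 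This \(s_1\) is the one used in the choice of exponents above.

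The main point needing care is that all the exponents \(t_j\) and \(s_i\) must be non-negative. For the \(t_j\) it suffices that \(t_1 \ge T_0 := \max_j |b_j - b_1|\). For the \(s_i\), note that \(\rho \in \mathcal{I}_0\) forces \(k^{s_1} \ge \ell^{t_1}\,(u_1/v_1 - \delta)\), which is unbounded in \(t_1\) provided \(\delta < u_1/v_1\). So there is a bound \(T_1\) such that \(t_1 > T_1\) implies \(s_1 \ge \max_i |a_i - a_1|\). Finally, I take \(\mathcal{I} \subseteq \mathcal{J}\) to be a non-empty open subinterval avoiding the finitely many points \(\fr(t \log_k(\ell))\) for \(t \le \max(T_0, T_1)\). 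This is the same trick as in the proof of Lemma~\ref{thm:kronecker-dense}. Every \(t_1\) with \(\fr(t_1 \log_k(\ell)) \in \mathcal{I}\) is then large enough, and (i) and (ii) follow from the estimates above.
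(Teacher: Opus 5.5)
The paper gives no proof of this lemma; it imports it from \cite[Lemma 13]{Karimov}. Your argument is a correct, self-contained proof of the statement as formulated here, reading the typos as \(s_{1:m}\in\mathbb{N}^m\) and \(t_{2:n}\in\mathbb{N}^{n-1}\).

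Your key observation is that translating all \(k\)-exponents by one common amount and all \(\ell\)-exponents by another preserves every intra-base ratio \(u_i/u_1\), \(v_j/v_1\) exactly. After dividing by \(\ell^{t_1}\), the only free quantity is then the cross-ratio \(\rho = k^{s_1}/\ell^{t_1}\), and each normalized form depends affinely on it. The uniformity in \(t_1\) that the statement demands is exactly what the first part of Lemma~\ref{thm:kronecker-dense} provides. Condition (i) follows from (ii) once \(C\delta < \min_{j\in J} h_j(u_{1:m},v_{1:n})/v_1\). You handle non-negativity of the exponents correctly by excluding finitely many \(t_1\), as in the proof of Lemma~\ref{thm:kronecker-dense}. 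One small fix: make \(\delta < u_1/v_1\) an explicit part of the choice of \(\delta\) rather than a proviso. This is needed both for \(\mathcal{I}_0\subseteq\mathbb{R}_{>0}\) and for \(s_1\to\infty\).

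Compared with the citation, your route shows that the version actually stated and used in Lemma~\ref{thm:algorithm} needs no input beyond Kronecker's theorem. That version asks only for approximation of the normalized values and preservation of the strict signs. This makes Section~\ref{sec:solving-inequalities} self-contained. The citation instead spares the reader this verification and ties the argument directly to the decision procedure of \cite{Karimov}.
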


\begin{definition} \label{def:archimedean-class}
  Let \((M,+,<)\) be an ordered group and \(a\), \(b\in M_{> 0}\). We write \(a \gg b\) if \(a > n \cdot b\) for all \(n\in \mathbb{N}\), and \(a \ll b\) if \(b \gg a\). We write \(a \sim b\) if neither \(a \gg b\) nor \(a \ll b\). The equivalence classes of the equivalence relation \(\sim\) on \(M_{>0}\) are called \emph{Archimedean classes}.
\end{definition}
The following is the essential consequence of the basic inequality axioms for non-standard powers.
\begin{lemma} \label{thm:rep-per-arch-class} 
  Let \(\ell \in  \mathbb{Z}_{\ge 2}\) and suppose \((\mathcal{M}, A_{\ell})\) satisfies \ref{ax:oag}, \ref{ax:universal-multiplication} and \(\BInequ(\{\ell\})\). Let \(K\) be an Archimedian class of \(\mathcal{M}\). If \(K \cap A_{\ell} \ne \emptyset \), then \(K \cap A_{\ell} \subseteq \ell^{\mathbb{Z}}\cdot \alpha\) for any \(\alpha\in K \cap A_{\ell}\).
\end{lemma}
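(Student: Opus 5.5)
Fix \(\alpha \in K \cap A_{\ell}\) and let \(\gamma \in K \cap A_{\ell}\) be arbitrary. The goal is to find \(j \in \mathbb{Z}\) with \(\ell^{j}\gamma = \alpha\) or \(\ell^j \alpha = \gamma\); this is the intended meaning of \(\gamma \in \ell^{\mathbb{Z}}\cdot\alpha\), since we cannot divide. The idea is to compare \(\gamma\) with the chain \(\dots < \ell^{-1}\)-scaled \(\dots\), that is, with the elements \(\ell^n\alpha\) for \(n \in \mathbb{N}\) and the elements \(\alpha' \) with \(\ell^n \alpha' = \alpha\). We then use \(\BInequ(\{\ell\})\) to show that \(\gamma\) cannot lie strictly between consecutive terms.

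First I would note what the axioms give. By \ref{ax:universal-multiplication}, every \(\ell^n \alpha\) with \(n\in\mathbb{N}\) lies in \(A_\ell\). By \(\BInequ\), every element of \(A_\ell\) is positive. Since \(\gamma \sim \alpha\), there is \(N \in \mathbb{N}\) with \(\gamma < N\alpha \le \ell^{n}\alpha\) and \(\alpha < N\gamma \le \ell^n\gamma\) for some \(n\).

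Now assume without loss of generality that \(\gamma \ge \alpha\); otherwise swap the roles of \(\alpha\) and \(\gamma\), which are symmetric. Then \(\alpha \le \gamma < \ell^n\alpha\). Let \(m\) be the least \(m \ge 0\) with \(\gamma < \ell^{m+1}\alpha\). Such an \(m\) exists because the order is linear and the sequence \(\ell^i\alpha\) is strictly increasing, as \(\alpha>0\) and \(\ell \ge 2\). By minimality, \(\ell^m\alpha \le \gamma < \ell\cdot(\ell^m\alpha)\).

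Both \(x := \ell^m\alpha\) and \(y := \gamma\) lie in \(A_\ell\), so \(\BInequ(\{\ell\})\) forbids \(x < y < \ell x\). Hence \(\gamma = \ell^m \alpha\). In the swapped case we get \(\alpha = \ell^m\gamma\), so \(\gamma\) is \(\alpha\) scaled by \(\ell^{-m}\) in the sense that \(\ell^m\gamma = \alpha\). Either way \(\gamma \in \ell^{\mathbb{Z}}\cdot\alpha\).

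There is no real obstacle here. The only points needing care are interpreting \(\ell^{\mathbb{Z}}\cdot\alpha\) correctly in a group without division, and using the archimedean equivalence to bound \(\gamma\) by some \(\ell^n\alpha\), which is what guarantees that the minimal \(m\) exists.
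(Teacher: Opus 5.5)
Your proof is correct and follows essentially the same route as the paper. You use the Archimedean equivalence to trap one element between consecutive terms \(\ell^{j}\alpha \le \gamma < \ell^{j+1}\alpha\) via a minimal exponent, then apply \(\BInequ(\{\ell\})\) to force equality; the only cosmetic difference is that the paper avoids your WLOG \(\gamma \ge \alpha\) by first multiplying \(\alpha'\) by some \(\ell^m\) to push it above \(\alpha\), obtaining \(\ell^m\alpha' = \ell^{n-1}\alpha\) directly.
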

\begin{proof}
  Suppose \(\alpha\), \(\alpha' \in K \cap A_{\ell}\). Since \(\alpha\) and \(\alpha'\) lie in the same Archimedean class, there are \(m\), \(n\in \mathbb{N}\) such that \(\alpha < \ell^m \alpha' < \ell^n \alpha\). We pick \(n\) minimal with this property. Hence, \(\ell^{n-1} \alpha \le \ell^m \alpha'\). By \ref{ax:universal-multiplication}, we have \(\ell^m \alpha'\), \(\ell^{n-1} \alpha \in A_{\ell}\). Therefore, the basic inequality axioms tell us that there is no element of \(A_{\ell}\) strictly between \(\ell^{n-1}\alpha\) and \(\ell^{n}\alpha\). Hence, \(\ell^m \alpha' = \ell^{n-1} \alpha\) and thus \(\alpha' = \ell^{n-m-1} \alpha \in \ell^{\mathbb{Z}} \alpha\).
\end{proof}

The next proof follows the algorithm from \cite[Section 6]{Karimov} for deciding solvability of inequalities in powers.

\begin{lemma} \label{thm:algorithm} 
  Let \(k\), \(\ell \in \mathbb{Z}_{\ge 2}\) be multiplicatively independent, let \((\mathcal{M},A_k, A_{\ell})\) satisfy the axioms \ref{ax:oag}, \ref{ax:universal-multiplication} and \(\BInequ(\{k, \ell\})\), and let \(C \in \mathbb{Q}^{r \times (m+n)}\). If there is \(\alpha \in (A_k)^m \times (A_{\ell})^n\) with \(C\alpha > 0\), then there are infinitely many \(z \in (k^{\mathbb{N}})^m \times (\ell^{\mathbb{N}})^n\) with \(Cz > 0\).
\end{lemma}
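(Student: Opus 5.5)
We argue by induction on the number \(m+n\) of variables, mirroring the algorithm of \cite[Section 6]{Karimov}. The idea is to group the coordinates of \(\alpha\) by Archimedean class, use Lemma~\ref{thm:rep-per-arch-class} to turn each class into a single ``unknown'' times standard powers, and then realise the required size relations with standard powers. We first normalise. Multiplying \(C\) by a positive integer we may assume \(C\) has integer entries. By \(\BInequ\) all coordinates of \(\alpha\) are positive, so they lie in Archimedean classes \(K_1 \gg K_2 \gg \dots \gg K_q\).

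By Lemma~\ref{thm:rep-per-arch-class}, within each class \(K_p\) the coordinates coming from \(A_k\) are of the form \(k^{e}\gamma_p\) with \(e\in\mathbb{Z}\), for a single \(\gamma_p\in A_k\cap K_p\). Shifting, we may take \(e\ge 0\). Similarly, those coming from \(A_\ell\) are of the form \(\ell^{f}\delta_p\) with \(f\ge 0\) and \(\delta_p\in A_\ell\cap K_p\).

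If a class contains both kinds, then \(\gamma_p\sim\delta_p\). In the ordered divisible hull of \(\mathcal{M}\), the cut of \(\gamma_p/\delta_p\) determines a positive real \(\rho_p\), namely the standard part of the ratio, which may be a limit from one side. Now each inequality \(c\cdot\alpha>0\) splits by classes. Its sign is determined by the leading class \(K_p\) in which the restricted linear form does not vanish identically. Within that class it reads \(\sum_i a_ik^{e_i}\gamma_p+\sum_j b_j\ell^{f_j}\delta_p\), which must be \(>0\).

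The plan is to produce a standard solution class by class, from the largest class downward. For a class containing only \(k\)-powers, the relevant forms are integer combinations \(\sum a_ik^{e_i}\). Their sign is already fixed by the integers \(e_i\), so we can simply replace \(\gamma_p\) by \(k^{S_p}\). For a mixed class we need \(k^{S}/\ell^{T}\) to approximate \(\gamma_p/\delta_p\) well enough. We need strict signs for the nonzero standard-part values. When the standard part of a leading expression is \(0\) but the expression is nonzero in \(\mathcal{M}\), we need the correct infinitesimal sign. This is exactly the situation that the Pumping Lemma~\ref{thm:pumping-lemma} and Lemma~\ref{thm:kronecker-dense} are designed to handle. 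Their open-interval formulation gives infinitely many admissible exponents \(t\), with \(k^s/\ell^t\) in any prescribed interval on the correct side of \(\rho_p\).

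Finally we choose the scales \(S_p,T_p\) so that consecutive classes are separated by huge gaps, say \(k^{S_{p}}\ge N\cdot k^{S_{p+1}}\) for a large \(N\) depending on \(C\). This mimics \(K_p\gg K_{p+1}\), so that each inequality is decided by its leading class, exactly as in \(\mathcal{M}\). Infinitely many solutions follow because the gaps and the Kronecker parameters can be enlarged arbitrarily.

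The main obstacle is the case where the leading class contributes zero up to infinitesimals. Here \(\sum a_ik^{e_i}+\rho_p\sum b_j\ell^{f_j}=0\) holds in the reals, while in \(\mathcal{M}\) the expression is positive because of a nonstandard deviation of \(\gamma_p/\delta_p\) from \(\rho_p\). Combined with lower classes, this cancellation can interact with the contributions of smaller classes, so the sign is no longer decided by the leading class alone. I would handle this by induction. Let \(\varepsilon=\gamma_p-\rho\delta_p\), which is taken in a suitable scaled form; I expect \(\rho\) to be forced to be a ratio \(\ell^f/k^e\) times a rational number, since otherwise the exact cancellation cannot occur. Then \(\varepsilon\) is a smaller-class element whose sign is known. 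We substitute and recurse on the resulting system, which has fewer effective scales. The Pumping Lemma, with its one-sided control in (i) and its \(\varepsilon\)-closeness in (ii), then lets us realise the recursive solution inside a Kronecker interval. Making this bookkeeping precise is where I expect most of the work to lie.
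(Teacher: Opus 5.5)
Your reduction to Archimedean classes via Lemma~\ref{thm:rep-per-arch-class} and your handling of the non-cancelling case via Lemma~\ref{thm:kronecker-dense} are fine. The gap is in the cancellation case, which you defer, and that case is the heart of the lemma.

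\textbf{The recursion does not fit the induction.} After substituting $\varepsilon=\gamma_p-\rho\delta_p$, the unknown $\varepsilon$ is not an element of $A_k\cup A_\ell$. In the standard model it must be realised as the specific quantity $k^{s}-\rho\ell^{t}$ (up to the fixed factors), not as a free variable. The lemma you are proving says nothing about such mixed systems. Rewriting $\varepsilon$ back in terms of $\gamma_p,\delta_p$ just returns the original system with no fewer variables.

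The paper avoids this by eliminating the deviation rather than keeping it. It isolates the top variable $x_1$ and writes the system as $b_i^-y_1+h_i^-<x_1<b_j^+y_1+h_j^+$ together with $\psi$, then shows $b_i^-\le b_j^+$. In the tight case $b=b_i^-=b_j^+$, it observes that $h_i^-<h_j^+$ holds at the lower coordinates of $\alpha$. Then $\{h_i^-<h_j^+ : i\in I',j\in J'\}\cup\psi^0$ is again a system in powers with fewer variables, so the induction hypothesis applies to it. This also handles any further cancellations in lower classes automatically.

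Two smaller points. Your $\rho$ is simply rational, namely $-\sum a_ik^{e_i}/\sum b_j\ell^{f_j}$. Also, a formally non-vanishing restricted form can evaluate to exactly $0$ in $\mathcal{M}$ (e.g.\ if $d\gamma_p=c\delta_p$, which the axioms allow), so ``leading class where the form does not vanish identically'' is the wrong criterion.

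\textbf{Matching the deviation to the lower solution.} Even with a standard lower solution in hand, you must make $k^{s_1}-b\ell^{t_1}$ land strictly between $\max_i h_i^-$ and $\min_j h_j^+$ evaluated at it. This clashes with your plan of choosing huge gaps between classes independently of the Kronecker parameters. If $k^{s_1}/\ell^{t_1}$ is within about $\ell^{-w}$ of $b$, the lower solution has to sit at scale about $\ell^{t_1-w}$, so the gap is dictated by the approximation quality. The paper resolves this in three steps:
\begin{enumerate}
\item The Pumping Lemma~\ref{thm:pumping-lemma} rescales the lower solution to $\ell^{t_2}$ for every $t_2$ with $\fr(t_2\log_k\ell)\in\mathcal{I}$, keeping the normalised values $h/\ell^{t_2}$ $\varepsilon$-close.
\item Lemma~\ref{thm:kronecker-dense} yields intervals $\mathcal{J}_w$ of admissible $\fr(t_1\log_k\ell)$ for which $k^{s_1}/\ell^{t_1}-b$ lies in the $\ell^{-w}$-scaled window.
\item The two are synchronised by requiring $w=t_1-t_2$, i.e.\ $\mathcal{J}_w\cap\fr(\mathcal{I}+w\log_k\ell)\neq\emptyset$. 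This follows because $\mathcal{J}_w$ shrinks to $\fr(-\log_k b)$, together with Fact~\ref{thm:kronecker} applied to rotation by $\log_k\ell$.
\end{enumerate}
You gesture at (i) and (ii) but give no mechanism for (iii). Without it, the ``bookkeeping'' you postpone cannot be carried out.
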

\begin{proof}
  The proof proceeds by induction on the number of variables. The base case \(m \le 1\) and \(n\le 1\) is contained in Lemma \ref{thm:always-solution}. Let \(\alpha = (\alpha_{1:m},\alpha'_{1:n})\) with \(\alpha_{1:m} \in (A_k)^m\), \(\alpha_{1:n}' \in (A_{\ell})^{n}\) satisfy \(C \alpha > 0\). By Lemma~\ref{thm:rep-per-arch-class}, if \(\alpha_i\) and \(\alpha_j\) are in the same Archimedean class, then \(\alpha_j = k^\nu \alpha_i\) for some \(\nu\in \mathbb{Z}\), so we may reduce the number of variables and apply induction. Therefore, we can now assume \(\alpha_1 \gg \alpha_2 \gg \dots \gg \alpha_m\), and similarly \(\alpha_1' \gg \alpha_2' \gg \dots \gg \alpha_n'\). 

  Without loss of generality, assume \(\alpha_1 > \alpha_1'\). We consider the system of inequalities \(C(x_{1:m},y_{1:n}) > 0\). By dividing by the coefficient of \(x_1\) whenever it is non-zero and bringing all other terms on the other side, we can re-write it as
  \begin{align*}
    b_i^-y_1 + h_i^-(x_{2:m}, y_{2:n}) &< x_1 < b_j^+y_1 + h_j^+(x_{2:m}, y_{2:n}) \qquad \text{for } i\in I, j\in J,\\
    \text{and }&\psi(x_{2:m}, y_{1:n})
  \end{align*}
  where \(I\), \(J\) are finite index sets, \(b_i^-\), \(b_j^+ \in \mathbb{Q}\), \(h_i^-\), \(h_j^+\) are \(\mathbb{Q}\)-linear forms for \(i \in I\), \(j \in J\), and \(\psi\) is a system of strict homogeneous \(\mathbb{Q}\)-linear inequalities. This \(\psi\) collects all the inequalities in which the coefficient of \(x_1\) was zero. If \(J = \emptyset \), then we can find \((u_{2:m},v_{1:n}) \in (k^{\mathbb{N}})^{m-1} \times (\ell^{\mathbb{N}})^n\) satisfying \(\psi(u_{2:m},v_{1:n})\) by induction and pick \(u_1 \in k^{\mathbb{N}}\) large enough such that \(b_i^-v_1+h_i^-(u_{2:m},v_{2:n}) < u_1\) for all \(i \in I\). So let \(J \ne \emptyset \) from now on. By assumption, we have
  \begin{equation}
  \label{eq:1}
    b_i^-\alpha'_1 + h_i^-(\alpha_{2:m}, \alpha'_{2:n}) < \alpha_1 < b_j^+\alpha'_1 + h_j^+(\alpha_{2:m}, \alpha'_{2:n}).
  \end{equation}
  Since \(J \ne \emptyset \), we cannot have \(\alpha_1 \gg \alpha_1'\). Hence, \(\alpha_1 > \alpha_1'\) implies that \(\alpha_1 \sim \alpha_1'\). Now \(\alpha_1' \gg \alpha_2, \alpha_2'\) yields that 
  \begin{align*}
    |b_i^- \alpha'_1| &\gg \left|h_i^-(\alpha_{2:m}, \alpha'_{2:n})\right|\qquad\text{and}\\
    |b_j^+ \alpha'_1| &\gg \left|h_j^+(\alpha_{2:m}, \alpha'_{2:n})\right|
  \end{align*}
  for any \(i\in I\), \(j\in J\) with \(b_i^-\), \(b_j^+ \ne 0\).  Therefore, we must have \(b_i^- \le b_j^+\) and \(b_j^{+} >0 \) for any \(i\in I\), \(j\in J\) for \eqref{eq:1} to hold. Applying the same reasoning to \(\psi\) shows that the coefficient of \(y_1\) in every inequality in \(\psi\) needs to be non-negative. We may thus split \(\psi(x_{2:m},y_{1:n})\) into \(\psi^+(x_{2:m},y_{1:n}) \wedge \psi^0(x_{2:m},y_{2:n})\) such that the coefficient of \(y_1\) in every inequality of \(\psi^+\) is positive.

  To now solve these inequalities, we first consider the easier case that \(b_i^- < b_j^+\) for all \(i\in I\), \(j\in J\). By induction, there are infinitely many solutions \((u_{2:m}, v_{2:n}) \in (k^{\mathbb{N}})^{m-1} \times (\ell^{\mathbb{N}})^{n-1}\) of \(\psi^0\). Fix one particular such \((u_{2:m}, v_{2:n})\). By Lemma \ref{thm:kronecker-dense}, we can find infinitely many \(u_1 \in k^{\mathbb{N}}\) and \(v_1 \in \ell^{\mathbb{N}}\) such that \(b_i^- < u_1/v_1 < b_j^+\) for all \(i\in I\), \(j\in J\). This means that for large enough \(v_1\), we will have
  \[b_i^-+ \frac{h_i^-(u_{2:m}, v_{2:n})}{v_1} < \frac{u_1}{v_1}  < b_j^+ + \frac{h_j^+(u_{2:m}, v_{2:n})}{v_1} \qquad \text{for } i\in I, j\in J.\]
  Furthermore, \(\psi^+(u_{2:m},v_{1:n})\) will also hold for sufficiently large \(v_1\), which means we have found the desired solutions \((u_{1:m},v_{1:n})\) in this case.

  Now consider the case that \(b := b_i^- = b_j^+\) for some \(i\in I\), \(j\in J\). Set \(I' := \{i \in I : b_i^- = b\}\) and \(J' := \{j \in J : b_j^+ = b\}\). Then
  \[b\alpha'_1 + h_i^-(\alpha_{2:m}, \alpha'_{2:n}) < \alpha_1 < b\alpha'_1 + h_j^+(\alpha_{2:m}, \alpha'_{2:n})\]
  implies that
  \[h_i^-(\alpha_{2:m}, \alpha'_{2:n}) < h_j^+(\alpha_{2:m}, \alpha'_{2:n}) \qquad \text{for all } i \in I', j \in J'.\]
  So by induction, there are infinitely many \((u_{2:m},v_{2:n}) \in (k^{\mathbb{N}})^{m-1} \times (\ell^{\mathbb{N}})^{n-1}\) satisfying both 
  \[h_i^-(u_{2:m},v_{2:n}) < h_j^+(u_{2:m},v_{2:n}) \qquad \text{for all } i \in I', j \in J'\]
  and \(\psi^0(u_{2:m},v_{2:n})\). Fix one particular such \((u_{2:m},v_{2:n})\), and pick \(\varepsilon > 0\) such that
  \[\frac{h_i^-(u_{2:m},v_{2:n})}{v_2} + \varepsilon < \frac{h_j^+(u_{2:m},v_{2:n})}{v_2} - \varepsilon \qquad \text{for all } i \in I', j \in J'.\]
  By Lemma \ref{thm:pumping-lemma}, there is a non-empty open interval \(\mathcal{I} \subseteq (0,1)\) such that for any \(t_2 \in \mathbb{N}\) with \(\fr(t_2\log_k(\ell)) \in \mathcal{I}\), there exist \(s_{2:m} \in \mathbb{N}^{m-1}\) and \(t_{3:n} \in \mathbb{N}^{n-2}\) such that
  \begin{equation}
  \label{eq:2}
    \frac{h_i^-(k^s_{2:m}, \ell^t_{2:n})}{\ell^{t_2}} < \frac{h_i^-(u_{2:m},v_{2:n})}{v_2} + \varepsilon < \frac{h_j^+(u_{2:m},v_{2:n})}{v_2} - \varepsilon < \frac{h_j^+(k^s_{2:m}, \ell^t_{2:n})}{\ell^{t_2}}
  \end{equation}
  for all \(i \in I'\), \(j \in J'\), and \(\psi^0(k^s_{2:m}, \ell^t_{2:n})\) holds. On the other hand, we can use Lemma \ref{thm:kronecker-dense} to find a family \((\mathcal{J}_w)_{w \in \mathbb{N}}\) of non-empty open subintervals of \((0,1)\) such that for any \(t_1\), \(w \in \mathbb{N}\) with \(\fr(t_1 \log_k(\ell)) \in \mathcal{J}_w\), there exists \(s_1 \in \mathbb{N}\) such that
  \begin{equation}
  \label{eq:3}
    b + \frac{1}{\ell^w} \left(\frac{h_i^-(u_{2:m},v_{2:n})}{v_2} + \varepsilon \right) < \frac{k^{s_1}}{\ell^{t_1}} < b + \frac{1}{\ell^w} \left(\frac{h_j^+(u_{2:m},v_{2:n})}{y_2} - \varepsilon \right)
  \end{equation}
  for all \(i \in I'\), \(j \in J'\). Our goal is now to find \(w \in \mathbb{N}\) and \(t_1\), \(t_2 \in \mathbb{N}\) such that
  \begin{equation}
  \label{eq:4}
    \fr(t_1 \log_k(\ell)) \in \mathcal{J}_w, \quad \fr(t_2\log_k(\ell)) \in \mathcal{I} \quad \text{ and } \quad w = t_1-t_2,
  \end{equation}
  as then \eqref{eq:2} and \eqref{eq:3} together yield
  \begin{align*}
    \frac{k^{s_1}}{\ell^{t_1}} &< b + \frac{1}{\ell^{t_1-t_2}} \left(\frac{h_j^+(u_{2:m},v_{2:n})}{v_2} - \varepsilon \right) \\
    &< b + \frac{1}{\ell^{t_1-t_2}} \frac{h_j^+(k^s_{2:m},\ell^t_{2:n})}{\ell^{t_2}} \\
    &= b + \frac{h_j^+(k^s_{2:m},\ell^t_{2:n})}{\ell^{t_1}} \qquad\qquad \text{for all } j \in J'
  \end{align*}
  and similarly for the lower bound. Notice that \eqref{eq:4} is equivalent to
  \[\fr(t_1 \log_k(\ell)) \in \mathcal{J}_w \cap \fr(\mathcal{I}+w\log_k(\ell)) \quad \text{ and } \quad t_2 = t_1-w. \]
  Thus, we need to find \(w \in \mathbb{N}\) such that
  \[\mathcal{J}_w \cap \fr(\mathcal{I}+w\log_k(\ell)) \ne \emptyset.\]
  We claim that there are infinitely many such \(w\) (see Figure \ref{fig:circle} for an illustration of this part of the proof). Indeed, observe that if \(s_1\), \(t_1 \in \mathbb{N}\) satisfy \eqref{eq:3}, then as \(w \to \infty \), we have \(k^{s_1}/\ell^{t_1} \to b\) and thus \[\fr(t_1\log_k(\ell)) \to \fr(-\log_k(b)).\] So the intervals \(\mathcal{J}_w\) shrink and converge to the point \(\fr(-\log_k(b))\) as \(w \to \infty \). Let \(\mathcal{J} \subseteq (0,1)\) be an interval centered at \(\fr(-\log_k(b))\) whose length is at most half the length of \(\mathcal{I}\), and pick \(w_0 \in \mathbb{N}\) such that for all \(w \ge w_0\) we have \(\mathcal{J}_w \subseteq \mathcal{J}\). Let \(\xi\) be the midpoint of \(\mathcal{I}\). By Fact \ref{thm:kronecker}, there are infinitely many \(w \in \mathbb{N}\) such that \(\fr(\xi+w\log_k(\ell)) \in \mathcal{J}\) and thus \(\mathcal{J} \subseteq \fr(\mathcal{I}+w\log_k(\ell))\). Hence, for infinitely many \(w \ge w_0\), the sets \(\fr(\mathcal{I}+w\log_k(\ell))\) and \(\mathcal{J}_w\) have non-empty intersection.

  So we have shown that there are infinitely many \(u_{1:m} \in (k^{\mathbb{N}})^m\) and \(v_{1:n} \in (\ell^{\mathbb{N}})^n\) satisfying the inequalities for \((i,j) \in I' \times J'\) and \(\psi^0\). As explained before, the other inequalities will be satisfied as well for sufficiently large \(v_1\), which finishes the proof.
\end{proof}

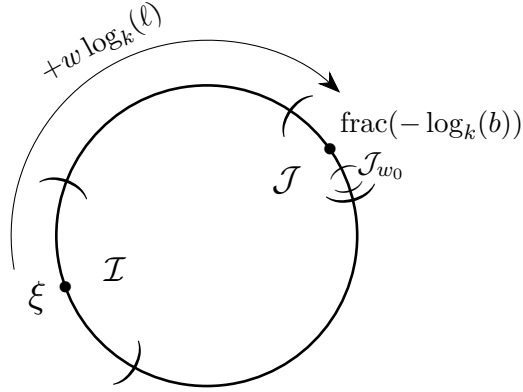
\begin{figure}[ht]
  \centering
  \begin{tikzpicture}[scale = 2]
    \draw[line width=1pt] (0,0) circle (1);

    \node at (35:1) {\(\bullet\)};
    \node[anchor=south west] at (35:1) {\(\fr(-\log_k(b))\)};
    \node[rotate=-35,scale=1.75] at (55:1) {$($};
    \node[rotate=-75,scale=1.75] at (15:1) {$)$};
    \node[scale = 1.2] at (35:.65) {$\mathcal{J}$};

    \node[rotate=-64,scale=1] at (26:1) {$($};
    \node[rotate=-72,scale=1] at (18:1) {$)$};
    \node[] at (22:1.25) {$\mathcal{J}_{w_0}$};

    \node at (200:1) {\(\bullet\)};
    \node[scale=1.3] at (200:1.2) {\(\xi\)};
    \node[rotate=150,scale=1.75] at (240:1) {$($};
    \node[rotate=70,scale=1.75] at (160:1) {$)$};
    \node[scale=1.2] at (200:.65) {$\mathcal{I}$};

    \draw[-{Stealth[scale=1.8]}] (190:1.3) arc (190:47:1.3) node[midway, above, sloped] {\(+ w \log_k(\ell)\)};
  \end{tikzpicture}
  \caption{Illustration of the last part of the proof of Lemma \ref{thm:algorithm}}
  \label{fig:circle} 
\end{figure}

\begin{proposition} \label{thm:inequ-and-congruence} 
  Let \((\mathcal{M}, A_k, A_{\ell}) \models T_{\forall }(\{k, \ell\})\), and let \(C \in \mathbb{Q}^{r \times (m+n)}\). Let \(\alpha_{1:m} \in (A_k \setminus \mathbb{Z})^m\) and \(\alpha_{1:n}' \in (A_{\ell} \setminus \mathbb{Z})^n\) such that \(C(\alpha_{1:m},\alpha'_{1:n}) >0 \), and let \(\Delta(x_{1:m},y_{1:n}) \subseteq d_{\mathcal{M}}(\alpha_{1:m},\alpha'_{1:n})\) be finite. Then there exist \(u_{1:m} \in (k^{\mathbb{N}})^m\) and \(v_{1:n} \in (\ell^{\mathbb{N}})^n\) satisfying \(C(x_{1:m},y_{1:n}) > 0\) and \(\Delta(x_{1:m},y_{1:n})\).
\end{proposition}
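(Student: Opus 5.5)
The plan is to reduce the congruence conditions to a substitution that turns the problem into a pure inequality problem, and then apply Lemma~\ref{thm:algorithm}.

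First I normalize \(\Delta\). By the Chinese remainder theorem, \(\Delta\) is implied by finitely many conditions of the form \(D_{d}(x_i - \kappa_i)\), \(D_d(y_j-\kappa_j')\) with a single modulus \(d\). I write \(d = d_k d_0\), where \(d_k\) collects the prime powers dividing a power of \(k\) and \(d_0\) is coprime to \(k\); I use the analogous splitting \(d=d_\ell d_0'\) for \(\ell\). By Lemma~\ref{thm:carmichael-use}, each \(\alpha_i \in A_k\setminus\mathbb{Z}\) satisfies \(D_{d_k}(x)\), and its residue modulo \(d_0\) equals \(k^{\rho_i} \bmod d_0\) for some \(\rho_i\in\{1,\dots,\lambda(d_0)\}\). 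The same holds for the \(\alpha'_j\), with exponents \(\rho'_j\) modulo \(\lambda(d_0')\). Hence it suffices to find \(u_i = k^{\rho_i+\lambda(d_0)a_i}\) and \(v_j=\ell^{\rho'_j+\lambda(d_0')b_j}\) with all exponents large, so that the \(d_k\)- and \(d_\ell\)-divisibility holds, and with \(C(u,v)>0\).

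The key step is to realize these restricted exponents by changing the base. I put \(K:=k^{\lambda(d_0)}\) and \(L':=\ell^{\lambda(d_0')}\), which are again multiplicatively independent. Next I want elements \(\beta_i\in A_K\) and \(\beta'_j\in A_{L'}\) in a structure satisfying the hypotheses of Lemma~\ref{thm:algorithm} for \(K,L'\), such that \(\alpha_i = k^{\rho_i}\beta_i\). In \(\mathcal{M}\), I define \(A_K := \{x\in A_k : D_{d_0}(x-1)\}\) together with the appropriate divisibility condition, and I must check the basic inequality axiom for \(K\). This check goes through Lemma~\ref{thm:rep-per-arch-class}: within an Archimedean class the elements of \(A_k\) form \(k^{\mathbb Z}\alpha\), and the congruence condition picks out the subsequence \(K^{\mathbb Z}\)-spaced. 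For \(\alpha_i\) I need \(\alpha_i/k^{\rho_i}\) to lie in \(\mathcal M\). Divisibility by \(k^{\rho_i}\) holds by \eqref{eq:car1}, \(\alpha_i/k^{\rho_i}\in A_k\) by \ref{ax:universal-multiplication}, and its residue mod \(d_0\) is \(1\). Alternatively, and more simply, I avoid division altogether by absorbing \(k^{\rho_i}\) into the columns of \(C\): I replace the variable \(x_i\) by \(k^{\rho_i}\tilde x_i\), and similarly for the \(y_j\). I then look for \(\tilde\alpha_i := \alpha_i / k^{\rho_i}\) as above with \(\tilde C(\tilde\alpha,\tilde\alpha')>0\).

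Then Lemma~\ref{thm:algorithm}, applied to \(K,L'\), to the modified matrix \(\tilde C\) and to the structure \((\mathcal M, A_K, A_{L'})\), yields infinitely many solutions \((\tilde u,\tilde v)\in (K^{\mathbb N})^m\times (L'^{\mathbb N})^n\). Setting \(u_i=k^{\rho_i}\tilde u_i\) and \(v_j=\ell^{\rho'_j}\tilde v_j\) gives the correct residues mod \(d_0,d_0'\). To ensure the \(d_k\)-divisibility, I additionally need all exponents large. For this I add the strict inequalities \(x_i > N x_{i'}\)? No — more simply, I add \(\tilde x_i - c\,\tilde x_{i'} > 0\) for suitable constants. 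A cleaner route is to multiply every variable by a large common \(K^{a}L'^{b}\)… but that is not uniform across the two bases, since homogeneity only allows scaling all variables by the same real factor. The option I would try first is to rerun Lemma~\ref{thm:algorithm}, replacing \(A_K\) by \(K^{\mu}A_K\); this set again satisfies the needed axioms as a subset, and the lemma then gives solutions in \(K^{\mu}K^{\mathbb N}\).

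The main obstacle is verifying that the auxiliary structure \((\mathcal M, A_K, A_{L'})\) satisfies \ref{ax:universal-multiplication} and \(\BInequ\) for the new bases. Closure \(x\in A_K\leftrightarrow Kx\in A_K\) needs the congruence type to be preserved under multiplication by \(K\), which holds since \(K\equiv1 \pmod{d_0}\), and \(1\in A_K\) is clear. For \(\BInequ\) I would argue via Lemma~\ref{thm:rep-per-arch-class} as described above.
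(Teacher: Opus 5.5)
Your strategy matches the paper's. You normalize $\Delta$ via Lemma~\ref{thm:carmichael-use}, pass to bases that are powers of $k$ and $\ell$ so that the congruences hold automatically, absorb the offsets into the columns of $C$, and apply Lemma~\ref{thm:algorithm} to an auxiliary structure. The problem is that your auxiliary structure does not satisfy the hypotheses of Lemma~\ref{thm:algorithm}. The set $A_K=\{x\in A_k : D_{d_0}(x-1)\}$ is spaced by $k^{\mathrm{ord}_{d_0}(k)}$, not by $K=k^{\lambda(d_0)}$, and the order can be a proper divisor of $\lambda(d_0)$. For $k=2$, $d_0=7$ we have $\lambda(7)=6$ but $2^3\equiv 1 \pmod 7$, so $8\in A_K$ lies strictly between $1$ and $K=64$, and $\BInequ(\{K,L'\})$ already fails inside $\mathbb{Z}$. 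Taking $K:=k^{\mathrm{ord}_{d_0}(k)}$ would repair the spacing, via Lemma~\ref{thm:rep-per-arch-class} and $\mathbb{Z}\cap d_0M=d_0\mathbb{Z}$. However, a congruence-defined $A_K$ still requires every witness to have a residue modulo all of $d_0$. A model of $T_\forall(L)$ contains only the universal half of the congruence axioms, so $\alpha_i$ is only guaranteed the residues that $\Delta$ actually prescribes for $x_i$. Your passage to a single modulus $d$ for all variables silently assumes more.

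The division step has the same defect. \eqref{eq:car1} only excludes the nonzero residues modulo $k^m$, so $\alpha_i$ need not lie in $kM$ at all. For instance, $M=\mathbb{Z}\oplus\mathbb{Z}\gamma$ ordered lexicographically, with $A_k=k^{\mathbb{N}}\cup k^{\mathbb{N}}\gamma$ and $A_\ell=\ell^{\mathbb{N}}$, is a model of $T_\forall(\{k,\ell\})$ with $\gamma\notin kM$. Your ``alternative'' still uses $\alpha_i/k^{\rho_i}$ as the witness, so it does not avoid the division. The largeness step is also left open, and the device you settle on fails: $K^{\mu}A_K$ does not contain $1$, so \ref{ax:universal-multiplication} fails for it, and in any case Lemma~\ref{thm:algorithm} returns points of $K^{\mathbb{N}}$ whatever set carries the witnesses.

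The paper avoids both the division and any congruence condition on the witnesses by multiplying instead of dividing. With $\rho$ a common multiple of the relevant $\lambda(d)$ exceeding the exponents $s_i$, it uses the witnesses $k^{\rho-s_i}\alpha_i$. It defines $A_{\tilde k}$ directly as $\tilde k^{\mathbb{N}}$ together with the $\tilde k^{\mathbb{Z}}$-orbits of these witnesses, and recovers $u_i=\tilde u_i/k^{\rho-s_i}$. The largeness this needs can be forced by adding one auxiliary $\tilde k$-variable $w$ witnessed by $1$, together with the constraints $x_i-\tilde k^{N}w>0$ and $y_j-\tilde k^{N}w>0$. These hold at the infinite witnesses and make every coordinate of a solution in powers exceed $\tilde k^{N}$.
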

\begin{proof}
  By the Carmichael axioms (Lemma \ref{thm:carmichael-use}), we can assume
  \[\Delta = \left\{ D_d(x_i - k^{s_i}), D_d(y_j - \ell^{t_j}) : d \in \mathcal{D}, i \in \{1, \dots, m\}, j \in \{1, \dots, n\}\right\} \]
  for some \(s_{1:m} \in \mathbb{N}^m\), \(t_{1:n} \in \mathbb{N}^n\), and some finite set of prime powers \(\mathcal{D}\). Pick \(\rho \in \mathbb{N}_{>0}\) that is divisible by \(\lambda(d)\) for all \(d \in \mathcal{D}\) and larger than \(s_i\) and \(t_j\) for all \(i \in \{1, \dots, m\}\), \(j \in \{1, \dots, n\}\). Now if \(d \in \mathcal{D}\) is coprime to \(k\), then \(k^{\rho} \equiv 1 \pmod{d}\), so \(D_d(x_i-k^{s_i})\) is equivalent to \(D_d(k^{\rho-s_i}x_i - 1)\). On the other hand, if \(d \in \mathcal{D}\) is not coprime to \(k\), then since \(d\) is a prime power, there is some \(e \in \mathbb{N}\) such that \(d \mid k^e\). Since \(\alpha_i \notin \mathbb{Z}\), the Carmichael axioms (Definition~\ref{def:carmichael-axioms}~\eqref{eq:car1}) tell us that \(\mathcal{M} \models D_{k^{e}}(\alpha_i)\), so in particular \(\mathcal{M} \models D_d(\alpha_i)\). Hence, \(k^{s_i}\) must be divisible by \(d\). Therefore, \(D_d(x_i-k^{s_i})\) is equivalent to \(D_d(x_i)\), which is satisfied for all sufficiently large \(x_i \in k^{\mathbb{N}}\). Using the same reasoning for \(\ell\), we obtain that if \(\tilde{u}_{1:m} \in (k^{\mathbb{N}})^m\) and \(\tilde{v}_{1:n} \in (\ell^{\mathbb{N}})^n\) are sufficiently large and satisfy
  \begin{align*}
    \widetilde{\Delta}(\tilde{x}_{1:m},\tilde{y}_{1:n}) := \quad &\left\{ D_d(\tilde{x}_i-1) : d \in \mathcal{D} \text{ coprime to } k, i \in \{1, \dots, m\}\right\} \\
    \cup &\left\{ D_d(\tilde{y}_j-1) : d \in \mathcal{D} \text{ coprime to } \ell, j \in \{1, \dots, n\}\right\},
  \end{align*}
  then \(u_{1:m} \in (k^{\mathbb{N}})^m\) and \(v_{1:n} \in (\ell^{\mathbb{N}})^n\) obtained through \(u_i = \tilde{u}_i/k^{\rho-s_i}\) and \(v_j = \tilde{v}_j / \ell^{\rho-t_j}\) satisfy \(\Delta(u_{1:m},v_{1:n})\).

  Let \(\widetilde{C} \in \mathbb{Q}^{r \times (m+n)}\) be the matrix obtained from \(C\) by multiplying the elements in the \(i\)-th column by \(k^{\rho-s_i}\) for \(i \in \{1, \dots, m\}\), and by multiplying the elements in the \((m+j)\)-th column by \(\ell^{\rho-t_j}\) for \(j \in \{1, \dots, n\}\). It suffices to find sufficiently large \(\tilde{u}_{1:m} \in (k^{\mathbb{N}})^m\) and \(\tilde{v}_{1:n} \in (\ell^{\mathbb{N}})^n\) satisfying \(\widetilde{C}(\tilde{u}_{1:m},\tilde{v}_{1:n}) > 0\) and \(\widetilde{\Delta}(\tilde{u}_{1:m},\tilde{v}_{1:n})\). To do this, we set \(\tilde{k} := k^{\rho}\) and \(\tilde{\ell} := \ell^{\rho}\). Since all elements of \((\tilde{k}^{\mathbb{N}})^m \times (\tilde{\ell}^{\mathbb{N}})^n\) satisfy \(\widetilde{\Delta}\), it suffices to find sufficiently large \((\tilde{x}_{1:m}, \tilde{y}_{1:n}) \in (\tilde{k}^{\mathbb{N}})^m \times (\tilde{\ell}^{\mathbb{N}})^n\) with \(\widetilde{C}(\tilde{x}_{1:m},\tilde{y}_{1:n}) > 0\). Define 
  \[A_{\tilde{k}} := \tilde{k}^{\mathbb{N}} \cup \bigcup_{i=1}^m (\tilde{k}^{\mathbb{Z}}\cdot k^{\rho-s_i}\alpha_i) \cap M \quad\text{ and }\quad A_{\tilde{\ell}} := \tilde{\ell}^{\mathbb{N}} \cup \bigcup_{j=1}^n (\tilde{\ell}^{\mathbb{Z}}\cdot \ell^{\rho-t_j}\alpha'_j) \cap M.\]
  Now \((\mathcal{M},A_{\tilde{k}},A_{\tilde{\ell}})\) satisfies \ref{ax:oag}, \ref{ax:universal-multiplication} (for \(L = \{\tilde{k}, \tilde{\ell}\}\)) and \(\BInequ(\{\tilde{k}, \tilde{\ell}\})\). Applying Lemma~\ref{thm:algorithm} finishes the proof.
\end{proof}

\subsection{Constructing an Embedding}

\begin{lemma} \label{thm:unique-rep}
  Let \((\mathcal{M},(A_{\ell})_{\ell\in L}) \models T_{\forall }(L)\), and let \(\alpha_1\), \dots, \(\alpha_n \in A \setminus \mathbb{Z}\) be such that for every \(\ell\in L\) and every Archimedean class \(K\) of \(\mathcal{M}\), there is at most one \(i\in \{1, \dots, n\}\) such that \(\alpha_i \in A_{\ell} \cap K\). Then \(\sum_{i=1}^n c_i\alpha_i \notin \mathbb{Z}\) for all \(c_1\), \dots, \(c_n \in \mathbb{Z} \setminus \{0\}\).
\end{lemma}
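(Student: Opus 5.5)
The plan is a proof by contradiction using the Mann axioms. Suppose \(\sum_{i=1}^n c_i\alpha_i = b \in \mathbb{Z}\) with all \(c_i \ne 0\).

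First I would pass to a non-degenerate relation. Choose a minimal non-empty \(I \subseteq \{1,\dots,n\}\) with \(\sum_{i\in I} c_i\alpha_i = b\); the set \(I\) is non-empty if \(b\ne 0\).

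\emph{Case \(b \ne 0\).} The tuple \((\alpha_i)_{i\in I}\) together with \(1 \in A_{\ell'}\) (by \ref{ax:universal-multiplication}) is a non-degenerate solution of \(\sum_{i\in I} c_i x_i = b\,x_{n+1}\).
\begin{itemize}
\item If the \(\alpha_i\) with \(i \in I\) do not all lie in the same \(A_\ell\), choose \(\ell'\) differently from some of them. The Mann axiom (first case of Definition~\ref{def:mann-axioms}) then forces each \(\alpha_i\) to equal a standard integer \(s_{ij}\). This contradicts \(\alpha_i \notin \mathbb{Z}\).
\item If they all lie in \(A_\ell\), take \(x_{n+1}=1\in A_\ell\). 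The second case of the Mann axiom gives \(t\alpha_i = s\cdot 1\) for some \(s,t\in\ell^{\mathbb{N}}\). In an ordered abelian group this gives \(\alpha_i = s/t\) as a rational element of \(\mathbb{Q}\cdot 1\).
\end{itemize}
To land in \(\mathbb{Z}\) and get a contradiction, I would use the universal congruence axioms together with the Carmichael/discreteness axioms. Alternatively, since \(\alpha_i \notin \mathbb{Z}\), the inequality axioms give \(\alpha_i \ge 1\) with \(t\alpha_i = s\). Then \(\alpha_i\) is a power-of-\(\ell\) multiple of \(1\) lying between consecutive powers, which \(\BInequ\) forbids unless \(\alpha_i \in \ell^{\mathbb{N}}\). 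Note that \(\BInequ\) follows from \(\Inequ\) in the case \(|L|\ge 3\).

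\emph{Case \(b = 0\), or after reducing.} Here the hypothesis on Archimedean classes must be used, and this is the main obstacle. Mann only controls ratios within a non-degenerate homogeneous relation. So I would argue by the ordering: let \(K\) be the largest Archimedean class among the \(\alpha_i\) (all positive by \(\BInequ\)/\(\Inequ\)). The elements in \(K\) contribute \(\sum_{\alpha_i\in K} c_i\alpha_i\). If this is not \(\ll\) elements of \(K\), the whole sum is in class \(K\) and so not an integer; note that integers lie in the class of \(1\), which is below \(K\) since \(\alpha_i\notin\mathbb{Z}\) and \(\alpha_i \gg 1\). The last point needs justification: \(\alpha_i \sim 1\) would force \(\alpha_i \in \ell^{\mathbb{Z}}\cdot 1\) by Lemma~\ref{thm:rep-per-arch-class}.

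So there must be cancellation inside \(K\): \(\sum_{\alpha_i\in K} c_i \alpha_i \ll \alpha_j\). By hypothesis, the \(\alpha_i\) in \(K\) belong to pairwise distinct \(A_\ell\)'s. Apply the Mann axioms to the relation \(\sum_{\alpha_i \in K} c_i\alpha_i = \gamma\), with \(\gamma\) the residual sum. If \(\gamma = 0\), a minimal vanishing subsum with distinct \(\ell\)'s is a non-degenerate homogeneous solution with \(b\) replaced by a suitable nonzero term. Moving one term to the right side, the first case of the Mann axioms forces standard integers, a contradiction. If \(\gamma\ne0\), I would instead use \(\Inequ\) (or Proposition~\ref{thm:inequ-and-congruence} for \(|L|\le 2\)) to transfer the strict inequalities \(|\sum c_i x_i| < x_j/N\) for all \(N\) to genuine powers \(\ell_i^{\mathbb{N}}\) with the \(x_i\) of comparable size. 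I would then use the Mann/Baker-type finiteness to contradict this. Making this transfer precise is the step I expect to be hardest.
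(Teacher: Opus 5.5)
\textbf{There is a genuine gap: the case \(b=0\) is left open, and the route you sketch for it cannot work.}

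Your case \(b\neq 0\) is fine. The minimal subsum is non-degenerate. Either the first Mann case makes the \(\alpha_i\) standard, or the second gives \(t\alpha_i=s\) with \(s,t\in\ell^{\mathbb{N}}\), and then \(\alpha_i\in\mathbb{Z}\) by \ref{ax:universal-congruence} or by discreteness.

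For \(b=0\), you isolate the top Archimedean class \(K\) and plan to transfer \(\bigl|\sum_{\alpha_i\in K}c_ix_i\bigr|<x_j/N\) to standard powers, then contradict it with a Baker-type bound. Any such transfer sees only finitely many \(N\) at a time. For each fixed \(N\), these inequalities do have solutions in powers of distinct bases: for instance \(|k^s-\ell^t|<\ell^t/N\) has infinitely many solutions by Lemma~\ref{thm:kronecker-dense}. Indeed, by compactness \(\Th(\mathcal{Z})\) has models with non-standard \(\alpha\in A_k\), \(\beta\in A_\ell\) and \(0<|\alpha-\beta|\ll\alpha\). So no contradiction can come from inequalities; the exact equation, including the lower-class terms, has to be used.

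The missing idea is that the second (same-base) case of the Mann axioms is already a statement about Archimedean classes, so no order analysis is needed. The argument runs as follows.
\begin{enumerate}
\item Take a minimal non-empty \(I\) with \(\sum_{i\in I}c_i\alpha_i=0\). Then \(|I|\ge 2\), since \(\alpha_i\neq 0\) and \(\mathcal{M}\) is torsion-free.
\item Fix \(i_0\in I\). By minimality, \((\alpha_i)_{i\in I\setminus\{i_0\}}\) together with \(\alpha_{i_0}\) is a non-degenerate solution of \(\sum_{i\in I\setminus\{i_0\}}c_ix_i=-c_{i_0}x_{n+1}\).
\item If the bases involved are not all equal, the first Mann case makes every \(\alpha_i\) a standard integer, which is absurd.
\item If they all equal some \(\ell\), the second case gives \(t\alpha_i=s\alpha_{i_0}\) with \(s,t\in\ell^{\mathbb{N}}\) for each \(i\in I\setminus\{i_0\}\). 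Elements of \(A_\ell\) are positive by the inequality axioms, so \(\alpha_i\) and \(\alpha_{i_0}\) are distinct-index elements of \(A_\ell\) in the same Archimedean class. This contradicts the hypothesis.
\end{enumerate}
This is essentially the paper's proof. The paper applies the Mann axioms directly to \(\sum_{i<n}c_i\alpha_i=-c_n\alpha_n\); passing to a minimal vanishing subsum, as above, is what justifies the non-degeneracy needed there.
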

\begin{proof}
  The claim is trivially true for \(n=1\), so let \(n \ge 2\) from now on. Let \(c_1\), \dots, \(c_n \in \mathbb{Z} \setminus \{0\}\). Assume, for the sake of contradiction, that \(\sum_{i=1}^n c_i\alpha_i = b \in \mathbb{Z}\). Now \(b \ne 0\) would immediately contradict the Mann axioms, so we have \(b= 0\) instead. This means we can apply the Mann axioms to \(\sum_{i=1}^{n-1} c_i\alpha_i = c_n\alpha_n\). This yields that \(\alpha_1\), \dots, \(\alpha_n \in A_{\ell}\) for some \(\ell \in L\), and that for each \(i\in\{1, \dots, n-1\}\) there are \(s\), \(t\in \ell^{\mathbb{N}}\) such that \(s\alpha_i = t\alpha_n\). In particular, we have \(s\alpha_1 = t\alpha_n\) for some \(s\), \(t\in \mathbb{Z}_{>0}\), which contradicts our assumption that \(\alpha_1\) and \(\alpha_n\) are not in the same Archimedean class.
\end{proof}

\begin{lemma} \label{thm:embed-powers}
  Let \((\mathcal{M}, (A_{\ell})_{\ell\in L}) \models T_{\forall }(L)\). Furthermore, let \(\kappa > |M|\) be a cardinal, and let \((\mathcal{N}, (B_{\ell})_{\ell\in L}) \models \Th(\mathcal{Z})\) be \(\kappa\)-saturated. Then there exists an \(\mathcal{L}_{<}\)-embedding \(f \colon \left\langle A \right\rangle\div \hookrightarrow \mathcal{N}\).
\end{lemma}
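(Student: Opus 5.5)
The plan is to use compactness plus saturation. First I will find a nice generating set for \(\langle A\rangle\div\). By Lemma~\ref{thm:rep-per-arch-class}, for each \(\ell\in L\) and each Archimedean class \(K\) meeting \(A_\ell\setminus\mathbb{Z}\), the set \(K\cap A_\ell\) lies in \(\ell^{\mathbb{Z}}\alpha_{K,\ell}\) for a fixed representative \(\alpha_{K,\ell}\). Let \(R\) be the set of all such representatives. Then
\[
\langle A\rangle\div = \langle 1, R\rangle\div .
\]
Lemma~\ref{thm:unique-rep} says that no nontrivial integer combination of distinct elements of \(R\) is an integer. In particular \(1\) together with \(R\) is \(\mathbb{Z}\)-linearly independent. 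Hence a homomorphism from \(\langle A\rangle\div\) into the torsion-free group \(N\) is determined by where \(1\) and \(R\) go, and it is injective as soon as the images are linearly independent. It is well defined on the divisible hull as soon as congruence types match, exactly as in the proof of Proposition~\ref{thm:back-and-forth-system}.

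I then write down a type \(p\) in variables \((y_\alpha)_{\alpha\in R}\) over \(\emptyset\) in \(\mathcal N\). Since \(|R|\le|M|<\kappa\), saturation applies once \(p\) is finitely satisfiable. The type \(p\) consists of four families of formulas.
\begin{enumerate}
\item \(U_\ell(y_\alpha)\) for \(\alpha\in A_\ell\).
\item The formulas \(D_d(y_\alpha - u)\) from \(d_{\mathcal M}\) of the tuple.
\item For every \(\mathbb{Z}\)-combination \(g=(z+\sum c_\alpha\alpha)/w\in\langle A\rangle\div\), the order conditions \(z+\sum c_\alpha y_\alpha >0\) or \(<0\) matching the sign of \(wg\).
\item For every such \(g\) and each \(\ell\), the formula \(U_\ell\bigl((z+\sum c_\alpha y_\alpha)/w\bigr)\) or its negation according to whether \(g\in A_\ell\). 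Here ``\(/w\)'' is expressed existentially.
\end{enumerate}
A realization \(b\) gives a map \(f\) with \(\alpha\mapsto b_\alpha\). It is well defined by (ii), together with the congruence axioms in \(\mathcal N\) and the fact that divisibility in \(\langle A\rangle\div\) is inherited from \(\mathcal M\). It is order preserving, hence injective, by (iii), and it preserves the \(U_\ell\) by (iv). So \(f\) is an \(\mathcal L_<\)-embedding.

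Finite satisfiability is the heart of the proof. Fix finitely many representatives \(\alpha_1,\dots,\alpha_m\in A_k\) and \(\alpha'_1,\dots,\alpha'_n\in A_\ell\); in general these come from several \(\ell\in L\). Fix finitely many conditions. I will find standard powers in \(\mathbb{Z}\) satisfying them, which suffices since \(\mathcal N\models\Th(\mathcal Z)\), so I work in \(\mathcal Z\) itself.

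For the order and congruence conditions I use \(\Inequ(L)\) directly when \(|L|\ge 3\). When \(|L|\le2\) I use Proposition~\ref{thm:inequ-and-congruence}. An inhomogeneous condition \(z+\sum c y>0\) can be made homogeneous: the \(\alpha\)'s are non-integral, hence infinitely large in absolute value (by \(\BInequ\) and discreteness they are positive and in infinite classes). So \(z+\sum c\alpha>0\) with \(\sum c\alpha\neq0\) is equivalent to \(\sum c\alpha>0\). The case \(\sum c\alpha=0\) is excluded by Lemma~\ref{thm:unique-rep}.

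The hard part is the negated \(U_\ell\) conditions in (iv), together with the requirement that the chosen standard powers again have no accidental relations. Positive instances in (iv) occur only when the combination is \(\ell^{j}\alpha\) or an integer, by the Mann axioms as in the proof of Theorem~\ref{thm:qe}; these are automatic by \ref{ax:universal-multiplication}. For the negative instances, I use the following approach.
\begin{itemize}
\item Add the strict inequalities forcing the chosen powers into ``separated'' position, for instance large ratios between different classes. These are available because the \(\alpha\)'s lie in distinct Archimedean classes or have incomparable bases, and they can be imposed with the same inequality machinery.
\item Use the Mann property in \(\mathbb Z\): the finitely many relevant equations \(z+\sum c_i x_i = w x_0\) with \(x_0\) a power have only finitely many non-degenerate solutions, up to scaling in the single-base case.
\item Use that \(\Inequ\) and Proposition~\ref{thm:inequ-and-congruence} actually yield infinitely many solutions, by Lemma~\ref{thm:algorithm}.
\end{itemize}
Then all but finitely many of the solutions avoid these exceptional tuples, so a generic choice satisfies the negated conditions. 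Checking that this finiteness argument is uniform is the step I expect to require the most care.
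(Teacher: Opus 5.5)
\textbf{There is a genuine gap.} It is in the step you flag yourself, and in one you treat as routine. You satisfy the finite parts of (iii) and (iv) directly by standard powers in $\mathcal Z$, but neither step is justified.

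For (iii), replacing $z+\sum c_\alpha y_\alpha>0$ by $\sum c_\alpha y_\alpha>0$ is legitimate in $\mathcal M$, where $\sum c_\alpha\alpha$ is infinite. It is not legitimate in $\mathcal Z$: a standard solution of $\sum c_\alpha u_\alpha>0$ can have $z+\sum c_\alpha u_\alpha\le 0$.

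For (iv), the claim that all but finitely many solutions avoid the exceptional tuples is false. Take nonstandard $\alpha_1\in A_2$, $\alpha_2\in A_3$ with $\alpha_2>\alpha_1$. The Mann axiom for $x_1-x_2=x_3$ in base $2$ gives $\alpha_1-1\notin A_2$, so $\neg U_2(y_1-1)$ is in your type. Yet every $(2,3^b)$ with $b\ge 1$ satisfies $y_2-y_1>0$ and violates it. Exceptional sets can pin down only some coordinates, and separation inequalities do not remove them. A repair would have to force every coordinate to be large, force same-base ratios to be large, and recurse through degenerate solutions. None of this is carried out.

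\textbf{The missing idea is that neither family belongs in the type.} Both hold automatically in $\mathcal N$, because $\mathcal N\models\Th(\mathcal Z)\supseteq T_\forall(L)$, so the Mann axioms hold there. The paper's type contains only three things:
\begin{enumerate}
\item the homogeneous inequalities $\sum c_\alpha y_\alpha>0$ true of the $\alpha$'s;
\item the formulas $U_\ell(y_\alpha)$;
\item the congruence type $d_{\mathcal M}$ of the tuple.
\end{enumerate}
Finite satisfiability of this type in $\mathcal Z$ is exactly what $\Inequ(L)$, respectively Proposition~\ref{thm:inequ-and-congruence}, provides.

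Now take a realization $(\beta_\alpha)$ in $\mathcal N$. The inequalities $y_\alpha-ny_{\alpha'}>0$ keep same-base $\beta$'s in distinct Archimedean classes. Hence Lemma~\ref{thm:unique-rep} applies in $\mathcal N$, and $\sum c_\alpha\beta_\alpha\notin\mathbb Z$ for nonzero coefficients. Discreteness \ref{ax:discrete} then upgrades $\sum c_\alpha\beta_\alpha>0$ to $\sum c_\alpha\beta_\alpha>\mathbb Z$, which gives your inhomogeneous conditions.

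For the negative $U_\ell$-instances, run your own Mann argument for the positive instances in $\mathcal N$ instead of $\mathcal M$. It shows that any $b\in B_k$ of the form $(g+\sum c_\alpha\beta_\alpha)/n$ is either an integer or satisfies $tb=s\beta_\alpha$ for some $s,t\in k^{\mathbb N}$. Injectivity and \ref{ax:universal-multiplication} pull this back to membership in $A_k$. So the negated $U_\ell$-conditions come for free, and no genericity argument in $\mathcal Z$ is needed.

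One point the paper leaves implicit: this argument needs the $\beta$'s to be nonstandard. You can impose that homogeneously. Add an auxiliary variable $y_0$ with $U_\ell(y_0)$, interpreted by $1$ in $\mathcal M$, together with the conditions $y_\alpha-Ny_0>0$. The same device, in the form $\sum c_\alpha y_\alpha-|z|y_0>0$, is the correct way to homogenize your (iii).
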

\begin{proof}
  For every \(\ell \in L\), let \((\alpha_i^{\ell})_{i\in I_\ell}\) be a family containing exactly one element of \(K \cap A_{\ell}\) for every Archimedean class \(K \ne \mathbb{Z}\) of \(\mathcal{M}\) with \(K \cap A_{\ell} \ne \emptyset \). By Lemma \ref{thm:rep-per-arch-class}, if \(K \ne \mathbb{Z}\) is an Archimedean class of \(\mathcal{M}\) and \(\alpha_i^{\ell}\in K\), then
  \[K \cap A_{\ell} \subseteq \ell^{\mathbb{Z}}\cdot \alpha_i^{\ell},\]
  which implies that
  \[\left\langle \{1\} \cup \{\alpha_i^{\ell} : \ell\in L, i\in I_{\ell}\} \right\rangle\div = \left\langle A \right\rangle\div.\]

  Let \(\overline{\alpha}\) be the tuple \((\alpha_i^{\ell} : \ell\in L, i\in I_{\ell})\), and let \(\overline{x}\) be the (possibly infinite) tuple of variables \((x_i^{\ell} : \ell\in L, i\in I_{\ell})\). Define \(p(\overline{x})\) as the type consisting of the formulas 
  \[\sum_{\ell\in L} \sum_{i\in I_\ell} c_i^\ell x_i^\ell >0,\]
  for all families of integers \((c_i^{\ell} : \ell \in L, i \in I_{\ell})\) such that \(c_i^{\ell} \ne 0\) for only finitely many pairs \((i,\ell)\), and 
  \[ \mathcal{M}\models \sum_{\ell\in L} \sum_{i\in I_{\ell}} c_i^{\ell}\alpha_i^{\ell} > 0.\]
  Furthermore, let
  \[q(\overline{x}) := p(\overline{x}) \cup \left\{U_\ell(x_i^{\ell}) : \ell\in L, i\in I_{\ell}\right\} \cup d_{\mathcal{M}}(\overline{\alpha}).\]
  We claim that every finite subset of \(q(\overline{x})\) is realized in \(\mathcal{Z}\). Indeed, this follows directly from the inequality axioms for \(|L| \ge 3\) and from Proposition \ref{thm:inequ-and-congruence} for \(|L| \le 2\). By saturation, there is a realization \(\overline{\beta} = (\beta_i^{\ell} : \ell\in L, i\in I_{\ell})\) of \(q(\overline{x})\) in \(\mathcal{N}\). Using this, we obtain our embedding \(f \colon \left\langle A \right\rangle\div \to \mathcal{N}\) by defining
  \begin{align*}
    f \left( \frac{g + \sum_{i, \ell} c_i^{\ell} \alpha_i^{\ell}}{n}\right) &:= \frac{g + \sum_{i, \ell} c_i^{\ell} \beta_i^{\ell}}{n}
  \end{align*}
  for \(g\in \mathbb{Z}\), \(n\in \mathbb{Z}_{>0}\) and \(c_i^{\ell} \in \mathbb{Z}\) with \(c_i^{\ell} \ne 0\) for only finitely many \(\ell\in L\), \(i\in I_{\ell}\).
  To make sure that this map is well-defined, we need to verify two things: First, we have \(d(\overline{\alpha}) \subseteq d(\overline{\beta})\), which means that if \(g + \sum c_i^{\ell} \alpha_i^{\ell}\) is divisible by \(n\), then so is \(g + \sum c_i^{\ell} \beta_i^{\ell}\). Secondly, every element \(a\in \left\langle A \right\rangle\div\) has a unique representation of the form \(a = \frac{1}{n}(g + \sum c_i^{\ell} \alpha_i^{\ell})\) up to scaling by an integer. Indeed, if
  \[\frac{g+ \sum_{i, \ell} c_i^{\ell} \alpha_i^{\ell}}{n} = \frac{h+\sum_{i, \ell} d_i^{\ell} \alpha_i^{\ell}}{m},\]
  then applying Lemma \ref{thm:unique-rep} to
  \[\sum_{i, \ell} \left(mc_i^{\ell}-nd_i^{\ell}\right)\alpha_i^{\ell} = nh-mg\in \mathbb{Z}\]
  tells us that \(mc_i^{\ell} = nd_i^{\ell}\) for all \(\ell\in L\), \(i\in I_{\ell}\). Thus, also \(nh = mg\), and hence
  \[\frac{g+ \sum_{i, \ell} c_i^{\ell} \beta_i^{\ell}}{n} = \frac{h+\sum_{i, \ell} d_i^{\ell} \beta_i^{\ell}}{m}.\]

  It is clear that \(f\) is a group homomorphism. We now check that \(f\) is strictly increasing. Suppose \(a = g + \sum_{i, \ell} c_i^{\ell} \alpha_i^{\ell} > 0\). If \(c_i^{\ell} = 0\) for all \(\ell\in L\), \(i\in I_{\ell}\), then \(f(a) = a >0\). Otherwise, we have \(\sum c_i^{\ell} \alpha_i^{\ell} \notin \mathbb{Z}\) by the Mann axioms (Lemma \ref{thm:unique-rep}). Now the discreteness axiom \ref{ax:discrete} implies that \(\sum c_i^{\ell} \alpha_i^{\ell} > \mathbb{Z}\) or \(\sum c_i^{\ell} \alpha_i^{\ell} < \mathbb{Z}\). The latter cannot be true, as it would imply \(a <0\). So in particular we have \(\sum c_i^{\ell} \alpha_i^{\ell} > 0\) and thus \(\sum c_i^{\ell} \beta_i^{\ell} > 0\) since \(\overline{\beta}\) satisfies \(p(\overline{x})\). Because \(\mathcal{N}\) also satisfies the Mann axioms and the discreteness axiom, we get \(\sum c_i^{\ell} \beta_i^{\ell} > \mathbb{Z} \) and thus \(f(a) = g + \sum c_i^{\ell} \beta_i^{\ell} > 0\), as desired.

  Finally, we verify that \(f\) is an \(\mathcal{L}\)-homomorphism. This proceeds similarly as in Proposition \ref{thm:back-and-forth-system}: Suppose
  \[a = \frac{g + \sum_{i, \ell} c_i^{\ell} \alpha_i^{\ell}}{n} \in A_k\]
  for some \(k \in L\). If \(\sum c_i^{\ell} \alpha_i^{\ell} = 0\), then \(f(a)=a \in B_k\). Now suppose \(\sum c_i^{\ell} \alpha_i^{\ell} \ne 0\) and thus \(a \notin \mathbb{Z}\) by the Mann axioms. Rearranging yields
  \[na - \sum_{i, \ell} c_i^{\ell} \alpha_i^{\ell} = g\in \mathbb{Z}\]
  and thus \(g=0\) by again invoking the Mann axioms. Applying the Mann axioms to \(\sum c_i^{\ell} \alpha_i^{\ell} = na\) shows that there exist \(j \in I_k\) and \(s\), \(t\in k^{\mathbb{N}}\) such that \(c_j^k \ne 0\) and \(s \cdot \alpha_j^k = t \cdot a\). Hence, \(f(a) = \frac{s_i}{t} \beta_i^k \in B_k\) by \ref{ax:universal-multiplication}, as desired. The other implication is proven analogously.
\end{proof}

\begin{lemma} \label{thm:avoiding-the-powers}
  Let \((\mathcal{M},(A_{\ell})_{\ell\in L}) \models T_{\forall }(L)\), let \(\kappa > |M|\) be a cardinal, and let \((\mathcal{N}, (B_{\ell})_{\ell \in L}) \models \Th(\mathcal{Z})\) be \(\kappa\)-saturated. Let \(M'\) be a divisibly closed subgroup of \(M\) containing \(1\), let \(f \colon M' \to \mathcal{N}\) be an \(\mathcal{L}_{<}\)-embedding, and let \(\alpha \in M \setminus M'\). Then there exists \(\beta\in N \setminus \left\langle f(M'), B \right\rangle\div \) such that \(d_{\mathcal{M}}(\alpha) \subseteq d_{\mathcal{N}}(\beta)\) and
  \begin{equation}
  \label{eq:same-cut}
  a < c\alpha < b \quad \text{implies} \quad f(a) < c\beta < f(b) \quad \text{for all } a,b\in M', c\in \mathbb{Z}_{>0}.
  \end{equation}
\end{lemma}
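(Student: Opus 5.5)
We realize the required properties as a type and use the \(\kappa\)-saturation of \(\mathcal{N}\). Let \(q(x)\) consist of three families of formulas over \(f(M')\). First, the positive congruence conditions \(d_{\mathcal{M}}(\alpha)\). Second, the cut conditions \(f(a) < c x < f(b)\) for all \(a,b\in M'\) and \(c\in\mathbb{Z}_{>0}\) with \(a<c\alpha<b\) in \(\mathcal{M}\). Third, as in Lemma~\ref{thm:congruence-outside-subgroup}, the formulas
\[\nexists y_1,\dots,y_n\Bigl(\bigwedge_{i=1}^n U_{\ell_i}(y_i)\wedge a_1y_1+\dots+a_ny_n = bx + f(g)\Bigr)\]
for all \(n\ge 1\), \(\ell_i\in L\), \(a_i,b\in\mathbb{Z}\setminus\{0\}\) and \(g\in M'\). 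The realizations of the third family are exactly the elements outside \(\langle f(M'),B\rangle\div\). Since \(|M'|<\kappa\), it suffices to show that \(q\) is finitely satisfiable in \(\mathcal{N}\).

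Fix finitely many formulas from \(q\). By the Chinese remainder theorem and \ref{ax:universal-congruence}, the finitely many congruence conditions amount to \(x\equiv r \pmod d\) for a single modulus \(d\), where \(\alpha\equiv r\pmod d\) in \(\mathcal{M}\). The finitely many cut conditions become a single condition \(\max_j f(a_j)/c_j < x < \min_j f(b_j)/c_j\), read in the divisible hull of \(\mathcal{N}\). We clear denominators by multiplying through by \(c:=\prod c_j\). In \(\mathcal{M}\) this interval contains \(\alpha\), and \(\alpha\notin M'\), so it is nonempty. We also need it to be large: it should contain infinitely many elements congruent to \(r\bmod d\), or more precisely elements escaping the finitely many linear constraints from the third family.

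\textbf{Key step.} We claim that if \(a<c\alpha<b\) with \(a,b\in M'\), then \(b-a\) is infinite, i.e.\ \(b-a>\mathbb{Z}\). Suppose instead that \(b-a\) were finite. Then \(c\alpha - a\in\mathbb{Z}\), so \(c\alpha\in M'\), and divisible closedness of \(M'\) gives \(\alpha\in M'\), a contradiction. Thus \(c\alpha\) lies in an infinite gap of \(M'\): both \(c\alpha-a\) and \(b-c\alpha\) are infinite. Since \(f\) is an order embedding fixing \(\mathbb{Z}\), the interval \((f(a),f(b))\) in \(\mathcal{N}\) also has infinite length. Combining finitely many cuts keeps this true, because the minimum of the upper bounds minus the maximum of the lower bounds is still a difference \(b_j-a_i\) of the above form, after scaling by \(c\).

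It remains to find, inside such an infinitely long interval \(\mathcal{I}\) of \(\mathcal{N}\), an element with the prescribed residue mod \(d\) that avoids finitely many sets \(\{y : by+f(g)\in \langle B\rangle_{a,\ell}\}\). This is the main obstacle, because smallness (Proposition~\ref{thm:mann-small-general}) says these finitely many sets do not cover \(N\), but not that they fail to cover a given interval. I would handle it as follows. The interval is definable and, being infinitely long, it contains a translate of arbitrarily long standard segments \(\{e,e+1,\dots,e+K\}\) for every \(K\). Now \(\mathcal{N}\equiv\mathcal{Z}\). In \(\mathcal{Z}\), the union \(X\) of finitely many sets of the form \(\{y : by + h \in a_1\ell_1^{\mathbb{N}}+\dots+a_n\ell_n^{\mathbb{N}}\}\) has gaps of unbounded length within every residue class, since sums of \(n\) powers have counting function \(O((\log N)^n)\). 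We express this by a sentence: for every \(K\) there is \(N_0\) such that every interval of length \(N_0\) contains an element \(\equiv r \pmod d\) outside \(X\). The parameters \(h=f(g)\) must be quantified universally in this sentence, which is fine because the bound is uniform in \(h\). Transferring the sentence to \(\mathcal{N}\) and applying it to \(\mathcal{I}\), which contains standard-length segments, yields the required element. If the uniformity in \(h\) proves delicate, I would fall back on a counting argument in \(\mathcal{Z}\) over intervals \([e,e+N]\): the number of hits is \(O((\log N)^n)\), uniformly in the shift, because sums of powers in a window of length \(N\) still have only polylogarithmically many configurations. Finally, compactness and saturation give \(\beta\), and \eqref{eq:same-cut} holds by construction.
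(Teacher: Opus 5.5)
\paragraph{A genuine gap at the step you flag as the main obstacle.} Everything up to your key step is sound and follows the paper's structure: the type over $f(M')$, the reduction of finitely many congruences to one residue $r \bmod d$, and the argument that $f(b)-f(a)>\mathbb{Z}$ because $M'$ is divisibly closed and contains $1$.

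The gap is in the covering step. What you need from $\mathcal{Z}$ is an $N_0$ such that \emph{no} shift $X_h$ contains all elements $\equiv r \pmod d$ of \emph{any} window of length $N_0$. Since $h$ is arbitrary, this is a uniform statement about windows of $P=a_1\ell_1^{\mathbb{N}}+\dots+a_n\ell_n^{\mathbb{N}}$ at arbitrary positions. It would follow from $P$ having upper Banach density $0$.

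Your main justification is the global bound $\#(P\cap[-N,N])=O((\log N)^n)$. This does not give the uniform statement, because a set with polylogarithmic counting function can still contain arbitrarily long intervals far from $0$. So it does not prove the sentence you transfer to $\mathcal{N}$.

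Your fallback is that a window of length $N$ meets only polylogarithmically many elements of $P$ wherever it sits. This is only asserted, and it is not routine. With mixed signs and multiplicatively independent bases, large powers can nearly cancel (e.g.\ $3^{12}-2^{19}=7153$). So there is no digit-counting reason why the number of configurations landing in $[e,e+N]$ should depend on $N$ alone. Any such bound needs real Diophantine input, such as linear forms in logarithms or uniform $S$-unit bounds. As written, the central step is unproved.

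\paragraph{How to close it.} The missing idea is that the \emph{proof} of smallness already controls a single coset of $\mathbb{Z}$ inside $\mathcal{N}$, so no counting in $\mathcal{Z}$ is needed.
\begin{enumerate}
  \item Since $f(b)-f(a)>\mathbb{Z}$, choose $\gamma\equiv r\pmod d$ with $c(\gamma+d\mathbb{Z})\subseteq(f(a),f(b))$.
  \item Suppose every $\gamma+dk$ ($k\in\mathbb{Z}$) lay in $X_1\cup\dots\cup X_p$. Take a witness $\sum_i a_{ji}y_i=b_j(\gamma+dk)+f(g_j)$ and split off the $y_i\in\mathbb{Z}$.
  \item By Lemma~\ref{thm:at-most-one}, which applies in $\mathcal{N}$ thanks to the Mann axioms, the nonstandard part is the unique element $s_j$ of $\langle B\setminus\mathbb{Z}\rangle\div\cap(b_j\gamma+f(g_j)+\mathbb{Z})$. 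In particular it does not depend on $k$.
  \item Hence $\mathbb{Z}$ is covered by finitely many sets $\{k\in\mathbb{Z}:\exists x\in\prod_{i\in J}\ell_{ji}^{\mathbb{N}}\colon \sum_{i\in J}a_{ji}x_i=b_jd\,k+c_j\}$, where $c_j=b_j\gamma+f(g_j)-s_j\in\mathbb{Z}$.
  \item So the finite union of the relevant $\ell^{\mathbb{N}}$ would be large in $(\mathbb{Z},+,-,0,1)$, contradicting Corollary~\ref{thm:mann-small-z}.
\end{enumerate}
This is exactly the argument of Proposition~\ref{thm:mann-small-general}, run on $\gamma+d\mathbb{Z}$ instead of $\mathbb{Z}$. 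It gives your element directly in $\mathcal{N}$, and by compactness it would also prove your uniform sentence in $\mathcal{Z}$.

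\paragraph{Comparison with the paper.} The paper's type contains only the one-power conditions $\neg U_\ell\bigl((f(g)+kx)/n\bigr)$, which is all Theorem~\ref{thm:universal} uses. It then argues that, within one coset $\beta'+\mathbb{Z}$ inside the interval, each such condition excludes at most one element. Your third family aims at the literally stated conclusion $\beta\notin\langle f(M'),B\rangle\div$. That stronger target is why you need the coset form of smallness.
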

\begin{proof}
  Let \(p(x)\) be the type consisting of the formulas
  \[f(a) < c \cdot x < f(b)\]
  for all \(a\), \(b \in M'\), \(c \in \mathbb{Z}_{>0}\) such that \(\mathcal{M} \models a < c \alpha < b\). Furthermore, let \(q(x)\) be the type consisting of the formulas
  \[\neg U_\ell \left(\frac{f(g)+k \cdot x}{n} \right)\]
  for all \(\ell \in L\), \(g \in M'\), and \(k\), \(n \in \mathbb{Z} \setminus \{0\}\) such that \(\mathcal{M} \models D_n(g+k\alpha)\).
  Our desired \(\beta\) would be a realization of the type \(d_{\mathcal{M}}(\alpha) \cup p(x) \cup q(x)\).
  By saturation of \(\mathcal{N}\), it suffices to show that this type is finitely satisfiable in \(\mathcal{N}\). Let \(\Delta \subseteq d_{\mathcal{M}}(\alpha)\) be finite, \(a_i\), \(b_i \in M'\) and \(c_i\in \mathbb{Z}_{>0}\) with
  \[\mathcal{M} \models a_i < c_i\alpha < b_i\]
  for \(i \in \{1, \dots, m\}\), and let \(g_i\in M'\), \(k_i\), \(n_i \in \mathbb{Z} \setminus \{0\}\) such that
  \[\mathcal{M} \models D_{n_i}(g_i+k_i\alpha)\]
  for \(i\in \{1, \dots, r\}\).
  Set \(c := c_1 \cdots c_m\) and \(a := \max_i \frac{c}{c_i} a_i\), \(b := \min_i \frac{c}{c_i} b_i\). Then \(f(a_i) < c_i x < f(b_i)\) for all \(i\in \{1, \dots, m\}\) is equivalent to \(f(a) < cx < f(b)\). Since \(M'\) is divisibly closed and contains \(1\), we have \(c\alpha + k \notin M'\) for every \(k\in \mathbb{Z}\). This implies \(b-a > \mathbb{Z}\) which means \(f(b)-f(a) > \mathbb{Z}\) in \(\mathcal{N}\). So there exists \(\beta'\in N\) such that \(f(a) < c(\beta' + \mathbb{Z}) < f(b)\). Since \(\mathcal{N}\) is a model of Presburger arithmetic, \(\beta'+\mathbb{Z}\) contains infinitely many realizations of \(\Delta\). Furthermore, for each \(i\in \{1, \dots, r\}\), there is at most one \(\beta\in \beta' + \mathbb{Z}\) such that
  \[\frac{f(g_i)+k_i \beta}{n_i}\in B,\]
  as the existence of two distinct infinitely large elements of \(B\) with finite difference would contradict the Mann axioms. This means we can pick \(\beta \in c\beta' + \mathbb{Z}\) in such a way that \(\beta\) both realizes \(\Delta\) and satisfies \(\frac{1}{n_i} (f(g_i) + k_i \beta) \notin B\) for every \(i\in \{1, \dots, r\}\), which concludes the proof.
\end{proof}

\begin{theorem} \label{thm:universal}
  The theory \(T_{\forall }(L)\) axiomatizes the universal theory of \(\mathcal{Z}\).
\end{theorem}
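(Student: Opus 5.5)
The plan is to use the standard criterion already announced after Definition \ref{def:theory-with-order}. Since all axioms of \(T_\forall(L)\) are universal and hold in \(\mathcal{Z}\), it suffices to show the following: every \(\mathcal{M} \models T_\forall(L)\) admits an \(\mathcal{L}_<\)-embedding into some \(\mathcal{N} \models \Th(\mathcal{Z})\). Indeed, a universal sentence true in \(\mathcal{Z}\) then holds in \(\mathcal{N}\) and, being universal, also holds in the substructure \(\mathcal{M}\). So fix \(\mathcal{M}\), a cardinal \(\kappa > |M|\), and a \(\kappa\)-saturated \(\mathcal{N} \models \Th(\mathcal{Z})\).

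The first step is to invoke Lemma \ref{thm:embed-powers}, which gives an \(\mathcal{L}_<\)-embedding \(f_0 \colon \langle A\rangle\div \hookrightarrow \mathcal{N}\). Then extend by transfinite recursion through an enumeration of \(M\). At each stage we have a divisibly closed subgroup \(M' \supseteq \langle A\rangle\div\) containing \(1\) and an embedding \(f \colon M' \to \mathcal{N}\). Given \(\alpha \in M\setminus M'\), apply Lemma \ref{thm:avoiding-the-powers} to obtain \(\beta \in N \setminus \langle f(M'), B\rangle\div\) with \(d_{\mathcal{M}}(\alpha) \subseteq d_{\mathcal{N}}(\beta)\) and the cut condition \eqref{eq:same-cut}. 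Define
\[F\Bigl(\tfrac{g+k\alpha}{n}\Bigr) := \tfrac{f(g)+k\beta}{n}\]
on \(\langle M',\alpha\rangle\div\). At limit stages take unions; since \(|M|<\kappa\), saturation is available at every stage.

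The verifications are as follows.
\begin{itemize}
\item Well-definedness: if \(n \mid g+k\alpha\) in \(\mathcal{M}\), we need \(n\) to divide \(f(g)+k\beta\). Reduce to prime powers and use that \(f\) preserves congruence types, together with \(d_{\mathcal{M}}(\alpha)\subseteq d_{\mathcal{N}}(\beta)\). Here the inclusion rather than equality suffices, since we only need one direction.
\item Injectivity: this holds because \(\beta \notin f(M')\div\), exactly as in Proposition \ref{thm:back-and-forth-system}.
\item Order preservation: if \(g+k\alpha>0\) with \(k>0\), then \(-g<k\alpha\). The cut condition \eqref{eq:same-cut} requires an upper bound as well, so pick any \(b\in M'\) above \(k\alpha\) if one exists, and otherwise use the analogous one-sided version of the type. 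Then \(f(-g)<k\beta\). The case \(k<0\) is symmetric.
\item Preservation of \(U_\ell\) in the forward direction: if \(a=\frac{g+k\alpha}{n}\in A_\ell\) with \(k\ne0\), then \(a\in\langle A\rangle\div\subseteq M'\). Hence \(\alpha\in M'\), which is a contradiction. So \(k=0\) and we use \(f\).
\item Preservation of \(U_\ell\) in the reverse direction: if \(F(a)\in B_\ell\) with \(k\neq 0\), then \(\beta\in\langle f(M'),B\rangle\div\), contradicting the choice of \(\beta\).
\end{itemize}

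The main obstacle is the cut condition when \(\alpha\) is unbounded relative to \(M'\). Lemma \ref{thm:avoiding-the-powers} is stated with two-sided bounds, so I would check that its proof handles one-sided cuts. It should: the type \(p(x)\) only contains formulas for pairs that actually exist, and if \(\alpha > M'\) one uses instead the formulas \(f(a) < cx\) for all \(a\in M'\). Finite satisfiability then still holds, since \(\mathcal{N}\) has elements above any finite set and the Mann-based argument excluding \(B\) is unchanged. With this, the union of the chain is the required embedding \(\mathcal{M}\hookrightarrow\mathcal{N}\), which completes the argument.
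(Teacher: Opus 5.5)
Your outline is the paper's proof: Lemma~\ref{thm:embed-powers} for $\langle A\rangle\div$, then one-element extensions via Lemma~\ref{thm:avoiding-the-powers}, with the same arguments for injectivity and for $U_\ell$. Your remark that \eqref{eq:same-cut} must also cover one-sided cuts is correct; otherwise it is vacuous when, say, $\alpha>M'$.

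\textbf{The gap is well-definedness.} In the extension step you justify it by $d_{\mathcal M}(\alpha)\subseteq d_{\mathcal N}(\beta)$. Lemma~\ref{thm:embed-powers}, which you use as a black box, justifies it by the coordinatewise inclusion $d_{\mathcal M}(\overline\alpha)\subseteq d_{\mathcal N}(\overline\beta)$. Both inferences assume that every element involved is congruent to an integer modulo each $p^m$. A model of $T_\forall(L)$ satisfies only \ref{ax:universal-congruence}, so divisibility of $g+\sum_i c_i\alpha_i$ is not determined by these types.

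In the extension step this is repairable. Let $\beta$ realize all $D_n(f(g)+kx)$ with $g\in M'$ and $\mathcal M\models D_n(g+k\alpha)$. By divisible closure of $M'$ and the Chinese remainder theorem, finitely many of these reduce to a single condition $D_N(x-h)$ with $h\in N$, and infinitely many elements of $\beta'+\mathbb Z$ satisfy it. For $\langle A\rangle\div$ no such repair exists: \emph{as $T_\forall(L)$ is defined, the statement is false.}

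Here is a counterexample for any $L\ni 2$.
\begin{itemize}
\item Let $M$ be free abelian on $1,\alpha_1,\gamma$, and put $\alpha_2:=7\gamma-\alpha_1$. Order $M\subseteq\mathbb Q\alpha_1\oplus\mathbb Q\alpha_2\oplus\mathbb Q$ lexicographically, so $\alpha_1\gg\alpha_2\gg1$ and $1$ is the least positive element.
\item Let $A_2:=2^{\mathbb N}\cup2^{\mathbb N}\alpha_1\cup2^{\mathbb N}\alpha_2$ and $A_\ell:=\ell^{\mathbb N}$ for $\ell\neq2$.
\item Divisibility is coordinatewise in the basis $1,\alpha_1,\gamma$. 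Hence \ref{ax:universal-congruence} and \ref{ax:universal-multiplication} hold. Moreover $2^j\alpha_i$ is congruent modulo $n\ge2$ to an integer only if $n\mid 2^j$, and then only to $0$; this gives $\Carmichael(L)$.
\item Since $1,\alpha_1,\alpha_2$ are $\mathbb Q$-linearly independent, every non-degenerate solution of a Mann equation lies entirely in $\mathbb Z$, in $2^{\mathbb N}\alpha_1$, or in $2^{\mathbb N}\alpha_2$. So $\Mann(L)$ reduces to its truth in $\mathcal Z$, because the one-$\ell$ Mann axioms are invariant under rescaling.
\item Each Archimedean class meets $A_2$ in one of $2^{\mathbb N}$, $2^{\mathbb N}\alpha_1$, $2^{\mathbb N}\alpha_2$, so $\BInequ(L)$ holds. $\Inequ(L)$ also holds: replacing $2^j\alpha_1$, $2^j\alpha_2$ by $2^{j+a}$, $2^{j+b}$ with $a\gg b\gg0$ preserves finitely many strict homogeneous inequalities and congruences $D_d(x_i-k_i)$.
\end{itemize}
Thus $\mathcal M\models T_\forall(L)$. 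But $\alpha_1+\alpha_2=7\gamma$, while the universal sentence $\forall x\,\forall y\,\forall z\,\bigl(U_2(x)\wedge U_2(y)\rightarrow 7\cdot z\neq x+y\bigr)$ holds in $\mathcal Z$. Indeed, powers of $2$ are $1$, $2$ or $4$ modulo $7$, and no two of these sum to $0$ modulo $7$.

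So $\mathcal M$ embeds into no model of $\Th(\mathcal Z)$; concretely, $\gamma\in\langle A\rangle\div$ has no admissible image in Lemma~\ref{thm:embed-powers}. Any correct version of the theorem needs further universal axioms. At the very least, these must exclude congruences $D_d(\sum_i c_ix_i-k)$ on tuples of powers whenever $\mathcal Z$ excludes them.
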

\begin{proof}
  Let \((\mathcal{M}, (A_\ell)_{\ell\in L}) \models T_{\forall }(L)\). It suffices to show that \(\mathcal{M}\) embeds in a model of \(\Th(\mathcal{Z})\). For this, we pick a cardinal \(\kappa > |M|\) and a \(\kappa\)-saturated \((\mathcal{N}, (B_\ell)_{\ell\in L}) \models \Th(\mathcal{Z})\). In Lemma~\ref{thm:embed-powers}, we have constructed an embedding \(f \colon \left\langle A \right\rangle \div \hookrightarrow \mathcal{N}\). We now inductively extend this to an embedding of all \linebreak[2]of \(\mathcal{M}\).

  Suppose we have an \(\mathcal{L}_{<}\)-embedding \(f \colon M' \hookrightarrow \mathcal{N}\) for a divisibly closed subgroup \(M' < M\) with \(\left\langle A \right\rangle\div \subseteq M'\), and suppose that \(\alpha \in M \setminus M'\). Let \(\beta \in N\) be as in Lemma~\ref{thm:avoiding-the-powers}. With that, we extend the embedding by
  \begin{align*}
    F\colon \left\langle M', \alpha \right\rangle\div  &\to \mathcal{N}\\
    \frac{g + k\alpha}{n}  &\mapsto \frac{f(g)+k\beta}{n}.
  \end{align*}
  This map is well-defined since \(d_{\mathcal{M}}(\alpha) \subseteq d_{\mathcal{N}}(\beta)\). It is straightforward to verify that \(F\) is a group homomorphism. Property \eqref{eq:same-cut} ensures that \(F\) is strictly increasing. It remains to show that \(F\) is an \(\mathcal{L}\)-homomorphism. Let \(a\in \left\langle M', \alpha \right\rangle\div\) and \(\ell\in L\). If \(a \in A_\ell\), then \(a \in M'\) since \(A \subseteq M'\), so \(F(a) = f(a) \in B_{\ell}\). Furthermore, if \(F(a) \in B_{\ell}\), then \(F(a) \in f(M')\) since \(\beta \notin \left\langle f(M'), B \right\rangle\div\). Hence, by the injectivity of \(F\), we have \(a \in M'\) and thus \(F(a) = f(a) \in B_{\ell}\) which implies \(a \in A_{\ell}\). So \(F\) is indeed an \(\mathcal{L}_{<}\)-embedding.
\end{proof}

\bibliographystyle{amsplain}
\bibliography{references}


\end{document}